\theoremstyle{plain}
\newtheorem{theorem}{Theorem}[section]
\newtheorem{lemma}[theorem]{Lemma}
\newtheorem{proposition}[theorem]{Proposition}
\newtheorem{corollary}[theorem]{Corollary}
\theoremstyle{definition}
\newtheorem{defn}[theorem]{Definition}
\newtheorem{ex}[theorem]{Example}
\theoremstyle{remark}
\newtheorem{rem}[theorem]{Remark}
\newcommand{\cD}{{\mathcal D}}
\newcommand{\cF}{{\mathcal F}}
\newcommand{\cG}{{\mathcal G}}
\newcommand{\cN}{{\mathcal N}}
\newcommand{\cP}{{\mathcal P}}
\newcommand{\cT}{{\mathcal T}}
\newcommand{\cU}{{\mathcal U}}
\newcommand{\cX}{{\mathcal X}}
\newcommand{\cZ}{{\mathcal Z}}
\newcommand{\F}{{\mathbb{F}}}
\newcommand{\N}{{\mathbb{N}}}
\newcommand{\Z}{{\mathbb{Z}}}
\newcommand{\R}{{\mathbb{R}}}
\newcommand{\mvmap}{\rightrightarrows}
\newcommand{\diam}{\mathop{\mathrm{diam}}\nolimits}
\newcommand{\walk}{{\rightsquigarrow}}
\newcommand{\Int}{\mathop{\mathrm{int}}\nolimits} 
\newcommand{\cl}{\mathop{\mathrm{cl}}\nolimits}
\newcommand{\bdy}{\mathop{\mathrm{bdy}}\nolimits}
\newcommand{\Inv}{\mathop{\mathrm{Inv}}\nolimits}
\newcommand{\Con}{\mathop{\mathrm{Con}_*}\nolimits}
\newcommand{\sA}{{\mathsf{ A}}}
\newcommand{\sJ}{{\mathsf{ J}}}
\newcommand{\sK}{{\mathsf{ K}}}
\newcommand{\sN}{{\mathsf{ N}}}
\newcommand{\sP}{{\mathsf{ P}}}
\newcommand{\sAtt}{{\mathsf{ Att}}}
\newcommand{\sABlock}{{\mathsf{ ABlock}}}
\newcommand{\sInvset}{{\mathsf{ Invset}}}
\newcommand{\sMG}{{\mathsf{ MG}}}
\newcommand{\sMR}{{\mathsf{ MR}}}
\newcommand{\sMT}{{\mathsf{ MT}}}
\newcommand{\sMTile}{{\mathsf{ MTile}}}
\newcommand{\sSC}{{\mathsf{ SCC}}}
\newcommand{\sCon}{{\mathsf{Con}}}
\newcommand{\bzero}{{\mathbf{0}}}
\newcommand{\bone}{{\mathbf{1}}}
\newcommand{\setof}[1]{\left\{ {#1}\right\}}
\newcommand{\setdef}[2]{\left\{{#1}\,\left|\,{#2}\right.\right\}}
\newcommand{\btheta}{{\bar{\theta}}}
\title{Rigorously Characterizing Dynamics with Machine Learning}
\author[Marcio Gameiro, Brittany Gelb, and Konstantin Mischaikow]{Marcio Gameiro\textsuperscript{1}, Brittany Gelb\textsuperscript{1}, and
Konstantin~Mischaikow\textsuperscript{1}
}
\address{\textsuperscript{1} Department of Mathematics, Rutgers, The State University of New Jersey, 110 Frelinghusen Rd., Piscataway, NJ 08854-8019, USA}
\begin{document}

\begin{abstract}
The identification of dynamics from time series data is a problem of general interest.
It is well established that dynamics on the level of invariant sets, the primary objects of interest in the classical theory of dynamical systems, is not computable.
We recall a coarser characterization of dynamics based on order theory and algebraic topology and prove that this characterization can be identified using approximations.
\end{abstract}

\maketitle
\section{Introduction}

The goal of this paper is to present a framework that
allows for the use of machine learning to rigorously characterize
nonlinear dynamics generated by iterates of a continuous function $f$ defined on a compact subset $X$ of $\R^d$.
The fundamental assumption is that given sufficient data and given $\rho > 0$ it is possible to learn a function $G\colon \R^d\to \R^d$ that satisfies the following three properties.
\begin{description}
    \item[P1] $\sup_{x\in X}\| f(x) - G(x)\| < \rho$.
    \item[P2] Given $x\in X$ we can evaluate $G(x)$.
    \item[P3] We can determine explicit Lipschitz bounds on $G$.
\end{description}
Our strategy is to use $G$ to characterize the dynamics of $f$.

The  classical theory of dynamical systems focuses on understanding the existence and structure of \emph{invariant sets}, i.e., sets of the form $S\subset X$ such that $f(S) = S$. 
This leads to a rich mathematical theory but, as we argue below, a theory that is too rich from the perspective of deducing dynamics from data.

With regard to the goals of this paper, the fundamental question is: is there a $\rho$ such that $G$ (satisfying {\bf P1} - {\bf P3}) generates the same dynamics as $f$?
In the classical setting this is equivalent to asking: can we guarantee that $f$ and $G$ are \emph{conjugate},  i.e., does there exist a homeomorphism $h\colon X\to X$ such that
\[
f \circ h = h \circ G?
\]
The work of \cite{foreman:rudolph:weiss} proves that in general this is not possible via computations.
More precisely, regarding their work the authors state
\begin{quote}
    “This can be interpreted as saying that there is no method or protocol that involves a countable amount of information and countable number of steps that reliably distinguishes between nonisomorphic ergodic measure preserving transformations. We view this as a rigorous way of saying that the classification problem for ergodic measure preserving transformations is intractable.’’
\end{quote}

An alternative approach is to focus on individual invariant sets.
However,  as shown by the following example, even this is challenging.

\begin{ex}
\label{ex:bifurcation}
A trivial example that demonstrates the challenge of identifying  invariant sets based on approximations is as follows. 
Consider $f_\theta\colon \R \to \R$ defined by
\[
f_\theta(x) = \begin{cases}
    0 & \text{if $x\leq 1/2$}\\
    2 x - 1 & \text{if $1/2\leq x\leq (\theta +1)/2$} \\
    \theta & \text{if $x\geq (\theta +1)/2$} 
\end{cases}
\]
and shown in Figure~\ref{fig:ftheta}.
Observe that if $\theta <1$, then $f_\theta$ has one fixed point, $x=0$; if $\theta >1$, then $f_\theta$ has three fixed points, $x=0$, $x =1$, and $x= \theta$; and  $f_1$ has two fixed points, $x=0$ and $x=1$.

\begin{figure}[!htb]
\centering
\subfloat[]{\begin{tikzpicture}[scale=0.6]
\draw[->] (0,0) -- (4,0)  node[right]{$x$};
\draw[->] (0,0) -- (0,4)  node[above]{$y$};
\draw (0,0) -- (4,4)  node[above]{$y=x$};
\draw (1,-0.1) -- (1,0.1) node[below]{$\frac{1}{2}$};
\draw (2,-0.1) -- (2,0.1) node[below]{$1$};
\draw (-0.1,1.5) -- (0.1,1.5)  node[left]{$\theta$};
\draw[blue, thick] (0,0) -- (1,0) -- (1.75,1.5) -- (4,1.5) node[above]{$y=f_\theta(x)$};
\draw[blue, fill=blue] (0,0) circle (0.5ex);
\end{tikzpicture}}
\hspace{0.5cm}
\subfloat[]{\begin{tikzpicture}[scale=0.6]
\draw[->] (0,0) -- (4,0)  node[right]{$x$};
\draw[->] (0,0) -- (0,4)  node[above]{$y$};
\draw (0,0) -- (4,4)  node[above]{$y=x$};
\draw (1,-0.1) -- (1,0.1) node[below]{$\frac{1}{2}$};
\draw (2,-0.1) -- (2,0.1) node[below]{$1$};
\draw (-0.1,2) -- (0.1,2) node[left]{$1$};
\draw[blue, thick] (0,0) -- (1,0) -- (2,2) -- (4,2) node[above]{$y=f_1(x)$};
\draw[blue, fill=blue] (0,0) circle (0.5ex);
\draw[blue, fill=blue] (2,2) circle (0.5ex);
\end{tikzpicture}}
\hspace{0.5cm}
\subfloat[]{\begin{tikzpicture}[scale=0.6]
\draw[->] (0,0) -- (4,0)  node[right]{$x$};
\draw[->] (0,0) -- (0,4)  node[above]{$y$};
\draw (0,0) -- (4,4)  node[above]{$y=x$};
\draw (1,-0.1) -- (1,0.1) node[below]{$\frac{1}{2}$};
\draw (2,-0.1) -- (2,0.1) node[below]{$1$};
\draw (3,-0.1) -- (3,0.1) node[below]{$\theta$};
\draw (-0.1,2) -- (0.1,2) node[left]{$1$};
\draw (-0.1,3) -- (0.1,3) node[left]{$\theta$};
\draw[blue, thick] (0,0) -- (1,0) -- (2.5,3) -- (4,3) node[below]{$y=f_\theta(x)$};
\draw[blue, fill=blue] (0,0) circle (0.5ex);
\draw[blue, fill=blue] (2,2) circle (0.5ex);
\draw[blue, fill=blue] (3,3) circle (0.5ex);
\end{tikzpicture}}
\caption{(a) The function $f_\theta$ for $\theta<1$ that has a fixed point at $0$.
(b) The function $f_1$ that has fixed points at $0$ and  $1$.
(c) The function $f_\theta$ for $\theta > 1$ that has fixed points at $0$, $1$, and $\theta$.
}
\label{fig:ftheta}
\end{figure}

Returning to the goal of this paper, we are allowed to assume that the map we want to learn the dynamics of is $f_1\colon \R\to \R$, but we are not allowed to assume that we know the functional form of $f_1$.
We are also allowed to assume that we have a finite (but arbitrarily large) set of data
$\setof{ (x_k, f_1(x_k)) \mid x_k\in [-2,2]}$
that can be used to obtain an approximation $G\colon \R\to \R$ such that $\sup_{x\in [-2,2]}|f_1(x) - G(x)|<\rho$.
Observe that even for arbitrarily small $\rho >0$, we cannot guarantee that $G$ has exactly two fixed points, and therefore we cannot guarantee that we can learn the number of fixed points of $f_1$ from $G$.
\end{ex}

While Example~\ref{ex:bifurcation} may appear to be too trivial to be of interest, it is indicative of a more serious challenge.
Consider a continuous parameterized family of maps $f\colon X\times [0,1]^n \to X$, where for each $\lambda \in [0,1]^n$, $f_\lambda := f(\cdot,\lambda)$.
A parameter value $\lambda_0$ is a \emph{bifurcation point} if for any neighborhood $U$ of $\lambda_0$ there exists $\lambda\in U$ such that $f_\lambda$ is not conjugate to $f_{\lambda_0}$.
In particular, $\theta = 1$ is a bifurcation point in Example~\ref{ex:bifurcation}.
As is shown in \cite{newhouse,palis:takens} if $d\geq 2$ (recall that $X\subset \R^d$), then there are parameterized families of maps for which the bifurcation points contain a Cantor set of positive measure.
As a consequence, in general it is not possible to identify the dynamics of $f$ via an approximation.

We avoid the above mentioned challenges by employing a coarser framework for dynamics that we believe to be sufficiently refined for many applications.
Figure~\ref{fig:overview} presents an overview of our approach.

\begin{figure}[!htb]
    \centering
    \includegraphics[width=0.9\linewidth]{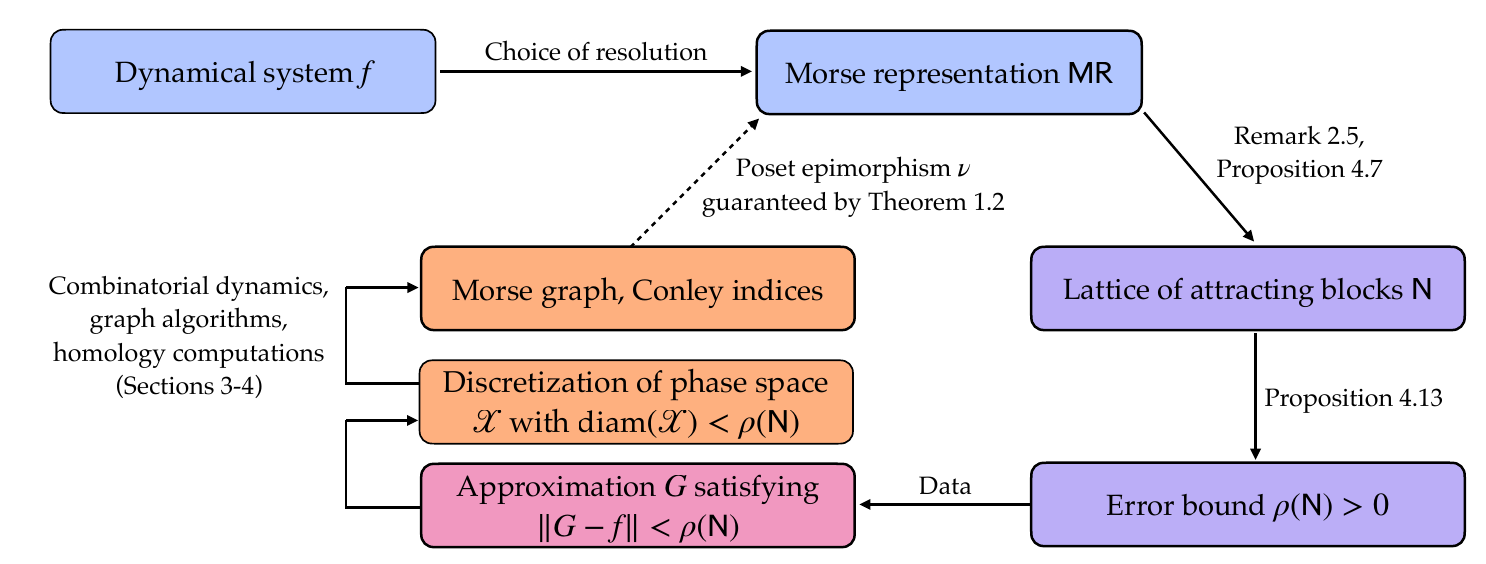}
    \caption{
    Overview of the proposed framework. 
    (Blue) Given a dynamical system $f$, a Morse representation $\sMR$ is equivalent to a choice of resolution of the dynamics. In applications $f$ and $\sMR$ are unknown.
    (Purple) Theory guarantees the existence of a lattice of attracting blocks $\sN$ that in turn determines an error bound $\rho$.  In applications $\sN$ and $\rho$ are unknown.
    (Pink) Given $\rho$, we assume the existence of an approximation $G$ that satisfies {\bf P1}. In applications, the approximation $G$ is derived from data generated by $f$.
    (Orange) Given $G$, we compute a combinatorial model $\cG$ that takes the form of a directed graph on a discretization of the phase space $\cX$. 
    Graph algorithms produce a Morse graph, which captures the gradient-like structure of the dynamics. Homology computations produce Conley indices, topological invariants that provide further information about the recurrent dynamics.
    (Dashed line) Theorem~\ref{thm:intro:main} guarantees that an order-preserving surjection $\nu$ from the Morse graph to the chosen Morse representation exists. As a consequence the computations provide correct information for any system satisfying the error bound.
    }
    \label{fig:overview}
\end{figure}

Since we do not expect that the typical reader is familiar with this framework and its associated machinery, a significant portion of this paper is spent introducing the necessary definitions and concepts.
In particular, in Section~\ref{sec:ConleyThy} we provide precise definitions of the terms used in the following theorem, which is our main result.

\begin{theorem}
\label{thm:intro:main}
Let $X\subset \R^d$ be a
finite union of closed, bounded, convex $d$-dimensional polytopes.
Let $f$ be a continuous function such that $f(X)\subset \Int(X)$  where $\Int$ denotes interior.
Let $(\sMR,\leq_{\sMR_f})$ be a Morse representation of $f$.
For each Morse set $M\in \sMR$, let $\sCon_*(M)$ denote the homology Conley index of $M$ under the dynamics induced by $f$.

Let $G\colon \R^{d}\to\R^{d}$ satisfy {\bf P1} - {\bf P3}.
If $\rho >0$ is sufficiently small, then using $G$ we can
\begin{enumerate}
    \item [(i)] Compute a Morse graph $(\sMG,\leq_\sMG)$ for which there exists a poset epimorphism, 
    \[
    \nu\colon \sMG \to \sMR, 
    \]
    and 
    \item [(ii)] Use $\nu$ to determine $\Con(M)$ for each $M\in\sMR$.
\end{enumerate}
\end{theorem}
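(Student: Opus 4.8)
The plan is to combine three ingredients: a quantitative choice of $\rho$ dictated by the finite lattice of attracting blocks that realizes $\sMR$; the combinatorial model $\cG$ built from $G$ via {\bf P1}--{\bf P3}; and the structural fact that a refinement of Morse decompositions induces an order-preserving surjection of the associated posets. For Step~1 (fixing the error bound), the theory recalled in Section~\ref{sec:ConleyThy} (the purple portion of Figure~\ref{fig:overview}) provides a finite lattice $\sN$ of attracting blocks of $f$ realizing $(\sMR,\leq_{\sMR_f})$; in particular $f(\cl N)\subset\Int N$ for every $N\in\sN$. Since $\sN$ is finite and $X$ is compact, I would first extract $\rho_0>0$ with $f(\cl N)+\overline{B(\rho_0)}\subset\Int N$ for all $N\in\sN$, and such that each attractor $\omega(N)$ lies at distance $>\rho_0$ from $\bdy N$; the hypothesis ``$\rho$ sufficiently small'' then becomes $\rho<\rho_0$. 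Note that $\rho<\rho_0$ also forces $G(X)\subset\Int X$ via {\bf P1}, so $G$ admits a legitimate outer enclosure on $X$.

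For Step~2 (the combinatorial model and the Morse graph), using {\bf P2} and {\bf P3} I would choose a grid $\cX$ on $X$ of mesh $\delta$ small enough that $\rho+L\delta<\rho_0$ (with $L$ the Lipschitz bound) and that $\delta$ lies below the Lebesgue number of $\setof{\Int N}_{N\in\sN}$, and then define $\cF\colon\cX\mvmap\cX$ by enclosing $G(\xi)$ and inflating by $\rho+L\delta$, so that $f(\xi)\subset|\cF(\xi)|$ for every cell $\xi$. The resulting directed graph is $\cG$. Its strongly connected components ordered by reachability give $(\sMG,\leq_\sMG)$, and, by the machinery of Section~\ref{sec:ConleyThy}, the lattice $\sABlock(\cG)$ of combinatorial attracting blocks realizes via $|\cdot|$ to a finite sublattice $\sL\subset\sAtt(f)$, with $\sMG$ serving as the indexing poset of the corresponding Morse decomposition of $f$. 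This produces everything asserted in (i) except the map $\nu$.

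For Step~3 (that $\sMG$ refines $\sMR$, yielding $\nu$, and then (ii)), for $N\in\sN$ I would set $\hat N:=\setdef{\xi\in\cX}{\cl\xi\subset N}$ and verify two things. First, $\hat N\in\sABlock(\cG)$: if $\cl\xi\subset N$ then $f(\cl\xi)\subset f(\cl N)$ sits in $\Int N$ with $\rho_0$-margin, so $|\cF(\xi)|\subset f(\xi)+\overline{B(\rho+L\delta)}$ still sits in $\Int N$ with margin exceeding $\delta$, whence every cell of $\cF(\xi)$ lies in $\hat N$. Second, $\omega(|\hat N|)=\omega(N)$: ``$\subset$'' is immediate from $|\hat N|\subset N$, and ``$\supset$'' holds because the mesh is below the relevant Lebesgue number, so the invariant set $\omega(N)$ is covered by cells with closure in $N$, hence $\omega(N)\subset|\hat N|$. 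Thus $N\mapsto\omega(N)$ embeds $\sN$ as a sublattice of $\sL$, i.e., the Morse decomposition indexed by $\sMG$ refines the one indexed by $\sMR$; invoking the standard fact that in a refinement each fine Morse set lies in a unique coarse Morse set, each coarse Morse set contains at least one fine one, and a connection $M_2\walk M_1$ downstairs lifts to equality or a connection upstairs, the induced assignment $\nu\colon\sMG\to\sMR$ is an order-preserving surjection, completing (i). For (ii), given $M\in\sMR$ realized in $\sN$ by an attractor pair $A'\le A$, the realized sets $|\hat A|$ and $|\hat A|\cap|\hat{A'}|$ together with the exit data from $\cF$ form an index pair for $M$ with respect to $f$ — legitimate precisely because combinatorial attracting blocks of $\cG$ are attracting blocks of $f$ via the enclosure $f(\xi)\subset|\cF(\xi)|$; its relative homology is a finite computation on $\cX$ and, by homotopy invariance of the Conley index, equals $\sCon_*(M)$, so since $\nu$ identifies which cells assemble $\hat A$ and $\hat{A'}$ this is the determination of $\Con(M)$ claimed.

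I expect the crux to be Step~3 — showing that, once $\rho$ is below the error bound $\rho_0$ determined by $\sN$ and the mesh is chosen accordingly, $\sABlock(\cG)$ genuinely resolves $\sN$. The subtle direction is the inclusion $\omega(N)\subset\omega(|\hat N|)$: one must guarantee that passing to the finite combinatorial model loses none of the attractor, which is where compactness of $\omega(N)$ inside $\Int N$, together with the interplay between the mesh, the Lipschitz constant, and $\rho$, is essential. The remaining steps are either standard or provided by the framework developed in Section~\ref{sec:ConleyThy}.
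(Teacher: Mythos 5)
Your Steps 1 and 2, and the first half of Step 3, follow essentially the same route as the paper: realize $\sMR$ by a finite lattice $\sN$ of attracting blocks, let the tolerances of the $N\in\sN$ dictate $\rho$, approximate each $N$ by a combinatorial forward-invariant set of cells (you use the inner set $\hat N$ of cells with closure in $N$; the paper uses the outer set $s(N)$ of cells meeting $N$ — both work), and show the realization is again an attracting block with the same attractor. The genuine gap is the last move of Step 3, where you obtain $\nu$ by "invoking the standard fact" about refinements of Morse decompositions. That fact presupposes that the finer decomposition consists of nonempty invariant sets of $f$, each contained in a unique coarser Morse set. Here the nodes of $\sMG(\cG)$ need not correspond to nonempty invariant sets of $f$ at all: a recurrent component $\pi^{-1}(q)$ of the combinatorial map can isolate the empty set (spurious recurrence, nodes $q_4,q_5,q_6$ in Figure~\ref{fig:G}). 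For such $q$ the criterion "the fine Morse set lies in a unique coarse Morse set" is vacuous — the empty set lies in everything — so it does not define $\nu(q)$, and surjectivity and order-preservation do not follow from the classical statement. The paper is forced to define $\nu$ purely combinatorially, by which tile $s(k(\psi(M)))\setminus s(k(\psi(M)^<))$ contains the set of cells $\pi^{-1}(q)$, and then must prove (Lemma~\ref{lem:recurrent_in_fwd_inv}, Propositions~\ref{prop:qinM}, \ref{prop:interval}, \ref{prop:partition}) that every recurrent component lies in exactly one tile, that every $M$ receives at least one $q$, and that reachability in $\cG$ respects the tile order. That cluster of arguments is the actual content of the proof and is absent from your proposal.

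Two smaller points. First, in part (ii) you assert that the relative homology of the realized index pair "equals $\sCon_*(M)$"; the Conley index is the shift equivalence class of the \emph{induced endomorphism} $f_{P*}$, not the homology of the pair, and to compute that endomorphism from $\cG$ one needs $\cG$ to be inscribed into a good closed cover (Definition~\ref{defn:inscribedMV}, Proposition~\ref{prop:inscribed}, Remark~\ref{rem:ConG}) — this is an additional constraint on the mesh beyond the Lebesgue-number condition you impose. Second, your claim that $\sMG$ "serv[es] as the indexing poset of the corresponding Morse decomposition of $f$" is the same overstatement in embryonic form: the passage $q\mapsto\omega(|\cN(q)|)$ is neither injective nor does it land in nonempty attractors, which is precisely why the epimorphism $\nu$ (rather than an isomorphism) is the correct conclusion.
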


An imprecise statement of Theorem~\ref{thm:intro:main} is as follows.
Choose a Morse representation $\sMR$ (see Definition~\ref{defn:MorseRepresentation}) for the function $f$.
In general $f$ has many (possibly infinitely many) Morse representations, thus the choice of $\sMR$ can be interpreted as a choice of resolution of the dynamics of $f$ that is of interest. 
The theorem implies that we can employ an approximation $G$ to (i) organize the global dynamics of $f$ in terms of its gradient-like behavior, and (ii) compute  homological invariants (Conley indices) from which we can obtain rigorous statements concerning the dynamics of $f$.

To provide slightly more detail, a Morse graph (see Definition~\ref{defn:MorseGraph}) is a partially ordered set (poset) whose elements identify potential recurrent dynamics and whose order structure indicates the asymptotic direction of nonrecurrent dynamics.
The Conley index $\sCon_*$ (see Definition~\ref{defn:ConleyIndex}) is a computable \cite{mischaikow:weibel} algebraic topological extension of the Morse index that we employ to identify nonlinear dynamics.
Most fundamentally, a non-trivial Conley index implies the existence of a non-empty invariant set, however the converse is \emph{not} true. 
The Conley index can also be used to prove the existence of equilibria \cite{srzednicki:85,mccord:88}, periodic orbits \cite{mccord:mischaikow:mrozek}, heteroclinic orbits \cite{conley:cbms},  chaotic dynamics \cite{mischaikow:mrozek:95,szymczak:96, day:junge:mischaikow, day:frongillo}, and semi-conjugacies onto nontrivial dynamics \cite{mischaikow:95, McCord:Mischaikow:96, mccord:00}.
Thus, the framework being considered in this paper is capable of identifying many of the building blocks of traditional nonlinear dynamics.

\begin{rem}
\label{rem:main_thm}
Theorem~\ref{thm:intro:main} guarantees that given the constraint that $\rho$ is sufficiently small there is a constructive method for identifying the desired dynamics.
This may appear to be a weak result since no quantitative bounds are provided for $\rho$.
However, the complexity of the geometry of attractors and the boundaries of their domains of attraction can be extreme \cite{kennedy:yorke}, and this geometry plays an intrinsic role in the necessary value of $\rho$.
Given the level of generality at which we are working (the only essential assumption is that $f$ is continuous) we see no path forward, based on finite data, to provide a sharp quantification of potential values of $\rho$.
\end{rem}

As presented in Theorem~\ref{thm:intro:main} our result can be viewed as an abstract approximation theorem.
To emphasize that {\bf P1} - {\bf P3} can (at least conceptually) be realized via some form of machine learning we recall two theorems associated with feedforward neural networks, also known as multilayer perceptrons.
The first theorem is a classical result from machine learning, the universal approximation theorem \cite{cybenko, hornik, leshno:lin:pinkus:schocken}, which guarantees that ${\bf P1}$ can be achieved.

\begin{theorem}
\label{thm:universalApproximation}
Consider a compact set $X\subset \R^d$ and assume that $f\colon X\to X$.
Given $\rho >0$ there exists a fully-connected feedforward neural network $G \colon \R^{d} \to \R^{d}$  such that $\sup_{x\in X}\| f(x) - G(x)\|<\rho$.    
\end{theorem}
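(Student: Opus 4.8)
The plan is to reduce the vector-valued claim to $d$ applications of the classical scalar universal approximation theorem and then reassemble. Write $f=(f_1,\dots,f_d)$; since $f$ is continuous (as assumed throughout the paper), each coordinate function $f_i\colon X\to\R$ is continuous. Fix $\rho>0$ and set $\epsilon:=\rho/\sqrt{d}$.

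First I would invoke the classical theorem of \cite{cybenko,hornik,leshno:lin:pinkus:schocken} in the following form: for a continuous real-valued function on a compact subset of $\R^d$ and any $\epsilon>0$, there is a single-hidden-layer feedforward network $x\mapsto\sum_{j=1}^{m} c_j\,\sigma(w_j\cdot x+b_j)$, with $\sigma$ a fixed nonpolynomial activation, that approximates it uniformly to within $\epsilon$. Applying this to each $f_i$ produces networks $N_1,\dots,N_d\colon\R^d\to\R$ with $\sup_{x\in X}|f_i(x)-N_i(x)|<\epsilon$.

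Next I would build $G\colon\R^d\to\R^d$ by running the $N_i$ in parallel: the hidden layer of $G$ is the disjoint union of the hidden layers of the $N_i$, and the $i$-th output coordinate of $G$ is $N_i$; padding the unused weight entries with zeros exhibits $G$ as a fully-connected feedforward network. For $x\in X$ one then has $\|f(x)-G(x)\|^2=\sum_{i=1}^{d}|f_i(x)-N_i(x)|^2<d\,\epsilon^2=\rho^2$, which is the desired estimate.

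I expect no substantive obstacle; the argument is entirely classical. The only points needing care are citing the classical result in a generality that allows an arbitrary compact $X$ together with the activation functions used in practice --- precisely the Leshno--Lin--Pinkus--Schocken refinement --- and observing that the hypothesis $f(X)\subset X$ plays no role: the constructed $G$ need not map $X$ into itself, which is consistent with {\bf P1} being stated purely as a uniform bound. (Were a fixed finite width or depth imposed the statement would fail, but no such restriction is present.)
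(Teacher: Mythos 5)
Your argument is correct and coincides with what the paper intends: the paper offers no proof of Theorem~\ref{thm:universalApproximation}, simply citing the classical scalar results of \cite{cybenko, hornik, leshno:lin:pinkus:schocken}, and your coordinate-wise reduction with parallel assembly (and the $\rho/\sqrt{d}$ bookkeeping) is the standard way to pass from those results to the vector-valued statement. Your side remarks --- that the Leshno--Lin--Pinkus--Schocken refinement is what accommodates the piecewise linear activations needed later for Theorem~\ref{thm:polyhedral-decomposition}, and that the hypothesis $f(X)\subset X$ is not used since {\bf P1} is only a uniform bound --- are both accurate.
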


To guarantee that {\bf P3} can be realized
we add the restriction that the activation functions are non-constant and piecewise linear (the results of \cite{leshno:lin:pinkus:schocken} guarantee that Theorem~\ref{thm:universalApproximation} still apply). This gives rise to approximating functions of the following form.

\begin{theorem}[Grigsby and Lindsey \cite{grigsby:lindsey}]
Let $G\colon \R^d \to \R^d$ be a fully-connected feedforward neural network with piecewise linear activation. 
Then $G$ is affine-linear on the cells of a realization of $\R^d$ as a polyhedral complex $\cP$.
\label{thm:polyhedral-decomposition}    
\end{theorem}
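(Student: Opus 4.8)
The plan is to induct on the number of layers of $G$, maintaining at each stage a finite polyhedral complex realizing $\R^d$ on each cell of which the partial network computed so far is affine-linear. Write
\[
G = L_n \circ \sigma \circ L_{n-1} \circ \sigma \circ \cdots \circ \sigma \circ L_1 ,
\]
where each $L_j$ is an affine map and $\sigma$ denotes the coordinatewise application of the continuous, piecewise linear scalar activation $s\colon \R\to\R$. As holds for the activations relevant here (ReLU, leaky ReLU, hard tanh, and the like), I take $s$ to have finitely many breakpoints $t_1<\cdots<t_m$, so that $s$ is affine on each of the closed intervals $(-\infty,t_1], [t_1,t_2], \ldots, [t_m,\infty)$. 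Set $G_1 := L_1$ and $G_j := L_j\circ\sigma\circ G_{j-1}$ for $2\le j\le n$, so $G_n = G$; I will produce polyhedral complexes $\cP_1,\ldots,\cP_n$, with $\cP_j$ refining $\cP_{j-1}$, such that $G_j$ is affine on each cell of $\cP_j$, and then take $\cP := \cP_n$.

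For the base case, let $\cP_1$ be the trivial complex whose unique maximal cell is $\R^d$; then $G_1 = L_1$ is affine on it. For the inductive step, assume $\cP_{j-1}$ is a finite polyhedral complex with $\bigcup\cP_{j-1} = \R^d$ on whose cells $G_{j-1}$ is affine. For each cell $C$ of $\cP_{j-1}$ and each output coordinate $i$ of $G_{j-1}$, the function $x\mapsto (G_{j-1}(x))_i$ agrees on $C$ with an affine functional $\ell_{C,i}$, so each preimage $\ell_{C,i}^{-1}(t_k)$ is empty, all of $\R^d$, or an affine hyperplane. Let $\mathcal{H}$ be the collection of all such hyperplanes; it is finite because $\cP_{j-1}$ has finitely many cells and there are finitely many coordinates and breakpoints. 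Let $\cP_j$ be the common refinement of $\cP_{j-1}$ with the polyhedral complex cut out by the finite affine hyperplane arrangement $\mathcal{H}$; this is again a finite polyhedral complex realizing $\R^d$. Each cell of $\cP_j$ lies weakly on one side of every hyperplane in $\mathcal{H}$, so for every coordinate $i$ the functional $\ell_{C,i}$ maps that cell into a single closed linearity interval of $s$; hence $\sigma\circ G_{j-1}$, and therefore $G_j = L_j\circ\sigma\circ G_{j-1}$, is affine on each cell of $\cP_j$. This closes the induction.

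The only steps that are not bookkeeping are two standard facts of polyhedral geometry, which I would isolate as a lemma and cite rather than reprove: (a) a finite affine hyperplane arrangement in $\R^d$ determines a finite polyhedral complex whose union is $\R^d$ (its cells being the closures of the chambers together with all their faces); and (b) the common refinement of two polyhedral complexes with the same underlying space is again a polyhedral complex. Both follow from the elementary observations that a finite intersection of closed half-spaces is a polyhedron and that the faces of such polyhedra fit together face-to-face; a standard reference on polyhedral or PL topology suffices. Two further points merit a remark: the cells of $\cP$ are in general unbounded, which is consistent with the statement, as it asks only for a realization of $\R^d$ as a polyhedral complex and not for a finite complex of bounded polytopes; and the argument never uses full-connectedness, only that $G$ is a finite composition of affine maps interleaved with coordinatewise applications of a piecewise linear scalar function with finitely many breakpoints.
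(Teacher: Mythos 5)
The paper does not prove this statement: it is quoted from Grigsby and Lindsey \cite{grigsby:lindsey}, whose construction of the canonical polyhedral complex of a network is precisely the layer-by-layer induction you give (refine the current complex by the hyperplanes where each coordinate of the partial network hits a breakpoint of the activation). Your argument is correct --- granting your explicitly flagged, and standard, assumption that the piecewise linear activation has finitely many breakpoints --- so there is nothing within the paper itself to compare it against.
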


Observe that theoretically, local Lipschitz constants can be obtained by evaluating $G$ on the cells of $\cP$. The work of \cite{fazlyab2019} provides an algorithm to compute an accurate upper bound for a global Lipschitz constant for $G$.

\begin{rem}
\label{rem:notGoal} 
As suggested in Remark~\ref{rem:main_thm} in this paper we do not address the question of how to achieve $G$ using a machine learning method.
Rather, our focus is on demonstrating that given $G$, it is possible to rigorously identify dynamics of $f$ using the framework that we are proposing.
\end{rem}

\begin{ex}
\label{ex:Gapprox}    
We use the family of maps introduced in Example~\ref{ex:bifurcation} to provide intuition about the content of Theorem~\ref{thm:intro:main}.
We leave it to the reader to check that $f_1([-2,2])= [0,1]\subset \Int([-2,2])$.
Consider (using the language of Theorem~\ref{thm:intro:main}) 
a Morse representation $(\sMR,\leq_{\sMR_{f_1}})$ for $f_1$ whose elements, called Morse sets, are the fixed points (invariant sets) $x=0$ and $x=1$, and for which the partial order is $0 \leq_{\sMR_{f_1}} 1$.
The partial order captures the fact that if $x\in (0,1)$ then $\lim_{n\to \infty}f_1^n(x) = 0$ and $\lim_{n\to -\infty}f_1^n(x) = 1$.
The Morse sets identify the recurrent dynamics of $f_1$ and the partial order characterizes the non-recurrent (gradient-like) dynamics of $f_1$.
The Conley index of $0$ is that of an attracting fixed point, while the Conley index of $1$ is trivial.

Theorem~\ref{thm:intro:main} states that if $\rho>0$ is sufficiently small and $\sup_{x\in[-2,2]}\|f(x) - G(x)\| < \rho$, then we can use $G$ to recover essential information about the dynamics of $f_1$.

As discussed above, in general, invariant sets (Morse sets) are not computable, and therefore, Morse representations are not computable.
What are computable are Morse graphs (see Definition~\ref{defn:MorseGraph}).
Theorem~\ref{thm:intro:main}(i) provides the link between the computable Morse graph and the desired Morse representation.
The computation of the  Morse graph $\sMG$ is based on the assumption that $X$ is a polygonal region, {\bf P2} and {\bf P3}, and a discretization of the phase space. For this example the discretization is composed of intervals.
This allows us to produce a combinatorial representation of the dynamics in the form of a directed graph $\cG$ (see Sections~\ref{sec:MorseGraphs} and \ref{sec:combinatorial_rep_of_dyn}) where the vertices correspond to the above mentioned intervals and the directed edges indicate how intervals are mapped amongst each other.
The nodes of the associated Morse graph capture recurrence within the directed graph and the order relation provides information about the existence of nonrecurrent paths. 

To gain intuition as to why a combinatorial representation is sufficient, assume (this is not true in general) that the approximation $G$ equals $f_\theta$ for some $\theta\approx 1$.
If $G=f_1$, then there is no surprise that we can capture the dynamics of $f_1$.

So, assume that $G=f_\theta$ for $\theta < 1$. 
By assumption $\sup_{x\in[-2,2]}\|f(x) - G(x)\| < \rho$.
Let $I_0$ denote the interval that contains $0$.
Then $G(I_0)\cap I_0 \neq\emptyset$.
Thus, under $G$ the interval $I_0$ that contains $0$ clearly maps to itself, and hence the directed graph $\cG$ contains the self edge $I_0 \to I_0$.
Let $I_1$ denote the interval that contains $1$.
Similarly, (since $\rho$ is assumed small or equivalently $\theta \approx 1$) the above mentioned bounds  implies that $G(I_1)\cap I_1 \neq\emptyset$ and hence $\cG$ contains the self edge $I_1 \to I_1$. 
Ideally, intervals between $I_0$ and $I_1$ are mapped strictly to the left, in which case the Morse graph $\sMG$ has the form $I_1 \rightarrow I_0$.
The poset morphism $\nu\colon \sMG\to \sMR$ has the form $\nu(I_0) =0$ and $\nu(I_1) =1$, and in this idealized case $\nu$ is an isomorphism. 

As described in Section~\ref{sec:OuterApproximations}, $\cG$ can be used to compute Conley indices and furthermore these Conley indices are valid for the continuous dynamics generated by $G$.
The Conley index  associated with $I_0$ is that of an attracting fixed point.
The Conley index associated with $I_1$ is trivial; it must be since under $G$ the invariant set associated with $I_1$ is the empty set.
The fact that $\nu$ is a poset morphism allows us to conclude that the Conley indices for the fixed points $0$ and $1$ of $f_1$ are that of an attracting fixed point and trivial, respectively.
Referring again to \cite{srzednicki:85,mccord:88}, knowledge that the Conley index of $\nu(I_0)$ is that of a fixed point allows us to conclude that the invariant set associated with $\nu(I_0)$ contains a fixed point.
As emphasized above the fact that the Conley index of $\nu(I_1)$ is trivial does not imply that the invariant set associated with $\nu(I_1)$ is empty.
The  Morse graph $\sMG$ identified $I_1$ and the claim that $\nu$ is an epimorphism implies that $\nu(I_1)$ is a non-empty invariant set, but suggests that it may not be robust with respect to perturbations.

For a variety of reasons $\nu$ need not be an isomorphism (these reasons are what make the proof of Theorem~\ref{thm:intro:main} challenging).
Again, with the goal of gaining intuition, assume that $G=f_\theta$ for $\theta > 1$. 
The explicit form of $G$ (see Figure~\ref{fig:ftheta}(c)) and our assumption that $G$ satisfies 
{\bf P1} - {\bf P3}
allows us to deduce that there are distinct intervals $I_0$, containing $0$, $I_1$, containing $1$, and $I_\theta$, containing $\theta$, on which $G$ is affine-linear.
The combinatorial representation of the dynamics of $G$ must contain the self edges $I_0\to I_0$, $I_1\to I_1$, and $I_\theta\to I_\theta$.

With regard to additional edges one possibility is that the above mentioned bounds force the existence of $I_1 \to I_\theta$ and $I_\theta \to I_1$.
This implies that there is recurrence between $I_1$ and $I_\theta$, and thus they are identified as a single node in the Morse graph $\sMG$.
Again, assuming an ideal setting this leads to $\nu\colon \sMG \to \sMR$ being an isomorphism as above.

However, an alternative option is that  $I_1 \to I_\theta$, but there is no edge from $I_\theta$ to $I_1$.
In this case the simplest Morse graph consists of nodes $I_0$, $I_1$, and $I_\theta$ with the ordering $I_0 \leq_\sMG I_1$ and $I_\theta \leq_\sMG I_1$.
In this case $\nu(I_0) = 0$, $\nu(I_1) = 1$, and $\nu(I_\theta) = 1$ is the poset epimorphism guaranteed by Theorem~\ref{thm:intro:main}(i).
Observe that in this case we have learned dynamics that is more complicated than that expressed by $f$, however the existence of $\nu$ guarantees that we can recover $\sMR$, the chosen Morse representation of $f$.

As indicated above, $\cG$ can be used to compute Conley indices.
For this case the Conley indices of $I_0$ and $I_\theta$ are that of a stable fixed point, while the Conley index of $I_1$ is that of a fixed point with an unstable manifold of dimension one. 
However, $\cG$ also allows us to determine that under $f_1$ the Conley indices of $0$ and $1$ are those of a stable fixed point and trivial, respectively.
\end{ex}

\begin{rem}
\label{rem:applications}
We remind the reader that Theorem~\ref{thm:intro:main} demonstrates that using the coarser framework of dynamics of this paper, i.e., a Morse representation, dynamics generated by a continuous function can be recovered by a fully-connected feedforward neural network $G$.
In the context of applications, we do not know $f$ and hence cannot know a Morse representation $(\sMR,\leq_{\sMR_f})$.
In practice, $G$ is learned from data, $\cG$ is constructed, and then  conclusions are drawn about the dynamics of $f$.
We return to this point in Section~\ref{sec:examples}.
\end{rem}

An outline of the paper is as follows. In Section~\ref{sec:ConleyThy}, we introduce Conley theory for continuous maps and describe how dynamics can be characterized in this framework. To demonstrate the generalizability of this framework, we present Theorem \ref{thm:intro:generalizable}, which loosely states that
for any continuous function $g$ sufficiently close to a continuous function $f$, the characterization of dynamics of $f$ also characterizes the dynamics of $g$. In Section~\ref{sec:MorseGraphs}, we introduce combinatorial dynamics and describe how a Morse graph is computed.
In Section~\ref{sec:combinatorial_rep_of_dyn}, we connect this perspective back to continuous dynamics and discuss combinatorial representations of dynamics obtained from approximations.
In Section~\ref{sec:proof}, we prove Theorem~\ref{thm:intro:main}. 
We conclude in Section~\ref{sec:examples} with a demonstration of the applicability of this approach.

\section{Nonlinear Dynamics}
\label{sec:ConleyThy}

This section provides an introduction to Conley theory in the context of  dynamics generated by continuous functions.

\subsection{Continuous Dynamics}
\label{sec:ContinuousDynamics}

We consider continuous functions $f\colon  \R^d\to \R^d$  and restrict our attention to the dynamics on a
finite union of closed, bounded, convex $d$-dimensional polytopes $X\subset \R^d$ for which $f(X) \subset \Int(X)$.
However, the discussion of this section applies to continuous maps on compact metric spaces \cite{conley:cbms, mischaikow:mrozek}.

We do not assume that $f$ is invertible, and therefore, some care must be taken to discuss trajectories of $f$. 
In particular, given $x\in X$, $\gamma_x\colon \Z \to X$ is a \emph{full trajectory through} $x$ if $\gamma_x(0)=x$ and  $\gamma_x(n+1) = f(\gamma_x(n))$ for all $n \in \Z$.
As indicated in the introduction, $S\subset X$ is an invariant set of $f$ if $f(S)=S$.  This is equivalent to requiring that for every $x\in S$, there exists a full trajectory $\gamma_x \colon \Z \to S$.

\begin{defn}
\label{defn:MaxInvSet}
Let $N\subset X$. 
The \emph{maximal invariant set in $N$} under $f$ is
 \[
 \Inv(N,f):= \setof{x \in N \mid \exists \text{ a full trajectory }\gamma_x: \mathbb{Z} \to N}.
 \]
\end{defn}

\begin{defn}
An invariant set $A\subset X$ is an \emph{attractor} if there exists a compact set $N\subset X$ such that 
\begin{equation}
\label{eq:att}
A = \omega(N,f) := \bigcap_{n\geq 0}\cl\left(\bigcup_{k=n}^\infty f^k(N)\right) \subset \Int(N)
\end{equation}
where $\cl$ and $\Int$ respectively denote closure and interior.
\end{defn}
The set of all attractors of $f$ is denoted by $\sAtt(f)$ and forms a bounded distributive lattice under the operations $\vee = \cup$ and $A_1 \wedge A_2 = \omega(A_1 \cap A_2)$
\cite{kalies:mischaikow:vandervorst:14} (for basic definitions associated with lattices see Section~SM1 in the Supplementary Materials).
The minimal element is $\bzero = \emptyset$ and the maximal element is $\bone = \omega(X,f)$.
Given $A\in\sAtt(f)$, the \textit{dual repeller} to $A$ is defined to be $A^*:=\{x\in \bone\mid \omega(x,f)\cap A = \emptyset\}$.

\begin{ex}
\label{ex:att(f)}
Recall $f_\theta$ defined in Example~\ref{ex:bifurcation}.
We leave it to the reader to check that
\[
\bone_\theta := \omega([-2,2],f_\theta) = \begin{cases}
    \setof{0} & \text{if $\theta < 1$}  \\
    [0,\theta] & \text{if $1\leq \theta$.}
\end{cases}
\]
The dynamics of $f_\theta$ is simple, so $\sAtt(f_\theta)$ is finite.
In particular,
\[
\sAtt(f_\theta) = \begin{cases}
    \setof{\emptyset, \bone_\theta}  & \text{if $\theta < 1$,}  \\
    \setof{\emptyset, \setof{0}, \bone_1} & \text{if $\theta = 1$,} \\
    \setof{\emptyset, \setof{0}, \setof{\theta}, \setof{0, \theta}, \bone_\theta} & \text{if $1< \theta$.}
\end{cases}
\]
Attractors are invariant sets, and therefore, it is not  surprising that  $\sAtt(f_\theta)$  changes abruptly as a function of $\theta$.
\end{ex}

Conley's decomposition theorem \cite{conley:cbms,robinson} states that
$CR(f) = \bigcap_{A\in\sAtt(f)} A\cup A^*$
is an invariant set, called the \emph{chain recurrent set}.
Furthermore, all recurrent dynamics of $f$ occurs in $CR(f)$ and $f$ exhibits gradient-like dynamics in the complement of $CR(f)$.

In general $\sAtt(f)$ may be countably infinite \cite{conley:cbms}.
With this in mind, we restrict our attention to  finite sublattices of $\sAtt(f)$ containing both $\bzero$ and $\bone$.
We leave it to the reader to check that if $\sA$ is such a finite sublattice, then $CR(f)\subset \bigcap_{A\in\sA} A\cup A^*$.

\begin{defn}
\label{defn:MorseRepresentation}
A finite sublattice $\sA$ gives rise to a \emph{Morse representation}
\begin{equation}
\label{eq:AtoMR}
(\sMR(\sA),\leq_{\sMR_f}) :=\{A \cap (A^<)^* \mid A \in \sJ^\vee(\sA) \} 
\end{equation}
where $\sJ^\vee(\sA)$ denotes the join-irreducible elements of $\sA$ and $A^<$ denotes the immediate predecessor of $A \in \sJ^\vee(\sA)$
\cite{kalies:mischaikow:vandervorst:21}.
The elements of $\sMR(\sA)$, called \emph{Morse sets}, are nonempty, pairwise disjoint invariant sets.     
\end{defn}

Observe that $(\sMR(\sA),\leq_{\sMR_f})$ is finite since $\sA$ is finite.
The partial order $\leq_{\sMR_f}$ is defined via the minimal transitive extension of the following relation.
Given distinct Morse sets $M,M'\in \sMR(\sA)$, set $M <_{\sMR_f} M'$ if there exists $x\in X$ such that $\omega(x,f)\subset M$ and there exists a full trajectory $\gamma_x\colon \Z \to \bone$ such that $\bigcap_{n>0} \cl(\gamma_x(-\infty,-n]) \subset M'$.
Observe that the partial order $\leq_{\sMR_f}$ provides explicit information about the gradient-like structure of the dynamics; the dynamics moves from higher-ordered Morse sets to lower-ordered Morse sets.

\begin{rem}
    \label{rem:MRtoAtt}
As presented above, a finite sublattice of $\sAtt(f)$ gives rise to a well defined Morse representation.
The converse is also true, i.e., by \cite[Theorem 6]{kalies:mischaikow:vandervorst:21} given a Morse representation $\sMR$ of $f$ there is a unique finite lattice of attractors $\sA$ such that \eqref{eq:AtoMR} holds.
In particular, \eqref{eq:AtoMR} gives rise to a poset isomorphism defined by
\begin{align*}
   \psi\colon \sMR &\to \sJ^\vee(\sA) \\
   M & \mapsto A
\end{align*}
for $M = A\cap (A^<)^*$.
\end{rem}

\begin{ex}
\label{ex:A}   
A given dynamical system can have multiple Morse representations.
The lattice of attractors $\sAtt(f_\theta)$ gives rise to the following Morse representation: 
\[
\sMR(\sAtt(f_\theta),\leq_{f_\theta}) = \begin{cases}
    0 & \text{if $\theta < 1$} \\
    (\setof{0, 1}, 0<_{f_\theta} 1) & \text{if $\theta =1$} \\
    (\setof{0,1,\theta},0<_{f_\theta} 1 >_{f_\theta} \theta) & \text{if $\theta > 1$.} 
\end{cases}
\]
An alternative lattice of attractors for $\theta \geq 1$ is 
$\sA_\theta := \setof{0,[0,\theta]}$
in which case
$\sMR(\sA_\theta,\leq_{f_\theta}) = (\setof{0,[1,\theta]},0<_{f_\theta} [1,\theta])$.
Both Morse representations provide a decomposition of the dynamics of $f_\theta$. 
The existence of the order preserving epimorphism $\mu\colon \sMR(\sAtt(f_\theta))\to \sMR(\sA_\theta)$ defined by $\mu(\setof{0}) = \setof{0}$ and $\mu(\setof{1}) = \mu(\setof{\theta}) = [1,\theta]$ indicates that the decomposition provided by $\sMR(\sAtt(f_\theta))$ is finer  than that of $\sMR(\sA_\theta)$.
In this sense the choice of Morse representation can be viewed as a choice of the scale at which one wants to understand the dynamics.
\end{ex}

As Example~\ref{ex:A} demonstrates, Morse representations need not be stable with respect to perturbations, and thus, cannot be expected to be computable in general.

The partial order associated with a Morse representation provides a characterization of the structure of the non-recurrent dynamics. 
To identify finer structures we make use of the Conley index that we introduce via an extremely terse series of definitions. 
For more detailed discussion we refer the reader to \cite{conley:cbms, robbin:salamon, mrozek:99, szymczak:95, franks:richeson, mischaikow:mrozek, mischaikow:weibel}.

\begin{defn}
\label{defn:isolatingNeighborhood}
A subset $N\subset X$ is an \emph{isolating neighborhood} if $\Inv(\cl(N),f)\subset\Int(N)$.
An invariant set $S$ is \emph{isolated} if there exists an isolating neighborhood $N$ such that $S= \Inv(\cl(N),f)$.
\end{defn}

\begin{defn}
\label{defn:indexPair}
Let $S$ be an isolated invariant set of $f$.
Consider a pair $P = (P_1,P_0)$ of compact sets satisfying $P_0\subset P_1 \subset X$ with the property that $P_1\setminus P_0$ is an isolating neighborhood for $S$.
Let $(P_1/P_0, [P_0])$ denoted the pointed quotient space in which $P_0$ is collapsed to a point.
Define $f_P\colon (P_1/P_0, [P_0]) \to (P_1/P_0, [P_0])$ by
\[
f_P(x) := \begin{cases}
    f(x) & \text{if $x, f(x) \in P_1\setminus P_0$}\\
    [P_0] & \text{otherwise.}
\end{cases}
\]
If $f_P$ is continuous, then $P$ is called an \emph{index pair}  and $f_P$ is called an \emph{index map} for $S$.
\end{defn}

By definition if $P = (P_1,P_0)$ is an index pair, then $f_P$ is continuous, and thus, $f_P$ induces a map on homology denoted by $f_{P*}$.

\begin{defn}
    \label{defn:ConleyIndex}
The \emph{(homology) Conley index} $\sCon_*(S)$ of an isolated invariant set $S$ is the shift equivalence class of 
\[
f_{P*} \colon H_*(P_1/P_0, [P_0]) \to H_*(P_1/P_0, [P_0])
\]
where $P = (P_1,P_0)$ is an index pair for $S$.
\end{defn}

For the definition of shift equivalence, the reader is referred to \cite{lind:marcus, boyle, franks:richeson, mischaikow:weibel}.
Over fields, the homology Conley index can be characterized more simply by the rational canonical form of $f_{P*}$, excluding nilpotent blocks \cite{mischaikow:weibel}, \cite[7.3–7.5]{lind:marcus}. 
Examples of computing the homology Conley index are contained in \cite{bush:cowan:harker:mischaikow} and use an efficient algorithm due to Storjohann \cite{storjohann}.
For the purposes of this paper, it is sufficient to know that in the context of the homology Conley index, shift equivalence is computable \cite{mischaikow:weibel}.

The Conley index of $S$ is \emph{trivial}, denoted by $\sCon_*(S) = 0$, if $f_{P*}$ is nilpotent, in which case it is shift equivalent to the zero map.
Rephrasing what is stated in the introduction, if $\sCon_*(S) \neq 0$, then $S\neq \emptyset$.

\subsection{Computable Objects}
\label{sec:ComputableObjects}

The previous subsection focused on the study of invariant sets via Conley theory.
In this subsection we focus on computable objects.

\begin{defn}
\label{defn:attractingblock}
A compact set $N\subset X$ is an \emph{attracting block} for $f$ if $f(N)\subset \Int(N)$.
\end{defn}

Let $\sABlock(X, f)$ denote the set of all attracting blocks for $f$.
The following proposition follows from the work of \cite{kalies:mischaikow:vandervorst:14}. We provide an elementary proof to make this paper more self-contained.

\begin{proposition}\label{prop:ablock_is_bdlat}
Let $X$ be a regular compact subset of $\R^d$.
Let $f$ be a continuous function such that $f(X)\subset \Int(X)$.
Then, the set $\sABlock(X, f)$ is a bounded, distributive lattice with respect to $\vee = \cup$ and $\wedge = \cap$. 
The neutral elements of $\sABlock(X, f)$ are $\bone_b = X$ and $\bzero_b = \varnothing$.
\end{proposition}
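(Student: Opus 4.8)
The plan is to verify directly that $\sABlock(X,f)$ is closed under $\cup$ and $\cap$, that these operations are the lattice operations, and that $\varnothing$ and $X$ are the neutral elements; distributivity is then inherited from the lattice of all subsets of $X$ (equivalently, from the distributivity of $\cup$ over $\cap$), and boundedness is immediate once $\varnothing$ and $X$ are shown to be members. The key point is entirely the closure under the two operations, which is where the hypotheses that $f$ is continuous and $X$ is regular (i.e., $X = \cl(\Int(X))$) are used.

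First I would record the two easy membership facts: $\varnothing$ is an attracting block because $f(\varnothing) = \varnothing \subset \Int(\varnothing)$ vacuously (or by the convention $\varnothing = \Int(\varnothing)$), and $X$ is an attracting block precisely because $f(X) \subset \Int(X)$ is the standing hypothesis; this gives $\bzero_b = \varnothing$ and $\bone_b = X$. Next I would handle $\cup$: if $N_1, N_2 \in \sABlock(X,f)$ then $N_1 \cup N_2$ is compact, and
\[
f(N_1 \cup N_2) = f(N_1) \cup f(N_2) \subset \Int(N_1) \cup \Int(N_2) \subset \Int(N_1 \cup N_2),
\]
using that $\Int(A) \cup \Int(B) \subset \Int(A \cup B)$ for any sets. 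So $N_1 \cup N_2 \in \sABlock(X,f)$.

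The intersection is the one step that genuinely requires an argument rather than a set-theoretic triviality, and I expect it to be the main obstacle. The naive computation $f(N_1 \cap N_2) \subset f(N_1) \cap f(N_2) \subset \Int(N_1) \cap \Int(N_2) = \Int(N_1 \cap N_2)$ is in fact correct, because $\Int(A) \cap \Int(B) = \Int(A \cap B)$ holds as an identity (not merely an inclusion) and $f(N_1 \cap N_2) \subset f(N_1) \cap f(N_2)$ always holds; so $N_1 \cap N_2$ is compact with $f(N_1 \cap N_2) \subset \Int(N_1 \cap N_2)$, hence $N_1 \cap N_2 \in \sABlock(X,f)$. The subtlety worth flagging is why the regularity of $X$ enters at all: it guarantees that $\bone_b = X$ is genuinely the top element of a lattice in which "$\Int$" is computed relative to $\R^d$ consistently (and that $\Int(X) \neq \varnothing$, so the statement is not vacuous), and it is needed if one wants the correspondence with the supplementary-material lattice framework to be well behaved — but it is not needed for the closure arguments themselves.

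Finally, distributivity: $\sABlock(X,f)$ is a sublattice of the power set lattice $(\mathcal{P}(X), \cup, \cap)$, which is distributive, so the identities $N_1 \cap (N_2 \cup N_3) = (N_1 \cap N_2) \cup (N_1 \cap N_3)$ and its dual hold automatically once closure under both operations is established. Together with the membership of $\varnothing$ and $X$ and the obvious facts $\varnothing \cup N = N$, $X \cap N = N$, this shows $\sABlock(X,f)$ is a bounded distributive lattice with the stated neutral elements, completing the proof.
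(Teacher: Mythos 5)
Your proof is correct and follows essentially the same route as the paper's: closure under $\cup$ and $\cap$ via the identities $\Int(N_1)\cup\Int(N_2)\subset\Int(N_1\cup N_2)$ and $\Int(N_1)\cap\Int(N_2)=\Int(N_1\cap N_2)$, the observation that $\varnothing$ and $X$ are the neutral elements, and distributivity inherited from the powerset lattice. Your additional remarks on where regularity of $X$ does and does not enter go beyond what the paper records but do not change the argument.
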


\begin{proof}
 Let $U_1, U_2 \in \sABlock(X, f)$. By definition, $U_1$ and $U_2$ are compact. Since $X$ is a metric space, $U_1 \cap U_2$ and $U_2 \cup U_2$ are compact. Furthermore,
 \begin{align*}
     f(U_1 \cup U_2) & = f(U_1) \cup f(U_2)\\
     & \subset \Int(U_1) \cup \Int(U_2)\\
     & \subset \Int(U_1 \cup U_2),
 \end{align*}
 which proves that $U_1 \cup U_2 \in \sABlock(X, f)$.

 Also,
 \begin{align*}
     f(U_1 \cap U_2) & \subset f(U_1) \cap f(U_2)\\ 
     & \subset \Int(U_1) \cap \Int(U_2)\\
     & = \Int(U_1 \cap U_2),
 \end{align*}
 which proves that $U_1 \cap U_2 \in \sABlock(X, f)$.

 The elements $X$ and $\varnothing$ are attracting blocks that satisfy the definitions of the maximal and minimal elements, respectively. Distributivity follows from the fact that $\sABlock(X, f)$ is a sublattice of the powerset lattice for $X$, which is distributive.
\end{proof}

\begin{rem}
Throughout this paper, a finite sublattice $\sN$ of $\sABlock(f)$ is assumed to contain the minimal and maximal elements $\bzero_b := \emptyset$ and $\bone_b := X$. 
\end{rem}

\begin{rem}
The fact that the lattice operations on $\sABlock(f)$ are $\cap$ and $\cup$ implies that the partial order relation on $\sABlock(f)$ is given by inclusion.
\end{rem}

\begin{rem}
\label{rem:kpsi}
Let $\sA$ be a finite sublattice of $\sAtt(f)$.
By \cite[Theorem 1.2]{kalies:mischaikow:vandervorst:15} there exists a sublattice $\sN$ of $\sABlock(f)$ and a lattice isomorphism $k\colon \sA \to \sN$ such that $\omega(k(A),f)=A$ for all $A \in \sA$.
Furthermore, consider a Morse representation $\sMR(\sA)$ and the corresponding isomorphism $\psi\colon \sMR(\sA)\to \sJ^\vee(\sA)$ of Remark~\ref{rem:MRtoAtt}.
Then, for every $M\in \sMR(\sA)$, $M= \Inv(k(\psi(M))\setminus k(\psi(M)^<),f)$.
\end{rem}

\begin{ex}
\label{ex:Ablock}   
Consider $f_\theta\colon [-2,2]\to [-2,2]$ as defined in Example~\ref{ex:bifurcation} and the Morse representation $\sMR(\sAtt(f_\theta),\leq_{f_\theta})$ of Example~\ref{ex:A}.
We leave it to the reader to check that for $\theta = 1.5$,
\[
\sN = \setof{\emptyset,[-2,0.25],[1.25,2], [-2,0.25]\cup [1.25,2], [-2,2]}
\]
is a sublattice of $\sABlock(f_{1.5})$ and $k\colon \sAtt(f_{1.25})\to \sN$ generated by $k(0) = [-2,0.25]$ and $k(\theta) = [1.25,2]$ is a lattice isomorphism.
Furthermore, $0 = \Inv([-2,0.25],f_{1.5})$,  $\theta = \Inv([1.25,2],f_{1.5})$, and $1 = \Inv([-2,2] \setminus ([-2,0.25]\cup [1.25,2]) ,f_{1.5})$.

We do not claim that $\sN$ is the unique lattice of attracting blocks with this property.
As an example consider
\[
\sN' = \setof{\emptyset,[-2,0.15],[1.35,2], [-2,0.15]\cup [1.35,2], [-2,2]}
\]
with $k'$ generated by $k'(0) = [-2,0.15]$ and $k'(\theta) = [1.35,2]$. 
\end{ex}

\begin{rem}
\label{rem:ablock_stable}
A result of fundamental importance is that attracting blocks are stable with respect to continuous perturbations.
To be more precise, let $N$ be an attracting block for $f$.
There exists $\epsilon >0$ such that if $g\colon X\to X$ is a continuous function and $\sup_{x\in X} \| f(x) -g(x)\| < \epsilon$, then $N$ is an attracting block for $g$.
Since the lattice $\sN$ consists of a finite number of attracting blocks, the same argument guarantees that $\sN$ is stable with respect to perturbations.
\end{rem}

This stability is exhibited in Example~\ref{ex:Ablock} where $\sN$ is a lattice of $\sABlock(f_\theta)$ for $1.25 < \theta < 1.75$.

The following result follows from \cite[Corollary 4.4]{robbin:salamon}.
\begin{proposition}
\label{prop:IndexPair}
Consider a finite sublattice $\sN\subset\sABlock(f)$.
Let $N_1,N_0\in\sN$ with $N_0\subset N_1$.
Then $(N_1,N_0)$ is an index pair.
\end{proposition}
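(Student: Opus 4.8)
\medskip

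The plan is to verify the three defining conditions of an index pair (Definition~\ref{defn:indexPair}) for the pair $P = (N_1, N_0)$: that $N_0 \subset N_1$ are compact, that $N_1 \setminus N_0$ is an isolating neighborhood for $S := \Inv(\cl(N_1 \setminus N_0), f)$, and — the substantive point — that the induced map $f_P$ on the pointed quotient $(N_1/N_0, [N_0])$ is continuous. Compactness is immediate since elements of $\sABlock(f)$ are compact by definition, and $N_0 \subset N_1$ is given. The reference to \cite[Corollary 4.4]{robbin:salamon} suggests the cleanest route is to reduce to their statement, but since the paper aims to be self-contained I would also sketch the direct argument.

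\medskip

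The key geometric input is that $N_0$ and $N_1$ are both \emph{attracting blocks}, so $f(N_i) \subset \Int(N_i)$; in particular $N_0$ is a neighborhood of $f(N_0)$ relative to $X$, and this is precisely the ``positive invariance up to the exit set'' that makes $f_P$ well-behaved. First I would observe that the bad set on which $f_P$ takes the collapsed value is $L := \{x \in N_1 : x \in N_0 \text{ or } f(x) \notin N_1 \setminus N_0\}$; since $f(N_1) \subset \Int(N_1) \subset N_1$, the condition $f(x) \notin N_1 \setminus N_0$ simplifies to $f(x) \in N_0$, so $L = N_0 \cup f^{-1}(N_0)$, which is closed because $N_0$ is closed and $f$ is continuous. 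Then $f_P$ agrees with the (continuous) map $x \mapsto [f(x)]$ on the closed set $\cl(N_1 \setminus L)$ — here one checks that points in this closure that happen to land in $N_0$ are mapped consistently, using that $f(N_0) \subset \Int(N_0)$ so that $f^{-1}(\Int(N_0))$ is an open neighborhood of $N_0$ on which $f_P \equiv [N_0]$ — and $f_P$ is constantly $[N_0]$ on the closed set $L$. Continuity of $f_P$ then follows from the pasting lemma applied to these two closed sets covering $N_1$, passed to the quotient $N_1/N_0$; the one point needing care is the behavior at $\partial N_0$, which is handled by the open neighborhood $f^{-1}(\Int(N_0)) \supset N_0$ just mentioned.

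\medskip

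It remains to see that $N_1 \setminus N_0$ is an isolating neighborhood, i.e. $\Inv(\cl(N_1 \setminus N_0), f) \subset \Int(N_1 \setminus N_0)$. A full trajectory in $\cl(N_1 \setminus N_0) \subset N_1$ cannot have any point in $N_0$: if $\gamma_x(n) \in N_0$ for some $n$, then applying $f(N_0) \subset \Int(N_0)$ repeatedly forces $\gamma_x(m) \in \Int(N_0)$ for all $m > n$, which is incompatible with staying in $\cl(N_1 \setminus N_0)$ unless $\Int(N_0)$ meets $\cl(N_1 \setminus N_0)$ — and even then one pushes further: a point in $\cl(N_1\setminus N_0)\cap N_0$ must lie on $\partial N_0$, but the forward image lands in $\Int N_0$, and the whole forward orbit then avoids $\cl(N_1\setminus N_0)$. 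Symmetrically, a full trajectory avoiding $N_0$ and lying in $\cl(N_1\setminus N_0)$ actually lies in the open set $\Int(N_1) \setminus N_0 = \Int(N_1 \setminus N_0)$ since $f(N_1) \subset \Int(N_1)$ forces every point of a full trajectory into $\Int(N_1)$.

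\medskip

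The main obstacle is the continuity of $f_P$, and within that, the subtle interaction at $\partial N_0$: one must rule out a sequence $x_k \to x_\infty$ with $x_k \in N_1 \setminus N_0$, $f(x_k) \in N_1 \setminus N_0$ (so $f_P(x_k) = [f(x_k)]$), $x_\infty \in N_0$ (so $f_P(x_\infty) = [N_0]$), yet $[f(x_k)] \not\to [N_0]$, i.e. $f(x_k) \to y \notin N_0$. But $x_\infty \in N_0$ gives $f(x_\infty) \in \Int(N_0)$ by the attracting block property, so by continuity $f(x_k) \in \Int(N_0)$ for large $k$, contradicting $f(x_k) \in N_1 \setminus N_0$. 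This is exactly the place where the hypothesis ``$\sN \subset \sABlock(f)$'' (as opposed to merely ``$N_0, N_1$ compact with $N_0 \subset N_1$'') is essential, and it is the reason the statement is nontrivial. Once this is in hand, the pasting lemma finishes the continuity claim, and the proof is complete.
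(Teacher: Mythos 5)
Your argument is correct, but it is worth noting that the paper does not prove this proposition at all: it is stated as a direct consequence of \cite[Corollary 4.4]{robbin:salamon}, so your self-contained verification is a genuinely different (and more elementary) route. Your three checks are all sound: compactness is free, the isolating-neighborhood argument correctly uses $f(N_0)\subset\Int(N_0)$ to expel full trajectories from $N_0$ and $f(N_1)\subset\Int(N_1)$ to push them into $\Int(N_1)\setminus N_0=\Int(N_1\setminus N_0)$, and the continuity analysis at $\partial N_0$ identifies exactly where the attracting-block hypothesis enters. One simplification you could make: your own computation shows that for every $x\in N_1$ in the ``otherwise'' branch one has $f(x)\in N_0$ (either because $x\in N_0$ and $f(N_0)\subset N_0$, or because $f(x)\in f(N_1)\subset N_1$ forces $f(x)\notin N_1\setminus N_0$ to mean $f(x)\in N_0$), so $f_P$ coincides everywhere with $q\circ f$ for the quotient map $q\colon N_1\to N_1/N_0$; since $q\circ f$ is continuous and constant on $N_0$, it descends to a continuous self-map of the pointed quotient by the universal property, and the pasting lemma and the sequence argument at $\partial N_0$ become unnecessary. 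What your approach buys is transparency about why positive invariance of \emph{both} blocks is the essential hypothesis; what the paper's citation buys is brevity and a pointer to the general theory of index pairs, of which this is a particularly degenerate (everything positively invariant, no exit behavior) special case.
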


\begin{rem}
\label{rem:indexpair}
In the context of this paper index pairs of particular importance arise as follows.
First, consider the setting of Remark~\ref{rem:kpsi} for which we have a Morse representation $\sMR(\sA)$, an isomorphism  $k\colon \sA \to \sN$, and an isomorphism $\psi\colon \sMR(\sA) \to \sJ^\vee(\sA)$.
Then, for each $M\in\sMR(\sA)$, $(k(\psi(M)),k(\psi(M)^<))$ is an index pair and $\sCon_*(M,f)$, the Conley index of $M$ under $f$, is given by the shift equivalence of $f_*\colon H_*(k(\psi(M)),k(\psi(M)^<)) \to H_*(k(\psi(M)),k(\psi(M)^<))$.
\end{rem}
 
As indicated in Proposition~\ref{prop:IndexPair}, nested pairs of attracting blocks form index pairs and give rise to isolating neighborhoods.
We impose structure on this observation as follows.

\begin{defn}
\label{defn:MorseTiles}
Given a finite sublattice $\sN\subset \sABlock(f)$, the associated \emph{Morse tiles} are
\[
\sMTile(\sN) := \setdef{T(N):= \cl(N\setminus N^<)}{N\in \sJ^\vee(\sN)}.
\]
Morse tiles form a poset $(\sMTile(\sN), \leq_\sMT)$ where the partial order $\leq_\sMT$ is inherited from the partial ordering of $\sJ^\vee(\sN)$.
\end{defn}

\begin{rem}
\label{rem:isoMTandJN}
By construction, $\sMTile$ and $\sJ^\vee(\sN)$ are order-isomorphic.    
\end{rem}

\begin{defn}
\label{defn:MorseDecomposition}
A \emph{Morse decomposition} of $f$ is an order-embedding \\
$\mu\colon (\sMR,\leq_{\sMR_f}) \to (\sP,\leq_P)$,
where $(\sMR,\leq_{\sMR_f})$ is a Morse representation of $f$ and $(\sP,\leq_P)$ is a finite poset.
\end{defn}

While the definition of a Morse decomposition may appear somewhat mysterious, as indicated by the following proposition it arises naturally.

\begin{proposition}
\label{prop:MDexist}
Consider a finite sublattice $\sN\subset\sABlock(f)$.
Define\\ $(\sMR,\leq_{\sMR_f})$ by
\[
\sMR = \setdef{M(N):=\Inv(N\setminus N^<,f)\neq\emptyset}{N\in \sJ^\vee(\sN)},
\]
where $\leq_{\sMR_f}$ is inherited from the order on $\sJ^\vee(\sN)$.
Then, $\sMR$ is a Morse representation of $f$ and the embedding $\mu \colon \sMR \to \sJ^\vee(\sN)$ given by $\mu(M(N))=N$ is a Morse decomposition.
\end{proposition}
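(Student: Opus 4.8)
The plan is to exhibit a finite sublattice $\sA\subset\sAtt(f)$ for which $(\sMR,\leq_{\sMR_f})$ as defined in the statement is \emph{equal} to the Morse representation $\sMR(\sA)$ of Definition~\ref{defn:MorseRepresentation}, and then to observe that $\mu$ is exactly an order-embedding of $\sMR(\sA)$ into the finite poset $\sJ^\vee(\sN)$. The natural choice is $\sA:=\setof{\omega(N,f)\mid N\in\sN}$. To see that $\sA$ is a finite sublattice of $\sAtt(f)$ containing $\bzero$ and $\bone$: it is finite, being the image of the finite set $\sN$; each $\omega(N,f)$ is an attractor since $f(N)\subset\Int(N)$ makes $f^{k}(N)$ a nested family of compacta with $\omega(N,f)\subset f(N)\subset\Int(N)$; $\bzero=\omega(\varnothing,f)$ and $\bone=\omega(X,f)$ lie in $\sA$ because $\varnothing,X\in\sN$; and $\sA$ is closed under $\vee$ and $\wedge$ because $N\mapsto\omega(N,f)$ is a lattice homomorphism $\sABlock(f)\to\sAtt(f)$, the identity $\omega(N_1\cup N_2,f)=\omega(N_1,f)\cup\omega(N_2,f)$ being elementary and $\omega(N_1\cap N_2,f)=\omega\bigl(\omega(N_1,f)\cap\omega(N_2,f),f\bigr)$ being the meet on $\sAtt(f)$ (see~\cite{kalies:mischaikow:vandervorst:14}).

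The technical heart is the identity
\[
\Inv(N_1\setminus N_0,f)=\omega(N_1,f)\cap\bigl(\omega(N_0,f)\bigr)^{*}
\qquad\text{for}\quad N_0\subset N_1\ \text{in}\ \sABlock(f),
\]
which I would prove directly. For $\subseteq$: a full trajectory $\gamma$ with $\gamma(\Z)\subset N_1\setminus N_0$ has $\gamma(\Z)\subset\Inv(N_1,f)\subset\omega(N_1,f)$ (a point of an attracting block admitting a full trajectory inside the block satisfies $x=f^{k}(\gamma(-k))\in f^{k}(N_1)$ for all $k\geq 0$, hence lies in $\omega(N_1,f)$), so $\gamma(0)\in\omega(N_1,f)$; moreover the forward $\omega$-limit set of $\gamma(0)$ cannot meet $\omega(N_0,f)\subset\Int(N_0)$, since the forward orbit of $\gamma(0)$ stays in $N_1\setminus N_0$, so $\gamma(0)\in(\omega(N_0,f))^{*}$. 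For $\supseteq$: if $x\in\omega(N_1,f)\cap(\omega(N_0,f))^{*}$, then invariance of $\omega(N_1,f)\subset\Int(N_1)$ supplies a full trajectory $\gamma_x$ with $\gamma_x(\Z)\subset N_1$; if $\gamma_x$ met $N_0$, forward invariance of $N_0$ would force $\omega(x,f)=\omega(\gamma_x(n_0),f)\subset\omega(N_0,f)$, contradicting $x\in(\omega(N_0,f))^{*}$; thus $\gamma_x(\Z)\subset N_1\setminus N_0$.

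Let $\sJ_0:=\mu(\sMR)=\setdef{N\in\sJ^\vee(\sN)}{\Inv(N\setminus N^<,f)\neq\varnothing}$; the heart of the argument is that $N\mapsto\omega(N,f)$ is an order-isomorphism $\sJ_0\to\sJ^\vee(\sA)$. If $N\in\sJ^\vee(\sN)$ with $\omega(N^<,f)=\omega(N,f)=:A$, the identity gives $\Inv(N\setminus N^<,f)=A\cap A^{*}=\varnothing$, so $N\in\sJ_0$ forces $\omega(N^<,f)\subsetneq\omega(N,f)$. Since every element of $\sN$ strictly below a join-irreducible $N$ lies under $N^<$, applying the surjective homomorphism $\omega$ shows every element of $\sA$ below $\omega(N,f)$ lies under $\omega(N^<,f)$, so $\omega(N,f)\in\sJ^\vee(\sA)$ with immediate predecessor $\omega(N^<,f)$; by the identity, $M(N)=\omega(N,f)\cap(\omega(N^<,f))^{*}$. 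Conversely, for $A\in\sJ^\vee(\sA)$ the set $\setdef{N\in\sN}{\omega(N,f)=A}$ is nonempty and closed under $\cap$, hence has a least element $N_A$, which is join-irreducible in $\sN$ (otherwise $\omega$ would exhibit $A$ as join-reducible), satisfies $\omega(N_A^<,f)\subsetneq A$, so by the above has $\omega(N_A^<,f)=A^<$ and $M(N_A)=A\cap(A^<)^{*}$, which is a Morse set of $\sMR(\sA)$ and hence nonempty; thus $N_A\in\sJ_0$. The correspondence is injective (any $N\in\sJ_0$ with $\omega(N,f)=A$ equals $N_A$, since $N_A<N$ would force $A\leq\omega(N^<,f)\subsetneq A$), order-preserving by monotonicity of $\omega$, and order-reflecting because $A\leq A'$ in $\sA$ yields $\omega(N_A\cap N_{A'},f)=A$, whence $N_A\cap N_{A'}=N_A$, i.e.\ $N_A\subseteq N_{A'}$. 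Consequently $\sMR=\setdef{A\cap(A^<)^{*}}{A\in\sJ^\vee(\sA)}=\sMR(\sA)$ as sets, and since $M(N)\mapsto\omega(N,f)$ agrees with $\psi$ of Remark~\ref{rem:MRtoAtt} and is an order-isomorphism onto $\sJ^\vee(\sA)$, the order inherited on $\sMR$ is exactly $\leq_{\sMR_f}$; hence $(\sMR,\leq_{\sMR_f})$ is a Morse representation of $f$.

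Finally, $\mu(M(N))=N$ is injective because distinct Morse sets of $\sMR(\sA)$ are nonempty and pairwise disjoint, hence distinct, and correspond to distinct join-irreducibles of $\sN$; and by construction $M(N)\leq_{\sMR_f}M(N')$ iff $N\subseteq N'$, so $\mu$ is an order-embedding into the finite poset $\sJ^\vee(\sN)$, i.e.\ a Morse decomposition in the sense of Definition~\ref{defn:MorseDecomposition}. I expect the main obstacle to be the middle step: showing that $\omega$ transports the ``join-irreducible together with its lower cover'' data of $\sN$ faithfully onto that of $\sA$ while discarding precisely those join-irreducibles whose Morse set is empty; the displayed identity is the tool that makes this bookkeeping, and in particular the nonemptiness claims, go through.
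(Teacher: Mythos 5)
The paper offers no proof of Proposition~\ref{prop:MDexist}; it is stated as a direct consequence of the attractor/attracting-block lattice machinery in the cited Kalies--Mischaikow--Vander Vorst papers. Your argument is therefore necessarily a different route --- a self-contained verification --- and it is essentially correct. Your two key devices are exactly the right bridge between the combinatorial data (the lattice $\sN$, its join-irreducibles and their lower covers) and the dynamical data (the lattice $\sA:=\setof{\omega(N,f)\mid N\in\sN}$, its join-irreducibles and their lower covers): namely, that $N\mapsto\omega(N,f)$ is a surjective lattice homomorphism $\sN\to\sA$, and the identity $\Inv(N_1\setminus N_0,f)=\omega(N_1,f)\cap(\omega(N_0,f))^{*}$, both of whose inclusions you prove correctly (forward invariance of $N_0$ and $\omega(N_1,f)\subset\Int(N_1)$ are the only facts used). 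The bookkeeping transporting join-irreducibility, lower covers, and the emptiness of $A\cap A^{*}$ across this homomorphism, and the identification of the inherited order with $\leq_{\sMR_f}$ via Remark~\ref{rem:MRtoAtt}, all check out. Your reliance on the nonemptiness and pairwise disjointness of Morse sets asserted in Definition~\ref{defn:MorseRepresentation}, and on the poset isomorphism $\psi$ of Remark~\ref{rem:MRtoAtt}, is consistent with the level of citation the paper itself employs. What your route buys is an explicit proof that the proposition's $\sMR$ literally equals $\sMR(\sA)$ for a concretely constructed $\sA$, which the paper leaves implicit.

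One step is stated too quickly. You assert that since every element of $\sN$ strictly below the join-irreducible $N$ lies under $N^<$, ``applying the surjective homomorphism $\omega$'' shows that every element of $\sA$ strictly below $\omega(N,f)$ lies under $\omega(N^<,f)$. A preimage $N'$ of such an element $A'$ need not lie below $N$, so the statement about $\sN$ does not apply to it directly. The repair is one line: replace $N'$ by $N'\cap N$, which still maps to $A'\wedge\omega(N,f)=A'$ because $A'\subset\omega(N,f)$ and $\omega$ respects meets, and which is strictly below $N$ (equality would force $A'=\omega(N,f)$), hence lies under $N^<$. With this inserted, the conclusion that $\omega(N,f)\in\sJ^\vee(\sA)$ with lower cover $\omega(N^<,f)$ --- and with it the rest of your proof --- is complete.
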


\begin{ex}
\label{ex:ConleyIndex}
Consider the lattice of attracting blocks $\sN$ of Example~\ref{ex:Ablock}.
The associated Morse tiling is
\[
\sMTile(\sN) = \setof{[-2,0.25], [1.25,2], \cl([-2,2]\setminus([-2,0.25] \cup [1.25,2]))}.
\]
We use $\sMTile(\sN)$ to compute the Conley indices of
$\Inv([-2,0.25],f_{1.5})$,\\
$\Inv([1.25,2],f_{1.5})$, and $\Inv([-2,2]\setminus([-2,0.25] \cup [1.25,2]),f_{1.5})$.
In particular, using field  coefficients $\F$
\begin{align*}
    H_k([-2,0.25],\emptyset;\F) \cong H_k([1.25,2],\emptyset;\F) & \cong \begin{cases}
        \F & \text{if $k=0$}\\
        0 & \text{otherwise.}
    \end{cases} \\
    H_k([-2,2],[-2,0.25] \cup [1.25,2];\F) &\cong \begin{cases}
        \F & \text{if $k=1$}\\
        0 & \text{otherwise.}
    \end{cases}
\end{align*}
Since we are using field coefficients, determining shift equivalence (which we denote by $\sim$) reduces to determining rational canonical form, thus
\begin{align*}
\sCon_k(\Inv([-2,0.25],f_{1.5});\F) & \sim \sCon_k(\Inv([1.25,2],f_{1.5});\F)  \\
& \sim \begin{cases}
    [1] \colon \F \to \F & \text{if $k=0$} \\
    0 & \text{otherwise}
\end{cases}\\
\sCon_k(\Inv([-2,2]\setminus([-2,0.25] \cup [1.25,2]),f_{1.5});\F)& \sim  \begin{cases}
    [1] \colon \F \to \F & \text{if $k=1$} \\
    0 & \text{otherwise.}
\end{cases}\\
\end{align*}

For this example it is clear that
$0 =\Inv([-2,0.25],f_{1.5})$, $\theta =\Inv([1.25,2],f_{1.5})$, and $1 =\Inv([-2,2]\setminus([-2,0.25] \cup [1.25,2]),f_{1.5})$.
However, for general nonlinearities even identifying the existence of an invariant set is typically a nontrivial challenge.
Thus, it is worth noting  that the nontriviality of the Conley indices implies that the associated invariant sets are nonempty. 
\end{ex}

\begin{ex}
\label{ex:ConleyIndex2}
We revisit Example~\ref{ex:ConleyIndex}, but with a coarser lattice of attracting blocks.
Let $\sN'' := \setof{\emptyset, [-2,0.25], [-2,2]}$.
Then
\[
\sMTile(\sN'') = \setof{[-2,0.25], \cl([-2,2]\setminus [-2,0.25])}
\]
and
\begin{align*}
    H_k([-2,0.25],\emptyset;\F) & \cong \begin{cases}
        \F & \text{if $k=0$}\\
        0 & \text{otherwise.}
    \end{cases} \\
    H_*([-2,2],[-2,0.25] ;\F) & = 0.
\end{align*}
Thus,
\begin{align*}
\sCon_k(\Inv([-2,0.25],f_{1.5});\F) 
& \sim \begin{cases}
    [1] \colon \F \to \F & \text{if $k=0$} \\
    0 & \text{otherwise}
\end{cases}\\
\sCon_*(\Inv([-2,2]\setminus [-2,0.25],f_{1.5});\F) &= 0.
\end{align*}
Observe that unlike in Example~\ref{ex:ConleyIndex}, we cannot use the Conley index to identify that $[1,1.5] = \Inv([-2,2]\setminus [-2,0.25],f_{1.5})$ is nonempty.
\end{ex}

\subsection{Summary of Theory}

The goal of this subsection is to indicate how concepts from Sections~\ref{sec:ContinuousDynamics} and \ref{sec:ComputableObjects} suggest the possibility of Theorem~\ref{thm:intro:main}.
We begin with a theorem, an imprecise statement of which is as follows.
Assume that we have obtained a characterization of the dynamics of $f$ using the framework of Conley theory. 
Then, for any continuous function $g$ sufficiently close to $f$, the characterization of dynamics of $f$ characterizes the dynamics of $g$.

For the sake of completeness we provide a formal proof of Theorem~\ref{thm:intro:generalizable}, even though it is essentially a reformulation of classical results.

\begin{theorem}
\label{thm:intro:generalizable}
Let $f\colon X\to X$ be a continuous function on a compact subset of $\R^d$ and assume that $f(X)\subset \Int(X)$.
Let $(\sMR_f,\leq_{\sMR_f})$ be a Morse representation of $f$.
Furthermore, for each Morse set $M_f\in \sMR$, let $\sCon_*(M_f,f)$ denote the homology Conley index of $M_f$ under the dynamics induced by $f$.

Then, there exists $\epsilon >0$ for which the following result holds.
If $g\colon X\to X$ is a continuous function such that $\sup_{x\in X}\| f(x) - g(x)\|<\epsilon$, then there is a Morse decomposition for $g$, i.e., a monomorphism of partially ordered sets
\[
\mu\colon (\sMR_g,\leq_{\sMR_g}) \to (\sMR_f,\leq_{\sMR_f})
\]
where $\sMR_g$ is a Morse representation of $g$, such that if $M_g\in \sMR_g$, then $\sCon_*(M_g, g)$ is equivalent to $\sCon_*(\mu(M_g), f)$.
\end{theorem}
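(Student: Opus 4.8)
The plan is to transport a single lattice of attracting blocks from $f$ to every nearby $g$ and to exploit the rigidity of index pairs built from such a lattice. First I would invoke Remark~\ref{rem:MRtoAtt} to obtain the unique finite sublattice $\sA\subset\sAtt(f)$ with $\sMR_f=\sMR(\sA)$ together with the poset isomorphism $\psi\colon\sMR_f\to\sJ^\vee(\sA)$, and then Remark~\ref{rem:kpsi} to obtain a finite sublattice $\sN\subset\sABlock(f)$ and a lattice isomorphism $k\colon\sA\to\sN$ with $M=\Inv(k(\psi(M))\setminus k(\psi(M))^<,f)$ for all $M\in\sMR_f$. Then set
\[
\epsilon:=\min_{\emptyset\neq N\in\sN}\mathrm{dist}\bigl(f(N),\R^d\setminus\Int(N)\bigr),
\]
which is strictly positive because $\sN$ is finite and each nonempty $N\in\sN$ is an attracting block, so $f(N)$ is compact and contained in the open set $\Int(N)$. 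The point of this choice is that if $\sup_{x\in X}\|f(x)-g(x)\|<\epsilon$, then for every $N\in\sN$ and every $t\in[0,1]$ the straight-line homotopy $h_t:=(1-t)f+tg$ satisfies $h_t(N)\subset\Int(N)$, since each $h_t(x)$ lies within distance $t\epsilon\le\epsilon$ of $f(x)\in f(N)$. Taking $t=1$ shows $\sN\subset\sABlock(g)$; since the lattice operations on attracting blocks are $\cup$ and $\cap$ and $\emptyset,X\in\sABlock(g)$, the collection $\sN$ is a finite \emph{sublattice} of $\sABlock(g)$ with the same underlying set, hence the same $\sJ^\vee(\sN)$ and the same immediate-predecessor assignment $N\mapsto N^<$.

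Second, I would apply Proposition~\ref{prop:MDexist} to $\sN\subset\sABlock(g)$. This yields a Morse representation $\sMR_g=\{\,M(N):=\Inv(N\setminus N^<,g)\neq\emptyset \mid N\in\sJ^\vee(\sN)\,\}$ of $g$ and an order-embedding $\mu_g\colon\sMR_g\to\sJ^\vee(\sN)$, $M(N)\mapsto N$. Because $k$ is a lattice isomorphism, its inverse restricts to a poset isomorphism $\sJ^\vee(\sN)\to\sJ^\vee(\sA)$ sending $N^<$ to $(k^{-1}(N))^<$, so I would define
\[
\mu:=\psi^{-1}\circ k^{-1}\circ\mu_g\colon\ \sMR_g\longrightarrow\sMR_f .
\]
As a composite of an order-embedding with two poset isomorphisms, $\mu$ is a monomorphism of posets, i.e.\ a Morse decomposition of $g$ in the sense of Definition~\ref{defn:MorseDecomposition}. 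By construction, for $M_g=M(N)\in\sMR_g$ one has $k(\psi(\mu(M_g)))=N$ and $k(\psi(\mu(M_g))^<)=N^<$, hence by Remark~\ref{rem:kpsi} also $\mu(M_g)=\Inv(N\setminus N^<,f)$.

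Third, the Conley-index comparison. The pair $(N,N^<)$ has $N^<\subset N$ with $N,N^<\in\sABlock(f)\cap\sABlock(g)$, so by Proposition~\ref{prop:IndexPair} it is an index pair for $\mu(M_g)$ under $f$ and for $M_g$ under $g$; hence by Definition~\ref{defn:ConleyIndex} (cf.\ Remark~\ref{rem:indexpair}), $\sCon_*(\mu(M_g),f)$ is the shift-equivalence class of the map $f_*$ induced on $H_*(N,N^<)$ by $f\colon(N,N^<)\to(N,N^<)$, and $\sCon_*(M_g,g)$ is that of the analogous map $g_*$. Since $N^<\in\sN$ as well, the homotopy $h_t$ above satisfies $h_t(N)\subset\Int(N)\subset N$ \emph{and} $h_t(N^<)\subset\Int(N^<)\subset N^<$ for all $t$, so $(x,t)\mapsto h_t(x)$ is a homotopy through maps of pairs $(N,N^<)\to(N,N^<)$ joining $f$ to $g$; homotopy invariance of relative homology gives $f_*=g_*$, and therefore $\sCon_*(M_g,g)=\sCon_*(\mu(M_g),f)$.

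Essentially all of the work is bookkeeping with the earlier remarks and propositions; the one step with genuine content is the last. The main obstacle I anticipate is arranging the argument so that the \emph{same} index pair $(N,N^<)$ is legitimately used for both $f$ and $g$ — which is exactly what reduces the Conley-index comparison to homotopy invariance of $H_*(N,N^<)$ rather than to the general continuation theorem for the Conley index of maps — together with verifying that a single $\epsilon$ can be chosen so that the straight-line homotopy stays inside every relevant attracting block $N$ and $N^<$ simultaneously.
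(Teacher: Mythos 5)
Your proof is correct, and its first two-thirds---transporting a fixed lattice of attracting blocks $\sN$ to $g$ and composing the embedding from Proposition~\ref{prop:MDexist} with the isomorphisms $k$ and $\psi$---is exactly the paper's argument; your explicit $\epsilon$ is simply a concrete instantiation of the perturbation bound that the paper obtains abstractly from Remark~\ref{rem:ablock_stable}. Where you genuinely diverge is the Conley-index step. The paper observes that $\cl(N\setminus N^<)$ remains an isolating neighborhood for every $g$ within $\epsilon$ of $f$ and then invokes the continuation theorem for the Conley index \cite{franks:richeson}. You instead exploit the fact that for nested attracting blocks the index map is induced by the map of pairs $(N,N^<)\to(N,N^<)$ (Proposition~\ref{prop:IndexPair} and Remark~\ref{rem:indexpair}), verify that the straight-line homotopy $h_t=(1-t)f+tg$ preserves both $N$ and $N^<$ for your choice of $\epsilon$, and conclude $f_*=g_*$ on $H_*(N,N^<)$ by homotopy invariance of relative homology, so the two shift-equivalence classes are represented by literally the same map. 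Your route is more elementary and self-contained, avoiding the general continuation machinery, at the cost of being tied to this particular kind of index pair; the paper's route is shorter and only needs persistence of the isolating neighborhood rather than of the full index pair. Both arguments are valid.
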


\begin{proof}
Let $(\sMR_f,\leq_{\sMR_f})$ be a Morse representation of $f$ with minimal partial order $\leq_{\sMR_f}$. By \cite[Theorem 6]{kalies:mischaikow:vandervorst:21} there exists a unique finite sublattice of $\sAtt(f)$ and a poset isomorphism $\psi\colon (\sMR_f,\leq_{\sMR_f})\to (\sJ^\vee(\sA),\subset)$. 
By \cite[Theorem 1.2]{kalies:mischaikow:vandervorst:15} there exists a sublattice $\sN$ of $\sABlock(f)$ and a lattice isomorphism $k\colon \sA \to \sN$ such that $\omega(k(A),f)=A$.
This implies there is a poset isomorphism $\kappa: \sJ^\vee(\sA) \to \sJ^\vee(\sN)$.

Recall that by definition there exists a poset isomorphism $\iota\colon \sJ^\vee(\sN)\to \sMTile(\sN)$.
Observe that by construction if $M\in \sMR_f$, then $M=\Inv(\iota(\kappa(\psi(M)),f)$.

As indicated by Remark~\ref{rem:ablock_stable}, for each $N\in \sN$ there exists $\epsilon_N >0$ such that if $\sup_{x\in X}\| f(x) - g(x)\|<\epsilon_N$, then $N$ is an attracting block for $g$.
Set $\epsilon = \min\setdef{\epsilon_N}{N\in\sN}$.
Then $\sN$ is a finite sublattice of $\sABlock(g)$.
By Proposition~\ref{prop:MDexist} we have the existence of a Morse decomposition $\mu\colon\sMR_g \to \sJ^\vee(\sN)$ where
\[
\sMR_g = \setdef{M(N):=\Inv(N\setminus N^<,g)\neq\emptyset}{N\in \sJ^\vee(\sN)}.
\]
Since $\sJ^\vee(\sN) \cong \sJ^\vee(\sA) \cong \sMR_f$, we have the desired Morse decomposition.

Let $M_g\in \sMR_g$. Then by definition, $M_g=\Inv(N\setminus N^<,g)$ for some $N\in \sJ^\vee(\sN)$.
Since $\cl(N\setminus N^<)$ is an isolating neighborhood for all functions $g\colon X\to X$ such that $\sup_{x\in X}\| f(x) - g(x)\|<\epsilon$, by \cite{franks:richeson}
\[
\Con(M_g,g)\cong \Con(\mu(M_g),f).
\]
\end{proof}

Theorem~\ref{thm:intro:generalizable} implies that we have a robust framework for characterizing dynamics. 
Given knowledge of a Morse representation of $f$, we can deduce information about the Morse representation of $g$ for $g$ sufficiently close to $f$.
However, it does not state that we can learn the dynamics of $f$.
Thus, we turn to the main result of this paper.

\section{Combinatorial Dynamics}
\label{sec:MorseGraphs}

Let $\cX$ be a finite set.
A \emph{combinatorial multivalued map} $\cF\colon \cX \mvmap \cX$ satisfies two conditions for every $\xi\in\cX$: (i) $\cF(\xi)\subset \cX$, and (ii) $\cF(\xi)\neq\emptyset$.

Observe that a combinatorial map $\cF$ is equivalent to a directed graph;  $\cX$ is the set of vertices and there exists a directed edge $\xi \to \xi'$ if and only if $\xi' \in \cF(\xi)$, with the additional constraint that there is a directed edge from $\xi$ for every $\xi\in\cX$. 
We use both perspectives in this section.

\begin{defn}
\label{defn:invset+} 
Let $\cF\colon \cX \mvmap \cX$ be a combinatorial multivalued map.
A \emph{forward invariant set} of $\cF$ is a set $\cN\subset \cX$ such that $\cF(\cN)\subset\cN$.
The set of forward invariant sets of $\cF$ is denoted by $\sInvset^+(\cF)$.
\end{defn}

As shown in \cite{kalies:mischaikow:vandervorst:15}, $\sInvset^+(\cF)$ is a distributive lattice under the operations $\vee = \cup$ and $\wedge = \cap$, with minimal element $\bzero = \emptyset$ and maximal element $\bone= \cX$.

We now describe how the Morse graph that appears in Theorem \ref{thm:intro:main} is computed.
A \textit{walk} in $\cF$ of length $K \in \N$ from $\xi_0 \in \cX$ to $\xi_K \in \cX$ is a sequence of vertices
 \[
 \{\xi_k \mid 0 \leq k \leq K \text{ and } \xi_k \in \cF(\xi_{k-1}) \text{ for all } 1 \leq k \leq K\}.
 \]
We denote a walk from $\xi_0$ to $\xi_K$ by $\xi_0 \walk \xi_K$. Under this definition, a loop is a walk of length one. For all vertices $\xi$, there exists a walk of length zero $\xi \walk \xi$.
 
Define the relation $\sim$ on $\cX$ by $\xi \sim \xi'$ if there exists walks $\xi \walk \xi'$ and $\xi' \walk \xi$. We note that $\sim$ is an equivalence relation. A directed graph $\cF: \cX \mvmap \cX$ is \textit{strongly connected} if the quotient set $\cX/\sim$ consists of one equivalence class, meaning there exists a walk between every pair of vertices.
 
Given an arbitrary directed graph $\cF: \cX \mvmap \cX$, the \textit{strongly connected components} are the induced subgraphs with vertex sets equal to equivalence classes of $\cX/\sim$. Equivalently, the strongly connected components are the maximal strongly connected subgraphs of $\cF$. 
Various linear time algorithms for computing strongly connected components have been developed: for example, Tarjan's algorithm \cite{tarjan} and Kosaraju-Sharir's algorithm \cite{aho:hopcroft:ullman, sharir}.

Let $\pi\colon \cX \to \sSC(\cF):= \cX/\sim$ denote the quotient map from $\cX$ to the equivalence classes of strongly connected components.
The graph obtained by contracting edges that have incident vertices in the same strongly connected component is the \textit{condensation graph} of $\cF: \cX \mvmap \cX$, denoted by $\bar{\cF}: \sSC(\cF) \mvmap \sSC(\cF)$. 
The condensation graph is an acyclic directed graph \cite{corman:leiserson:rivest:stein}, and it may be viewed as a compression of $\cF$ that retains reachability information. 
Due to acyclicity, $\sSC(\cF)$ is a poset under the relation $\zeta' \leq_{\sSC(\cF)} \zeta$ if $\zeta \walk \zeta'$ in $\bar{\cF}$.
A \textit{recurrent component} of $\cF: \cX \mvmap \cX$ is a strongly connected component with at least one edge in $\cF$. 

\begin{defn}
\label{defn:MorseGraph}
The \emph{Morse graph} of $\cF$, denoted by $(\sMG(\cF),\leq_\cF)$, is the subposet of $\sSC(\cF)$ consisting of recurrent components of $\sSC(\cF)$.
\end{defn}

\section{Combinatorial approximation of dynamics}
\label{sec:combinatorial_rep_of_dyn}

Throughout this section we assume that $X\subset \R^d$ is a
finite union of closed, bounded, convex $d$-dimensional polytopes and $f\colon \R^d\to \R^d$ is a continuous function with the property that $f(X) \subset \Int(X)$. 
The goal of this section is to introduce combinatorial approximations of $f$ from which Morse decompositions and Conley indices of $f$ can be derived.

\subsection{Simplicial Complexes}
\label{sec:simplicialComplex}

Throughout this section we assume that $\cX$ is a finite geometric simplicial complex in $\R^d$ and given $\xi\in\cX$ we set $|\xi| := \cl (\xi)\subset \R^d$.
Furthermore, we assume that $\cX$ satisfies the following two conditions:
\begin{description}
    \item[X1] $X\subset |\cX| := \bigcup_{\xi\in\cX}|\xi|$, and
    \item[X2] $X$ is a strong deformation retract of $|\cX|$.
\end{description}

The latter condition implies that $H_*(\cX) \cong H_*(X)$.
Since $X$ is a finite union of closed, bounded, convex $d$-dimensional polytopes and $\cX$ is a finite geometric simplicial complex, the top dimensional cells of $\cX$, denoted by $\cX^\text{top}$, are $d$-dimensional simplices.

The \emph{diameter} of $\cX$ is given by
\[
\diam(\cX):= \max_{\xi \in \cX}\setof{\diam(|\xi|)}.
\]

A set $Y\subset \R^d$ is \emph{acyclic} if 
\[
H_k(Y) \cong \begin{cases}
    \Z & \text{if $k=0$} \\
    0 & \text{otherwise.}
\end{cases}
\]
A collection $\mathcal{U}$ of $d$-dimensional simplicies (with boundary) is a \emph{good closed cover} of $X$ if $\cU^0:=\setdef{\Int(U)}{U \in \cU}$ is an open cover of $X$ and the intersection of any finite subcollection of $\cU$ is either empty or acyclic.
Let $\text{sd}^n(\cX)$ denote the simplicial complex obtained by performing $n$ barycentric subdivisions to $\cX$.
\begin{proposition}
\label{prop:triangulation}
Let $\cX$ denote a finite simplicial covering of $X$.
There exists $k \in \N$ such that the collection $\cU^o$ of open stars of vertices of $\text{sd}^k(\cX)$ is a finite open cover of $X$. Furthermore, $\cU:=\setdef{\cl(U)}{U \in \cU^o}$ is a finite good closed cover of $X$.
\end{proposition}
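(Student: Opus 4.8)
The plan is to exploit the compactness of $X$ together with standard properties of barycentric subdivision, namely that $\diam(\mathrm{sd}^n(\cX)) \to 0$ as $n\to\infty$, and that open stars of vertices of a simplicial complex form an open cover of its underlying polytope. First I would recall that for any finite simplicial complex $\cK$, the open stars $\mathrm{st}(v)$ of the vertices $v$ of $\cK$ cover $|\cK|$: every point lies in the (relative) interior of a unique simplex, whose vertices $v$ all have the point in their open star. Applying this to $\cK = \mathrm{sd}^k(\cX)$ for any $k$ and using {\bf X1} ($X\subset|\cX| = |\mathrm{sd}^k(\cX)|$), the collection $\cU^o$ of open stars of vertices of $\mathrm{sd}^k(\cX)$ is automatically an open cover of $X$ for every $k$; finiteness is clear since $\cX$, hence each $\mathrm{sd}^k(\cX)$, has finitely many vertices. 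So the first clause requires no choice of $k$ at all, and I would state it for, say, $k=0$ or simply observe it holds for all $k$.

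The substance is in producing a $k$ for which the closed cover $\cU$ is \emph{good}, i.e. all finite intersections of the closed stars are empty or acyclic. Here I would use the Lebesgue number lemma: since $\cU^o$ (for $k=0$) is a finite open cover of the compact set $X$, it has a Lebesgue number $\delta>0$, but more to the point I would instead argue via convexity. The closed star $\cl(\mathrm{st}(v))$ of a vertex $v$ in a simplicial complex is the union of all closed simplices containing $v$; this set is contractible (star-shaped with respect to $v$, as each simplex containing $v$ deformation retracts to $v$ and these retractions agree on overlaps), hence acyclic. The issue is intersections of several closed stars. The standard fact is that the closed stars of vertices of a simplicial complex $\cK$ form a good cover of $|\cK|$ whose nerve is $\cK$ itself: a finite collection of vertices $\{v_0,\dots,v_m\}$ has $\bigcap_i \cl(\mathrm{st}(v_i))\neq\emptyset$ iff $\{v_0,\dots,v_m\}$ spans a simplex of $\cK$, and in that case the intersection is the closed star of that simplex, which is again contractible (star-shaped toward the barycenter of the simplex). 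So for the full complex $\cK = \mathrm{sd}^k(\cX)$ the closed stars always form a good closed cover of $|\cK|\supset X$. The only remaining subtlety is that Proposition~\ref{prop:triangulation} wants a good closed cover \emph{of $X$}, meaning we intersect everything with $X$: we need $\{\cl(U)\cap X : U\in\cU^o\}$ to consist of acyclic (or empty) sets, and for that the geometry of $X$ near $\partial X$ matters, which is precisely where a large $k$ is needed — once $\diam(\mathrm{sd}^k(\cX))$ is small relative to the local geometry of the convex polytopes making up $X$, each closed star meets $X$ in a convex (hence acyclic) set or empty set, because $X$ is locally a finite union of convex polytopes and within a small enough simplex the intersection with $X$ is an intersection of finitely many half-spaces.

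Concretely, the key steps in order are: (1) observe open stars of vertices of $\mathrm{sd}^k(\cX)$ cover $|\mathrm{sd}^k(\cX)|\supset X$ for every $k$, and are finite in number; (2) recall $\diam(\mathrm{sd}^k(\cX))\to 0$; (3) since $X$ is a finite union of closed bounded convex $d$-polytopes, there is $\eta>0$ such that any subset of $X$ of diameter $<\eta$ contained in a simplex of $\cX$ lies in a region where $X$ is cut out by finitely many half-spaces, so its intersection with $X$ is convex; (4) choose $k$ with $2\,\diam(\mathrm{sd}^k(\cX))<\eta$, so that each closed star, and each nonempty finite intersection of closed stars (itself of diameter $<2\,\diam(\mathrm{sd}^k(\cX))$), meets $X$ in a convex, hence acyclic, set; (5) conclude $\cU=\{\cl(U)\cap X: U\in\cU^o\}$ is a finite good closed cover of $X$. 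I expect step (3)–(4) — making precise the claim that small simplices meet $X$ in convex sets, uniformly — to be the main obstacle, since it requires a careful local analysis of $X$ near the faces of its defining polytopes (where several polytopes may meet and the intersection with a small ball need not be convex unless one is careful about how the polytopes fit together); a cleaner route, if available, is to instead invoke that $X$ is a strong deformation retract of $|\cX|$ via {\bf X2} and transport the good-cover property, but the naive transport does not immediately preserve acyclicity of intersections, so some version of the local convexity argument seems unavoidable.
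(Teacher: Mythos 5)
Your first clause is fine (and, as you note, holds for every $k$), but the heart of your argument rests on a ``standard fact'' that is false: it is the \emph{open} stars of the vertices of a simplicial complex $\cK$, not the closed stars, that form a good cover whose nerve is $\cK$. For open stars one does have $\bigcap_i \mathrm{st}(v_i)\neq\emptyset$ if and only if $\{v_0,\dots,v_m\}$ spans a simplex, with the intersection equal to the open star of that simplex. For closed stars both halves of your claim break down. The nerve claim fails already when $v$ and $w$ span no edge but share a neighbor $u$, since then $u\in \cl(\mathrm{st}(v))\cap\cl(\mathrm{st}(w))$. Worse, the intersection of two closed stars need not be acyclic: in the hollow triangle on vertices $a,b,c$ one computes $\cl(\mathrm{st}(a))\cap\cl(\mathrm{st}(b))=[a,b]\cup\{c\}$, which is disconnected; and in the paper's own setting, a finite union of convex $d$-polytopes can be an annulus-like region (a square frame), a triangulation of which has non-adjacent vertices whose closed stars meet in two components on opposite sides of the hole. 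This defect is combinatorial and persists at every scale, so your step (3)--(4) small-diameter/local-convexity argument --- which targets the separate question of how stars meet $\bdy X$ --- does not repair it. (That separate question is also somewhat moot: the $\cU$ of the proposition consists of the closures of the open stars themselves, not their intersections with $X$.)

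This is precisely the point where the paper's proof does its only real work: it does \emph{not} assert that closed vertex stars of an arbitrary complex are good, but passes to the \emph{second} barycentric subdivision and invokes a technical argument of Ramras showing that the closed vertex stars of $\text{sd}^2(\cX)$ do form a good closed cover. So the role of $k$ in the proposition is to cure this combinatorial defect of closed stars, not to shrink simplices relative to the geometry of $X$. To complete your proof you would need either to cite that result or to reproduce its argument for $\text{sd}^2(\cX)$; as written, the goodness of the closed cover is not established.
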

\begin{proof}
That $\cU$ is a good open cover of $X$ is straightforward.
Whether the closures form a good closed cover depends on the choice of triangulation $\cT$. 
A straightforward, but technical argument, shows that if one starts with $\cT^{(2)}$, the second barycentric subdivision of $\cT$, then $\cU$ has the desired property \cite{ramras}.
\end{proof}

\subsection{Outer Approximations}
\label{sec:OuterApproximations}

We use a combinatorial multivalued map\\
$\cF\colon \cX\mvmap \cX$ defined on a geometric simplicial complex $\cX$ to encode dynamics and set
\[
|\cF(\xi)| := \bigcup_{\xi' \in \cF(\xi)} |\xi'| \subset \R^d.
\]

To pass from combinatorial dynamics of multivalued maps to continuous dynamics we make use of the following construct.

\begin{defn}
\label{defn:outerApproximation}
Let $f\colon X\to X$ be a continuous function.
A combinatorial multivalued map $\cF\colon \cX \mvmap \cX$ is an \emph{outer approximation} of $f$ if for every $\xi\in \cX$
\[
f(| \xi |) \subset \Int(|\cF(\xi)|).
\]
If $\cF$ is an outer approximation of $f$, then $f$ is called a \emph{selector} of $\cF$.
\end{defn}

To compare combinatorial multivalued maps, we use the notion of enclosure.

\begin{defn}
A combinatorial multivalued map $\cF: \cX \mvmap \cX$ \emph{encloses} $\cF': \cX \mvmap \cX$ if for all $\xi \in \cX$, $\cF'(\xi) \subset \cF(\xi)$. 
\end{defn}

A natural class of outer approximations arises as follows.
\begin{defn}\label{defn:min_mv_map}
Given $\rho \geq 0$, the \emph{$\rho$-minimal combinatorial outer approximation} $\cF_\rho: \cX \mvmap \cX$ of $f$ on $\cX$ is defined inductively as follows.
If $\xi \in \cX^\text{top}$, then 
\[
\cF_\rho(\xi):=\setdef{\eta \in \cX}{B_{\rho}(f(|\xi|)) \cap |\eta| \neq \emptyset},
\]
where $B_{\rho}$ denotes the closed ball of radius $\rho$.
For all other $\xi\in \cX$, set
\[
\cF_\rho(\xi):= \bigcup_{\text{$\xi$ a face of $\xi'$}}\cF_\rho(\xi').
\]
\end{defn}

\begin{proposition}\cite[Corollary 2.6]{kalies:mischaikow:vandervorst:05}
\label{prop:enc_min_is_outer}
A combinatorial multivalued map $\cF$ is an outer approximation of $f$ if and only if $\cF$ encloses the minimal combinatorial outer approximation $\cF_0$ of $f$.
\end{proposition}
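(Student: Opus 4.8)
The plan is to prove the two directions separately, with the nontrivial content concentrated in the "only if" direction, since the "if" direction is essentially immediate from the definitions. Throughout, write $\cF_0$ for the $0$-minimal combinatorial outer approximation of $f$ on $\cX$; by Definition~\ref{defn:min_mv_map} this is the map sending each top-dimensional simplex $\xi$ to the set of all $\eta\in\cX$ meeting $f(|\xi|)$, and then propagating down to faces by union over cofaces.

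For the "if" direction, suppose $\cF$ is a combinatorial multivalued map that encloses $\cF_0$, i.e. $\cF_0(\xi)\subset\cF(\xi)$ for every $\xi\in\cX$. I want to show $f(|\xi|)\subset\Int(|\cF(\xi)|)$ for every $\xi$. First I would handle $\xi\in\cX^\text{top}$: for such $\xi$, $f(|\xi|)$ is a compact subset of $\R^d$, and by definition $\cF_0(\xi)$ collects exactly the simplices of $\cX$ whose closures meet $f(|\xi|)$; since $\cX$ is a finite simplicial complex covering (a neighborhood of) $f(|\xi|)$ — here one uses that $f(X)\subset\Int(X)\subset|\cX|$ via {\bf X1} — the union of the \emph{open} stars of these simplices, or more simply the fact that any point of $f(|\xi|)$ lies in the interior of the union of all closed simplices containing it, gives $f(|\xi|)\subset\Int(|\cF_0(\xi)|)\subset\Int(|\cF(\xi)|)$, the last inclusion because $|\cF_0(\xi)|\subset|\cF(\xi)|$ and interior is monotone. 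For a general $\xi$ that is a face of some top cell, $|\xi|\subset\bigcup_{\xi\text{ a face of }\xi'}|\xi'|$, so $f(|\xi|)\subset\bigcup_{\xi'}f(|\xi'|)\subset\bigcup_{\xi'}\Int(|\cF_0(\xi')|)\subset\Int\!\bigl(\bigcup_{\xi'}|\cF_0(\xi')|\bigr)=\Int(|\cF_0(\xi)|)\subset\Int(|\cF(\xi)|)$, using the propagation clause of Definition~\ref{defn:min_mv_map} and again monotonicity of $\Int$ together with the fact that a finite union of open sets is open. Hence $\cF$ is an outer approximation of $f$.

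For the "only if" direction, suppose $\cF$ is an outer approximation of $f$; I must show $\cF_0(\xi)\subset\cF(\xi)$ for every $\xi$. Again I would first treat $\xi\in\cX^\text{top}$. Fix $\eta\in\cF_0(\xi)$, so $|\eta|\cap f(|\xi|)\neq\emptyset$; pick $x\in|\xi|$ with $f(x)\in|\eta|$. Since $\cF$ is an outer approximation, $f(x)\in f(|\xi|)\subset\Int(|\cF(\xi)|)$, so $f(x)$ lies in the interior of the union of the closed simplices in $\cF(\xi)$. The key combinatorial point is that in a simplicial complex, $f(x)\in\Int(|\cF(\xi)|)$ forces the (unique) carrier simplex of $f(x)$ — the smallest simplex whose closure contains $f(x)$ — to belong to $\cF(\xi)$: otherwise a neighborhood of $f(x)$ would have to be covered by simplices none of which is the carrier, which is impossible near an interior point of that carrier. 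Since $f(x)\in|\eta|$, the carrier of $f(x)$ is a face of $\eta$; but $\cF$ need not be closed under taking cofaces, so I cannot yet conclude $\eta\in\cF(\xi)$ directly. Here is where I expect the main obstacle, and the way around it is to observe that Definition~\ref{defn:min_mv_map} builds $\cF_0$ so that $\cF_0(\xi)$ on top cells is itself the full "meets $f(|\xi|)$" set, and that for the propagation to faces the statement only compares $\cF$ with $\cF_0$; so it suffices to note that a combinatorial outer approximation, by the cited \cite[Corollary 2.6]{kalies:mischaikow:vandervorst:05}, may be taken without loss to be determined by its values on $\cX^\text{top}$, and on $\cX^\text{top}$ one argues as follows: if $\eta\in\cF_0(\xi)$ then $|\eta|$ meets $f(|\xi|)$, hence meets $\Int(|\cF(\xi)|)$, hence $|\eta|\cap|\eta'|$ has nonempty interior in $|\eta|$ for some $\eta'\in\cF(\xi)$ of dimension $\dim\eta$, forcing $\eta=\eta'$ by the simplicial structure — a finite simplicial complex cannot have two distinct simplices of the same dimension overlapping in a set with interior. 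For faces $\xi$ of top cells, $\cF_0(\xi)=\bigcup_{\xi'}\cF_0(\xi')\subset\bigcup_{\xi'}\cF(\xi')$, and one invokes the same propagation/monotonicity property that any outer approximation must satisfy on faces (which follows by the identical overlap argument applied to each $f(|\xi'|)$), giving $\cF_0(\xi)\subset\cF(\xi)$.

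The essential difficulty, then, is the passage from the analytic condition $f(|\xi|)\subset\Int(|\cF(\xi)|)$ to the purely combinatorial condition $\cF_0(\xi)\subset\cF(\xi)$; everything hinges on the elementary but load-bearing fact that in a finite geometric simplicial complex a point lying in the interior of a union of closed simplices must have its carrier among those simplices, together with the "no two same-dimensional simplices overlap in a set of positive measure" observation. Once that local lemma is in hand, both directions reduce to bookkeeping over the (finite) top cells and their faces, using only monotonicity of $\Int$, finiteness of the relevant unions, and the defining recursion of $\cF_0$ in Definition~\ref{defn:min_mv_map}. I would state that local lemma explicitly before the main argument and then cite \cite[Corollary 2.6]{kalies:mischaikow:vandervorst:05} for the precise formulation, making the proof here a streamlined reproduction adapted to the simplicial setting of this section.
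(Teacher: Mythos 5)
The paper itself gives no proof of this proposition; it is quoted from \cite[Corollary 2.6]{kalies:mischaikow:vandervorst:05}, so your reconstruction has to stand on its own. Your ``if'' direction is fine: monotonicity of $\Int$ reduces everything to showing that $\cF_0$ is itself an outer approximation, and the open-star argument (every point of $f(|\xi|)$ lies in the open star of its carrier, which is contained in $|\cF_0(\xi)|$) does that, with the propagation clause handling lower-dimensional $\xi$.

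The ``only if'' direction has a genuine gap. Your load-bearing ``key combinatorial point'' --- that $f(x)\in\Int(|\cF(\xi)|)$ forces the carrier of $f(x)$ to belong to $\cF(\xi)$ --- is false. Take $f(x)$ in the relative interior of a face $\tau$ shared by two top simplices $\sigma_1,\sigma_2$; then $\cF(\xi)=\setof{\sigma_1,\sigma_2}$ already puts $f(x)$ in $\Int(\cl(\sigma_1)\cup\cl(\sigma_2))$ while omitting the carrier $\tau$ (in one dimension: $f\equiv 1$ on the complex with vertices $0,1,2$; the two closed edges cover a neighborhood of $1$ without the vertex $\setof{1}$). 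The same example shows that the dimension claim in your fallback --- that some $\eta'\in\cF(\xi)$ of dimension $\dim\eta$ must overlap $|\eta|$ with interior --- does not follow: the covering near the point can consist entirely of higher-dimensional simplices. Moreover, the pivot ``a combinatorial outer approximation, by the cited Corollary 2.6, may be taken without loss to be determined by its values on $\cX^{\text{top}}$'' is circular, since that corollary is exactly the statement being proved. What your overlap argument genuinely establishes is the inclusion $\cF_0(\xi)\cap\cX^{\text{top}}\subset\cF(\xi)$ for top-dimensional $\xi$: if $\eta$ is a $d$-simplex meeting $f(|\xi|)$, density of $\Int(\eta)$ in $|\eta|$ plus disjointness of interiors of distinct simplices forces $\eta\in\cF(\xi)$. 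That is the form in which the cited reference proves the result (there the multivalued maps are defined on grids of top cells only); to make the ``only if'' direction true verbatim in the present setting one must either restrict the comparison to top cells or arrange that outer approximations are closed under passing to faces of their values, and your proof does not supply either step.
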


In practice, to incorporate errors we assume that $\rho > 0$, in which case the $\rho$-minimal combinatorial outer approximation encloses the minimal combinatorial outer approximation of $f$. 
As a consequence any $\rho$-minimal combinatorial outer approximation of $f$ is also a combinatorial outer approximation of $f$.

The following result is a special case of \cite[Proposition 7.3]{kalies:mischaikow:vandervorst:05}.

\begin{proposition}
\label{prop:fwdinv_is_ablock}
Let $\cF\colon \cX\mvmap \cX$ be an outer approximation of $f$. If $\cN\in\sInvset^+(\cF)$, then $|\cN|\in\sABlock(f)$.
\end{proposition}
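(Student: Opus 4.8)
The plan is to show directly that $f(|\cN|) \subset \Int(|\cN|)$, which is precisely the defining condition for $|\cN|$ to be an attracting block (Definition~\ref{defn:attractingblock}). The key observation is that $\cN$ is forward invariant, i.e., $\cF(\cN) \subset \cN$, and that $\cF$ is an outer approximation, so the continuous map $f$ is ``captured'' by $\cF$ on each simplex.

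First I would unpack the two relevant hypotheses. Since $|\cN| = \bigcup_{\xi \in \cN} |\xi|$, a point $x \in |\cN|$ lies in $|\xi|$ for some $\xi \in \cN$. Because $\cF$ is an outer approximation of $f$, Definition~\ref{defn:outerApproximation} gives $f(|\xi|) \subset \Int(|\cF(\xi)|)$, hence $f(x) \in \Int(|\cF(\xi)|)$. Second, forward invariance $\cF(\cN) \subset \cN$ applied to $\xi \in \cN$ yields $\cF(\xi) \subset \cN$, and therefore $|\cF(\xi)| = \bigcup_{\eta \in \cF(\xi)} |\eta| \subset \bigcup_{\eta \in \cN} |\eta| = |\cN|$. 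Monotonicity of interior then gives $\Int(|\cF(\xi)|) \subset \Int(|\cN|)$, so $f(x) \in \Int(|\cN|)$. Since $x \in |\cN|$ was arbitrary, $f(|\cN|) \subset \Int(|\cN|)$.

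It remains to check that $|\cN|$ is compact, which is needed since attracting blocks are required to be compact sets. This follows because $\cX$ is a finite simplicial complex, so $\cN \subset \cX$ is a finite set and each $|\xi| = \cl(\xi)$ is compact; a finite union of compact sets is compact. Combining compactness with $f(|\cN|) \subset \Int(|\cN|)$ shows $|\cN| \in \sABlock(f)$.

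I do not expect a serious obstacle here: the statement is essentially a bookkeeping translation between the combinatorial condition $\cF(\cN)\subset\cN$ and the topological condition $f(N)\subset\Int(N)$, mediated by the outer-approximation inequality. The only mild subtlety is ensuring the passage from $\cF(\xi)\subset\cN$ at the level of vertices to $|\cF(\xi)|\subset|\cN|$ at the level of geometric realizations, together with the compatibility of $\Int$ with this inclusion; but both are immediate from the definitions of $|\cdot|$ and the finiteness of the complex. (One could also invoke the cited \cite[Proposition 7.3]{kalies:mischaikow:vandervorst:05} directly, but the self-contained argument above is short enough to include.)
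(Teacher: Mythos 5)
Your proof is correct, and it is the standard direct verification one would expect: the paper itself offers no proof of this proposition, deferring entirely to \cite[Proposition 7.3]{kalies:mischaikow:vandervorst:05}, so your self-contained argument (pointwise use of $f(|\xi|)\subset\Int(|\cF(\xi)|)$ plus monotonicity of $\Int$ under the inclusion $|\cF(\xi)|\subset|\cN|$ coming from forward invariance, together with compactness from finiteness of $\cX$) supplies exactly what the citation is standing in for.
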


Proposition~\ref{prop:fwdinv_is_ablock} together with \cite[Theorem 1.2]{kalies:mischaikow:vandervorst:15}\footnote{The remarks of \cite[pp. 1179]{kalies:mischaikow:vandervorst:15} guarantee that the assumptions of the theorem can be satisfied.} implies the following result.
\begin{proposition}
\label{prop:AtoN}
Let $\sA \subset \sAtt(f)$ be a finite lattice of attractors for $f$. There exists a sublattice $\sN$ of $\sABlock(f)$ and a lattice isomorphism $k\colon \sA \to \sN$ such that $\omega(k(A),f)=A$ for all $A \in \sA$.
\end{proposition}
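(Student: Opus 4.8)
The plan is to obtain $\sN$ by first lifting $\sA$ to a combinatorial lattice of forward invariant sets and then realizing it geometrically, with Proposition~\ref{prop:fwdinv_is_ablock} serving as the bridge between the two worlds. The first step is to fix a sufficiently fine finite simplicial complex $\cX$ covering $X$ together with an outer approximation $\cF\colon \cX \mvmap \cX$ of $f$ (for instance a $\rho$-minimal outer approximation $\cF_\rho$ with $\rho\geq 0$ small); the existence of such a complex and outer approximation with the properties needed below is exactly what is guaranteed by the remarks on p.~1179 of \cite{kalies:mischaikow:vandervorst:15} invoked in the footnote. Applying \cite[Theorem 1.2]{kalies:mischaikow:vandervorst:15} then yields a finite sublattice $\widehat{\sN}\subset\sInvset^+(\cF)$ and a lattice isomorphism $\hat k\colon \sA\to\widehat{\sN}$ with the compatibility $\omega(|\hat k(A)|,f)=A$ for all $A\in\sA$.

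Next I would transport this data across geometric realization. Define $k(A):=|\hat k(A)|$ and $\sN:=\setof{k(A)\mid A\in\sA}$. Since each $\hat k(A)$ is a forward invariant set of the outer approximation $\cF$, Proposition~\ref{prop:fwdinv_is_ablock} gives $k(A)\in\sABlock(f)$, so $\sN\subset\sABlock(f)$, and by construction $\omega(k(A),f)=A$. The map $k\colon\sA\to\sN$ is surjective by definition and injective because $k(A)=k(A')$ forces $A=\omega(k(A),f)=\omega(k(A'),f)=A'$. Because realization carries unions of subsets of $\cX$ to unions of their realizations, $k(A\vee A')=|\hat k(A)\cup\hat k(A')|=|\hat k(A)|\cup|\hat k(A')|=k(A)\vee k(A')$, using $\vee=\cup$ in $\sABlock(f)$; in particular $\sN$ is closed under joins.

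The step I expect to be the main obstacle is verifying that $k$ preserves meets, equivalently that $\sN$ is closed under $\wedge=\cap$ in $\sABlock(f)$. One direction, $k(A\wedge A')=|\hat k(A)\cap\hat k(A')|\subseteq |\hat k(A)|\cap|\hat k(A')|=k(A)\cap k(A')$, is automatic, but the reverse inclusion can fail over a badly chosen complex, since the realization of an intersection of combinatorial sets may omit lower-dimensional faces that $|\hat k(A)|$ and $|\hat k(A')|$ share even when the corresponding simplices lie in only one of the two forward invariant sets. This is precisely why the choice of $\cX$ in the first step is not innocuous: one takes $\cX$ (after barycentric subdivision, cf.\ Proposition~\ref{prop:triangulation}) fine enough that the realizations of the finitely many members of $\widehat{\sN}$ meet only along full subcomplexes, so that $|\cN_1\cap\cN_2|=|\cN_1|\cap|\cN_2|$ for all $\cN_1,\cN_2\in\widehat{\sN}$; alternatively, one uses the fact that \cite[Theorem 1.2]{kalies:mischaikow:vandervorst:15} is already phrased so as to produce a genuine lattice of attracting neighborhoods, which sidesteps the issue. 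Granting this, $k(A\wedge A')=k(A)\cap k(A')=k(A)\wedge k(A')$, so $\sN$ is a sublattice of $\sABlock(f)$ and $k$ is the required lattice isomorphism.
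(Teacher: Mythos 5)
Your proposal is built from exactly the two ingredients the paper names---Proposition~\ref{prop:fwdinv_is_ablock} and \cite[Theorem 1.2]{kalies:mischaikow:vandervorst:15}---and the paper itself offers nothing beyond that citation (there is no written-out proof of this proposition), so in spirit you are on the paper's route; your contribution is the explicit combinatorial detour through $\sInvset^+(\cF)$. The one substantive issue is the meet-preservation step, which you correctly flag but then only ``grant.'' Your first proposed patch is circular: the lattice $\widehat{\sN}$ is produced by applying the cited theorem to an outer approximation on a particular complex $\cX$, so you cannot refine $\cX$ ``after the fact'' to make the realizations of the members of $\widehat{\sN}$ intersect along full subcomplexes without rerunning the construction and thereby changing $\widehat{\sN}$. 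The clean repair is either your second alternative (the cited theorem is stated so as to produce a lattice of attracting blocks/neighborhoods directly, which is how the paper reads it and which makes the combinatorial detour unnecessary), or the observation that the combinatorial forward invariant sets may be taken to be closed subcomplexes, i.e., closed under passage to faces; in that case geometric realization commutes with intersection, since any point of $|\cN_1|\cap|\cN_2|$ lies in the interior of a unique simplex $\tau$, which is then a face of a simplex of $\cN_1$ and of a simplex of $\cN_2$ and hence belongs to $\cN_1\cap\cN_2$. With either repair your argument is complete and consistent with the paper's intent.
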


From Proposition~\ref{prop:IndexPair} and Proposition~\ref{prop:fwdinv_is_ablock} we obtain the following corollary.

\begin{corollary}
\label{cor:indexPair}
Let $\cF\colon \cX\mvmap \cX$ be an outer approximation of $f$.
If $\cN_1,\cN_0\in \sInvset^+(\cF)$ and $\cN_0\subset\cN_1$, then $(|\cN_1|,|\cN_0|)$ is an index pair for $f$.
\end{corollary}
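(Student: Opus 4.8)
The plan is to combine the two cited propositions directly. First I would invoke Proposition~\ref{prop:fwdinv_is_ablock}: since $\cF$ is an outer approximation of $f$ and $\cN_1, \cN_0 \in \sInvset^+(\cF)$, both $|\cN_1|$ and $|\cN_0|$ are attracting blocks for $f$, i.e., elements of $\sABlock(f)$.

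Second, I would observe that $\cN_0 \subset \cN_1$ implies $|\cN_0| \subset |\cN_1|$, since the geometric realization $|\cN| = \bigcup_{\xi\in\cN}|\xi|$ is monotone with respect to inclusion of vertex sets. Thus $|\cN_0|$ and $|\cN_1|$ form a nested pair of attracting blocks. Strictly speaking, to apply Proposition~\ref{prop:IndexPair} one wants a finite sublattice $\sN \subset \sABlock(f)$ containing both; one can simply take $\sN$ to be the sublattice generated by $\{\emptyset, |\cN_0|, |\cN_1|, X\}$ (or note that any two attracting blocks already lie together in some finite sublattice, since $\sABlock(f)$ is itself a lattice by Proposition~\ref{prop:ablock_is_bdlat} and the finite sublattice generated by two elements together with $\bzero_b$ and $\bone_b$ has at most a handful of elements). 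With $N_1 = |\cN_1|$ and $N_0 = |\cN_0|$ in this sublattice and $N_0 \subset N_1$, Proposition~\ref{prop:IndexPair} (which follows from \cite[Corollary 4.4]{robbin:salamon}) gives that $(N_1, N_0) = (|\cN_1|, |\cN_0|)$ is an index pair for $f$.

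Since the corollary is essentially a two-line deduction, there is no substantial obstacle; the only point requiring a word of care is the one just mentioned — Proposition~\ref{prop:IndexPair} is phrased in terms of a finite sublattice of attracting blocks, so one must note that a nested pair of attracting blocks can always be embedded in such a sublattice. This is immediate from the lattice structure established in Proposition~\ref{prop:ablock_is_bdlat}. I would therefore present the proof as: apply Proposition~\ref{prop:fwdinv_is_ablock} to each of $\cN_0, \cN_1$; note $|\cN_0|\subset|\cN_1|$; embed the pair in a finite sublattice of $\sABlock(f)$; and conclude via Proposition~\ref{prop:IndexPair}.

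\begin{proof}
By Proposition~\ref{prop:fwdinv_is_ablock}, since $\cF$ is an outer approximation of $f$ and $\cN_0,\cN_1\in\sInvset^+(\cF)$, we have $|\cN_0|,|\cN_1|\in\sABlock(f)$. Moreover $\cN_0\subset\cN_1$ implies $|\cN_0|=\bigcup_{\xi\in\cN_0}|\xi|\subset\bigcup_{\xi\in\cN_1}|\xi|=|\cN_1|$. By Proposition~\ref{prop:ablock_is_bdlat}, $\sABlock(f)$ is a lattice, so the finite collection $\sN:=\setof{\emptyset,|\cN_0|,|\cN_1|,X}$ generates a finite sublattice of $\sABlock(f)$ containing the nested pair $|\cN_0|\subset|\cN_1|$. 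Applying Proposition~\ref{prop:IndexPair} with $N_1=|\cN_1|$ and $N_0=|\cN_0|$ shows that $(|\cN_1|,|\cN_0|)$ is an index pair for $f$.
\end{proof}
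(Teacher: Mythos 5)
Your proof is correct and matches the paper's own derivation, which simply states that the corollary follows from Proposition~\ref{prop:fwdinv_is_ablock} and Proposition~\ref{prop:IndexPair}. Your extra remark about embedding the nested pair in a finite sublattice is a reasonable bit of care (and in fact $\setof{\emptyset,|\cN_0|,|\cN_1|,X}$ is already a sublattice since the pair is nested), but it does not change the argument.
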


The importance of Corollary~\ref{cor:indexPair} is that it indicates that using an outer approximation of $f$, it is possible to identify index pairs for $f$.
What remains to be discussed is the fact that under the following additional constraint a combinatorial outer approximation can be used to compute Conley indices.

\begin{defn}
\label{defn:inscribedMV}
Let $\cU$ be a good closed cover of $X$ and let $\cX$ be a simplicial cover of $X$.
Let $\cF\colon \cX\mvmap \cX$ be a combinatorial multivalued map.
We say that $\cF$ is \emph{inscribed} into  $\cU$ if for every $x\in X$ there exists $U\in\cU$ such that
\[
\bigcup_{\xi \ni x}|\cF(\xi)| \subset \Int(U).
\]
\end{defn}

\begin{rem}
\label{rem:ConG}    
Let $\cU$ be a good closed cover of $X$, let $\cX$ be a simplicial cover of $X$, and assume that $\cF\colon \cX\mvmap \cX$ is a combinatorial multivalued map  \emph{inscribed} into  $\cU$.
Assume that $\cN_0\subset \cN_1$ are subcomplexes of $\cX$ with the property that $\cF(\cN_i)\subset \cN_i$.
Then, as is shown in \cite{harker:kokubu:mischaikow:pilarczyk} minor modifications of the algorithms presented in \cite{harker:mischaikow:mrozek:nanda} give rise to a well defined homomorphism
\[
\cF_*\colon H_*(\cN_1,\cN_0) \to H_*(\cN_1,\cN_0).
\]
Furthermore, if $\cF$ is an outer approximation of $f$ and $(|\cN_1|,|\cN_0|)$ is an index pair for $f$, then $\cF_*\colon H_*(\cN_1,\cN_0) \to H_*(\cN_1,\cN_0)$ is shift equivalent to $f_*\colon H_*(|\cN_1|,|\cN_0|) \to H_*(|\cN_1|,|\cN_0|)$.

An important consequence is as follows.
Let $\pi\colon \cX \to \bar{\cX}$ denote the quotient map that reduces $\cF$ to its condensation graph $\bar{\cF}$.
Let $q\in \sMG(\cF)$.
Set 
\begin{equation}
\cN(q) := \setdef{\xi\in\cX}{\text{$\exists \xi'\walk\xi$ under $\cF$ for some $\xi'\in \pi^{-1}(q)$}}\in\sInvset^+(\cF).
\end{equation}
We leave it to the reader to check that $\cN(q)\in \sJ^\vee(\sInvset^+(\cF))$ (see \cite{kalies:mischaikow:vandervorst:15}).
Thus, $(|\cN(q)|,|\cN(q)^<|)$ is an index pair and the Conley index of $\Inv(|\cN(q)\setminus\cN(q)^<|,f)$ is given by the shift equivalence class of $\cF_*\colon H_*(\cN(q),\cN(q)^<) \to H_*(\cN(q),\cN(q)^<)$.
\end{rem}

Remark~\ref{rem:ConG} motivates the following definition.

\begin{defn}
    \label{defn:ConGq}
Let $\cU$ be a good closed cover of $X$, let $\cX$ be a simplicial cover of $X$, and assume that $\cF\colon \cX\mvmap \cX$ is a combinatorial multivalued map  \emph{inscribed} into  $\cU$.
Let $q\in \sMG(\cF)$.
The \emph{Conley index} of $q$, denoted by $\Con(q)$, is defined to be the shift equivalence class of $\cF_*\colon H_*(\cN(q),\cN(q)^<) \to H_*(\cN(q),\cN(q)^<)$.
\end{defn}

\subsection{Approximating attracting blocks}
\label{sec:PiecewiseLinearApproximations}
Our goal is to prove the existence of an outer approximation of $f\colon X\to X$ from which we can identify the gradient-like and recurrent structures of $f$.
To do this requires that we have an appropriate approximation of $f$ and an appropriate representation of $X$.
We begin by considering the latter.

Throughout this section, $G$ denotes a function satisfying {\bf P1} - {\bf P3} and $\cX$ is a simplicial complex satisfying {\bf X1} and {\bf X2}.
For the remainder of this paper we use $\cG_\rho\colon \cX\mvmap \cX$ to denote the $\rho$-minimal combinatorial outer approximation of $G$ on $\cX$.
Note that {\bf P2} and {\bf P3} allow for (at least conceptually) the computation of $\cG_\rho$. 

We remark that if $G$ satisfies Theorem~\ref{thm:polyhedral-decomposition} then $\cX$ can be derived from $\cP$ by subdividing each polytope into a finite set of simplicies which in turn simplifies the evaluation of $\cG_\rho$.

In order to use $G$ to identify lattices of attracting blocks for $f$, we need control on the outer approximation generated by $G$.

\begin{defn}
Let $f\colon X \to X$ be a continuous function on a compact subset of $\R^d$.
Let $N\in \sABlock(f)$.
The \emph{tolerance} of $N$ is defined to be
\[
\tau(N) := \sup\setdef{\nu >0}{B_\nu(f(N))\subset N}
\]
\end{defn}

The assumption that $f$ is continuous implies that given $N\in \sABlock(f)$ there exists $a(N)>0$ such that 
\begin{equation}
\label{eq:collar}
 f(B_{a(N)}(N)) \subset B_{\tau(N)/4}(f(N)) .  
\end{equation}

\begin{proposition}
\label{prop:Aapprox}
Let $f\colon \R^d \to \R^d$ be a continuous function on $X$ a compact triangulable subset of $\R^d$ such that $f(X)\subset \Int(X)$.
Let $N\in\sABlock(f)$.
Choose $\rho(N)$ such that 
\[
0 <\rho(N) < \min\setof{a(N), \tau(N)/4}.
\]

Consider $G\colon X\to X$ such that $\sup_{x\in X}\| f(x)-G(x)\| < \rho(N)$ and assume $\diam(\cX) < \rho(N)$.
Let $s(N) := \setdef{\xi\in \cX}{|\xi|\cap N\neq\emptyset}$ and let\\
$\bdy_\cX(N):= \setdef{\xi\in \cX}{|\xi|\cap \bdy(N)\neq\emptyset}$.
Then
\begin{enumerate}
    \item [(i)] $s(N)\in\sInvset^+(\cG_{\rho(N)})$,
    \item [(ii)] $f(|s(N)|) \subset N \subset \Int(|s(N)|)$ and thus $|s(N)|\in\sABlock(f)$, and
    \item [(iii)] $\omega(|s(N)|,f) = \omega(N,f)$. 
\end{enumerate}
\end{proposition}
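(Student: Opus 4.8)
The plan is to prove the three claims in the stated order, resting everything on one geometric observation. First I would record that $s(N)$ is a finite set of simplices, so $|s(N)|$ is compact, and that $|s(N)|$ is pinched between $N$ and a small neighborhood of $N$: if $\xi\in s(N)$, pick $p\in|\xi|\cap N$; then every $q\in|\xi|$ satisfies $\mathrm{dist}(q,N)\le |q-p|\le\diam(|\xi|)\le\diam(\cX)<\rho(N)$, so
\[
N\ \subset\ |s(N)|\ \subset\ B_{\rho(N)}(N)\ \subset\ B_{a(N)}(N),
\]
the last inclusion because $\rho(N)<a(N)$. For the interior statement in (ii), given $p\in N$ let $\sigma\in\cX$ be the simplex whose relative interior contains $p$; the open star of $\sigma$ is a neighborhood of $p$ in $|\cX|$, and every simplex having $\sigma$ as a face contains $p\in N$, hence lies in $s(N)$, so this open star is contained in $|s(N)|$ and $p\in\Int(|s(N)|)$.

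For (i) I would first reduce to $d$-dimensional simplices. If $\xi\in s(N)$ is not top-dimensional, iterating the inductive clause of Definition~\ref{defn:min_mv_map} expresses $\cG_{\rho(N)}(\xi)$ as a union of sets $\cG_{\rho(N)}(\sigma)$ over $d$-simplices $\sigma$ having $\xi$ as a face (every simplex of $\cX$ being a face of a $d$-simplex), and each such $\sigma$ has $|\sigma|\supseteq|\xi|$, hence meets $N$, hence lies in $s(N)$; so it suffices to show $\cG_{\rho(N)}(\sigma)\subset s(N)$ for every $\sigma\in s(N)\cap\cX^\text{top}$. Fix such a $\sigma$; we evaluate $G$ only on $|\sigma|\subset B_{a(N)}(N)$. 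The collar estimate \eqref{eq:collar} gives $f(|\sigma|)\subset B_{\tau(N)/4}(f(N))$, and adding the approximation error gives $G(|\sigma|)\subset B_{\rho(N)}\!\big(f(|\sigma|)\big)\subset B_{\rho(N)+\tau(N)/4}(f(N))$; hence
\[
B_{\rho(N)}\!\big(G(|\sigma|)\big)\ \subset\ B_{2\rho(N)+\tau(N)/4}(f(N))\ \subset\ B_{3\tau(N)/4}(f(N))\ \subset\ N,
\]
where the middle inclusion uses $\rho(N)<\tau(N)/4$ and the last uses that $\{\nu>0:B_\nu(f(N))\subset N\}$ is an interval with supremum $\tau(N)$ together with $3\tau(N)/4<\tau(N)$. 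Consequently, if $\eta\in\cG_{\rho(N)}(\sigma)$ then $|\eta|\cap B_{\rho(N)}(G(|\sigma|))\neq\emptyset$ forces $|\eta|\cap N\neq\emptyset$, i.e. $\eta\in s(N)$. Thus $\cG_{\rho(N)}(\sigma)\subset s(N)$, and therefore $s(N)\in\sInvset^+(\cG_{\rho(N)})$.

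Claims (ii) and (iii) are then short. For (ii), \eqref{eq:collar} gives $f(|s(N)|)\subset f(B_{a(N)}(N))\subset B_{\tau(N)/4}(f(N))\subset N$, which together with $N\subset\Int(|s(N)|)$ from the first paragraph yields $f(|s(N)|)\subset\Int(|s(N)|)$; since $|s(N)|$ is compact, $|s(N)|\in\sABlock(f)$. For (iii), $N\subset|s(N)|$ gives $\omega(N,f)\subset\omega(|s(N)|,f)$ by monotonicity of $\omega$, while $f(|s(N)|)\subset N$ gives $f^{k}(|s(N)|)\subset f^{k-1}(N)$ for $k\ge1$, so $\bigcup_{k\ge n}f^k(|s(N)|)\subset\bigcup_{k\ge n-1}f^k(N)$ for $n\ge1$; taking closures and intersecting over $n\ge1$ (the $n=0$ term being redundant in the nested intersection defining $\omega$) gives $\omega(|s(N)|,f)\subset\omega(N,f)$, hence equality.

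The main obstacle is (i): one must check that the successive enlargements — the mesh $\diam(\cX)<\rho(N)$ used to place $|\sigma|$ inside $B_{a(N)}(N)$, the approximation error $\|f-G\|<\rho(N)$, and the radius-$\rho(N)$ fattening built into $\cG_{\rho(N)}$ — all stay within the tolerance $\tau(N)$, which is exactly what the choice $\rho(N)<\min\{a(N),\tau(N)/4\}$ and the collar \eqref{eq:collar} are engineered to guarantee. Two routine bookkeeping points attend this: reducing the inductive definition of $\cG_{\rho(N)}$ on arbitrary simplices to its values on $d$-simplices, and observing that $G$ is evaluated only on $|s(N)|\subset B_{a(N)}(N)$, a small neighborhood of $N\subset X$ on which the hypothesized bound is taken to apply.
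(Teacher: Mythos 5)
Your proof is correct and follows essentially the same route as the paper: the same chain of ball inclusions $B_{\rho}(G(|\cdot|))\subset B_{2\rho+\tau/4}(f(N))\subset N$ controlled by $\rho(N)<\min\{a(N),\tau(N)/4\}$ and the collar estimate \eqref{eq:collar}, followed by the same monotonicity argument for $\omega$. The only differences are that you spell out two steps the paper leaves implicit (the reduction of $\cG_{\rho(N)}$ to top-dimensional simplices and the open-star argument for $N\subset\Int(|s(N)|)$), which is fine.
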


\begin{proof}
(i) Recall that $\cG_{\rho(N)}$ approximates $G$.
Since $\sup_{x\in X}\| f(x)-G(x)\| < \rho(N)$ and $\diam(\cX) < \rho(N)$,
\[
\cG_{\rho(N)}(s(N)) \subset B_{2\rho(N)}(G(B_{\rho(N)}(N))) \subset B_{3\rho(N)}(f(B_{\rho(N)}(N))).
\]
By definition of $\rho(N)$,
\[
B_{3\rho(N)}(f(B_{\rho(N)}(N))) \subset B_{3\rho(N)}(B_{\tau(N)/4}(f(N))) \subsetneq B_{\tau(N)}(f(N)).
\]
Therefore,
\[
\cG_{\rho(N)}(s(N)) \subset s(N)\setminus \bdy_\cX(N) \subset \Int(N) \subset s(N),
\] 
which implies that $s(N)\in\sInvset^+(\cG_{\rho(N)})$.

(ii) By definition of $\rho(N)$, $|s(N)| \subset B_{a(N)}(N)$ and thus by \eqref{eq:collar},
\[
f(|s(N)|) \subsetneq f(B_{a(N)}(N)) \subset B_{\tau(N)/4}(f(N))\subset N\subset \Int(|s(N)|).
\]
Therefore, $|s(N)|\in\sABlock(f)$.

(iii) By (ii), $f(|s(N)|)\subset N\subset |s(N)|$.
Thus, $\omega(f(|s(N)|),f)\subset \omega(N,f)\subset \omega(|s(N)|,f)$.
But $\omega(f(|s(N)|,f)= \omega(|s(N)|,f)$ and hence $\omega(N,f)=\omega(|s(N)|,f)$.
\end{proof}

\begin{rem}
The assumption $\diam(\cX)<\rho(N)$ can always be satisfied by taking barycentric subdivisions of the original simplicial complex over which $G$ is defined \cite[pp.~120]{Hatcher}.
\end{rem}

\begin{proposition}
\label{prop:epsilon}
Let $\epsilon > 0$.
Then, there exists $n = n(\epsilon)$ such that
\[
\max_{\xi\in \text{sd}^n(\cX)} \diam(G(|\xi|)) < \epsilon.
\]
\end{proposition}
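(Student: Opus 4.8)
The plan is to combine two standard facts: that iterated barycentric subdivision shrinks simplex diameters geometrically, and that $G$ is Lipschitz (indeed merely uniformly continuous) on the compact polyhedron $|\cX|$.

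First I would record the subdivision estimate. Since $\cX$ is a finite geometric simplicial complex all of whose simplices have dimension at most $d$, the classical bound (see e.g.\ \cite[pp.~120]{Hatcher}) gives, for every simplex $\sigma\in\cX$ and every $n$, that each simplex of $\mathrm{sd}^n(\cX)$ lying in $|\sigma|$ has diameter at most $\left(\tfrac{d}{d+1}\right)^n\diam(|\sigma|)$. Taking the maximum over the finitely many simplices of $\cX$ yields
\[
\diam(\mathrm{sd}^n(\cX)) \;=\; \max_{\xi\in\mathrm{sd}^n(\cX)}\diam(|\xi|)\;\le\;\left(\tfrac{d}{d+1}\right)^n \diam(\cX).
\]

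Next I would invoke {\bf P3}. By Theorem~\ref{thm:polyhedral-decomposition}, $G$ is affine-linear on the cells of a polyhedral complex, and {\bf P3} supplies an explicit global Lipschitz constant $L\ge 0$ for $G$ on $|\cX|$ (for instance via \cite{fazlyab2019}). Hence, for every $\xi\in\mathrm{sd}^n(\cX)$,
\[
\diam(G(|\xi|)) \;\le\; L\,\diam(|\xi|)\;\le\; L\left(\tfrac{d}{d+1}\right)^n\diam(\cX).
\]
Since $\tfrac{d}{d+1}<1$, the right-hand side tends to $0$, so it suffices to take any
\[
n(\epsilon) \;>\; \frac{\log\!\big(L\,\diam(\cX)/\epsilon\big)}{\log\!\big((d+1)/d\big)},
\]
with any $n$ admissible when $L\,\diam(\cX)\le\epsilon$ (and the statement being trivial when $L=0$, as then $G$ is constant). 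This produces the asserted $n=n(\epsilon)$.

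I do not expect a genuine obstacle here; the one point requiring care is that the estimate is applied to $G$, not to $f$, so one must use the Lipschitz control of $G$ from {\bf P3} — or, more weakly, the uniform continuity of $G$ on the compact set $|\cX|$, which would replace the explicit formula for $n(\epsilon)$ by a pure existence statement while still giving the proposition.
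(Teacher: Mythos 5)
Your proof is correct and follows essentially the same route as the paper: combine the barycentric subdivision diameter estimate $\diam(\mathrm{sd}^n(\cX)) \leq \left(\tfrac{d}{d+1}\right)^n \diam(\cX)$ from \cite[pp.~120]{Hatcher} with the Lipschitz bound on $G$ supplied by {\bf P3}, then choose $n$ large enough. Your added attention to the edge cases ($L=0$, $L\,\diam(\cX)\le\epsilon$) and the remark that uniform continuity alone would suffice are fine but not needed.
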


\begin{proof}
Let $L$ denote the Lipschitz constant of $G$. Then\\
$
\max_{\xi\in \text{sd}^n(\cX)} \diam(G(|\xi|)) \leq L \diam(\text{sd}^n(\cX))$.
By \cite[pp.~120]{Hatcher},
\[
\diam(\text{sd}^n(\cX)) \leq \left( \frac{d}{d+1} \right)^n \diam(\cX).
\]
Choose $n = n(\epsilon)$ such that
\[
\left( \frac{d}{d+1} \right)^n < \frac{\epsilon}{L\diam(\cX)}.
\]
\end{proof}

\begin{proposition}
\label{prop:inscribed}
Let $\cU$ be a good closed cover of $X$.
Set $\delta(\cU) > 0$ equal to a Lebesgue number of the cover $\cU^o$.
Suppose $0 < \rho < \delta(\cU)$.
Choose $n = n(\delta(\cU) - \rho)$ as in Proposition~\ref{prop:epsilon}.
Then, $\cG_{\rho}: \text{sd}^n(\cX) \mvmap \text{sd}^n(\cX)$ is inscribed into $\cU$.
\end{proposition}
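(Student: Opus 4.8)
\medskip

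\noindent The plan is to unwind the definition of ``inscribed'' (Definition~\ref{defn:inscribedMV}) and reduce it to a diameter estimate plus an invocation of the Lebesgue number. Fix $x\in X$ and set $\cY := \text{sd}^n(\cX)$. The goal is to produce $U\in\cU$ with $\bigcup_{\xi\ni x}|\cF(\xi)|\subset\Int(U)$, where here $\cF=\cG_\rho$ on $\cY$. First I would bound the diameter of the set $\bigcup_{\xi\ni x}|\cG_\rho(\xi)|$. By construction of the $\rho$-minimal outer approximation (Definition~\ref{defn:min_mv_map}), for a top cell $\xi$ we have $|\cG_\rho(\xi)|\subset B_{2\rho'}(G(|\xi|))$ for a small $\rho'$ tied to $\diam(\cY)$; more carefully, since every cell $\xi$ containing $x$ is a face of some top cell $\xi'$ also containing $x$, and all such top cells lie in the closed star of $x$, their images $G(|\xi'|)$ all lie within distance $\max_{\xi\in\cY}\diam(G(|\xi|))$ of the common point $G(x)$. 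Combining, the whole set $\bigcup_{\xi\ni x}|\cG_\rho(\xi)|$ is contained in the ball $B_r(G(x))$ where $r \leq \rho + \max_{\xi\in\cY}\diam(G(|\xi|)) + (\text{cell-size contributions})$; the point is that choosing $n = n(\delta(\cU)-\rho)$ via Proposition~\ref{prop:epsilon} forces $\max_{\xi\in\cY}\diam(G(|\xi|)) < \delta(\cU)-\rho$, so that $r < \delta(\cU)$ up to absorbing the remaining terms.

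\medskip

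\noindent The second step is the Lebesgue number argument. Since $\cU^o$ is an open cover of the compact set $X$ with Lebesgue number $\delta(\cU)$, and since $G(x)\in X$ (using $G\colon X\to X$, i.e.\ that $G$ maps into $X$, or more precisely that $G(|\xi|)$ stays near $f(X)\subset\Int X$), any subset of $X$ of diameter less than $\delta(\cU)$ containing $G(x)$ — in particular any ball $B_r(G(x))\cap X$ with $r<\delta(\cU)/2$ — is contained in some member $\Int(U)$ of $\cU^o$. Here a small amount of care is needed to match the ``radius $r$'' estimate from Step~1 against ``diameter $<\delta(\cU)$'': a set of radius $r$ about a point has diameter at most $2r$, so one should either set up Step~1 to produce radius $<\delta(\cU)/2$ (which only changes the constant fed to Proposition~\ref{prop:epsilon}) or work directly with the diameter of the union. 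Either way, this yields the required $U$, and since $x\in X$ was arbitrary, $\cG_\rho$ is inscribed into $\cU$.

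\medskip

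\noindent The main obstacle I anticipate is purely bookkeeping: carefully accounting for all the additive error terms — the error radius $\rho$ from the outer approximation, the diameter of $G$-images of cells controlled by Proposition~\ref{prop:epsilon}, and the fact that $\bigcup_{\xi\ni x}$ ranges over possibly several top simplices in the star of $x$ rather than a single one — and confirming that they sum to something strictly below the Lebesgue number (or half of it). A secondary technical point is ensuring the relevant sets genuinely lie in $X$ so that the Lebesgue number of $\cU^o$ applies; this follows since the outer approximation of $G$ on $X$ and {\bf P1} keep images inside a neighborhood of $f(X)\subset\Int(X)\subset X$, but it should be stated explicitly. There is no deep idea here beyond correctly threading the inequality $\rho + (\delta(\cU)-\rho) \leq \delta(\cU)$ through the geometry; the substance was already isolated in Propositions~\ref{prop:epsilon} and~\ref{prop:triangulation}.
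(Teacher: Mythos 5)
Your plan follows essentially the same route as the paper's proof: fix $x$, bound $\bigcup_{\xi\ni x}|\cG_\rho(\xi)|$ inside a ball about $G(x)$ whose radius is controlled by $\rho$ plus the diameter bound from Proposition~\ref{prop:epsilon}, and then invoke the Lebesgue number of $\cU^o$ to produce the required $U$. The bookkeeping issues you flag --- the extra cell-diameter contribution from simplices that merely meet $B_\rho(G(|\xi|))$, and the radius-versus-diameter convention for the Lebesgue number --- are genuine but only affect constants, and the paper's own one-line computation in fact elides them.
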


\begin{proof}
Let $x \in X$.
By definition of a Lebesgue number, there exists $U \in \cU$ such that $B_{\delta(\cU)}(G(x)) \subset \Int(U)$.
By Proposition \ref{prop:epsilon},
$\max_{\xi \in \text{sd}^n(\cX)}\diam(G(\xi)) < \delta(\cU) - \rho$.
This implies
\[
\bigcup_{\xi \ni x}\cG_{\rho}(\xi) \subset B_{\rho}(B_{\delta(\cU) - \rho}(G(x))) = B_{\delta(\cU)}(G(x)) \subset \Int(U),
\]
and thus $\cG$ on $\text{sd}^n(\cX)$ is inscribed into $\cU$.
\end{proof}

\section{Proof of Theorem~\ref{thm:intro:main}}
\label{sec:proof}

We make use of Figure~\ref{fig:G} to illuminate the key challenges in the proof of Theorem~\ref{thm:intro:main}.
 
From the hypotheses of Theorem~\ref{thm:intro:main}, we have the existence of a finite union of closed, bounded, convex $d$-dimensional polytopes $X\subset \R^d$,
continuous function $f\colon \R^d\to \R^d$ such that $f(X)\subset \Int(X)$, and $(\sMR,\leq_{\sMR_f})$, a Morse representation of $f$.

Figure~\ref{fig:G}(A) provides an example of a Morse representation $\sMR$ that consists of three Morse sets $M_0$, $M_1$, and $M_2$.
For the sake of simplicity, we will assume that $M_0$ and $M_2$ are fixed points.
The Morse set $M_1$ consists of an interval.
By definition $M_1$ is an invariant set, but we make no further claims concerning the structure of the dynamics on $M_1$.

\begin{figure}
\centering
\begin{tikzpicture}[scale=0.8]
  \draw[] (-0.3,0) -- (16.3,0); 
  \draw[thick] (0.3,0) -- (15.7,0); 
  \draw[blue,fill=blue] (5/4,0) circle (.5ex) node [below] {$M_0$};
  \draw[blue,fill=blue] (21/4,0) circle (.5ex) node [below] {$M_2$};
  \draw[blue, ultra thick] (8.75,0) -- (61/4,0); 
  \draw[blue] (49/4,-10pt) node  {$M_1$};
  \draw[] (-0.5,-10pt) node  {\textbf{(a)}};
\end{tikzpicture}

\vspace{0.25cm}
\begin{tikzpicture}[scale=0.8]
  \draw[] (-0.3,0) -- (16.3,0); 
  \draw[thick] (0.3,0) -- (15.7,0); 
\draw[red, ultra thick] (0.3,0) -- (2.8,0) node [below] {$N_0$};
\draw[red, ultra thick] (7.2,0) -- (15.7,0) node [below] {$N_1$};
  \draw[blue,fill=blue] (5/4,0) circle (.5ex) node [below] {$M_0$};
  \draw[blue,fill=blue] (21/4,0) circle (.5ex) node [below] {$M_2$};
  \draw[blue,  thick] (8.75,0) -- (61/4,0); 
  \draw[blue] (49/4,-10pt) node  {$M_1$};
  \draw[] (-0.5,-10pt) node  {\textbf{(b)}};
\end{tikzpicture}

\vspace{0.25cm}
\begin{tikzpicture}[scale=0.8]
  \draw[] (-0.3,0) -- (16.3,0);
  \foreach \x in  {0,1,...,32}
    \draw (\x/2,2pt) -- +(0,-4pt);
\draw[purple, ultra thick] (0,0) -- (3,0) node [below] {$s(N_0)$};
\draw[purple, ultra thick] (7,0) -- (16,0) node [below] {$s(N_1)$};
  \draw[] (0.0,-10pt) node  {\textbf{(c)}};
\end{tikzpicture}

\vspace{0.2cm}
\begin{tikzpicture}[scale=0.8]
  \draw[] (-0.3,0) -- (16.3,0); 
  \foreach \x in  {0,1,...,32} 
    \draw (\x/2,2pt) -- +(0,-4pt);
\draw[blue, ultra thick] (1,0) -- (1.5,0) node [below] {$q_0$};
\draw[blue, ultra thick] (2,0) -- (2.5,0) node [below] {$q_4$};
\draw[blue, ultra thick] (3.5,0) -- (4,0) node [below] {$q_5$};
\draw[blue, ultra thick] (5,0) -- (6,0) node [below] {$q_7$};
\draw[blue, ultra thick] (7.5,0) -- (8,0) node [below] {$q_6$};
\draw[blue, ultra thick] (8.5,0) -- (9.5,0) node [below] {$q_1$};
\draw[blue, ultra thick] (10.5,0) -- (11,0) node [below] {$q_3$};
\draw[blue, ultra thick] (12.5,0) -- (15.5,0) node [below] {$q_2$};
  \draw[] (-0.5,-10pt) node  {\textbf{(d)}};
\end{tikzpicture}
    \caption{(a)~$X$ is denoted by the thick black line. Morse representation $\sMR = \setof{M_0,M_1,M_2}$ with partial order $M_0 <_{\sMR_f} M_2$ and $M_1 <_{\sMR_f} M_2$.
    (b)~$N_0 = k(\psi(M_0))$, $N_1 = k(\psi(M_1))$ and $N_2 = k(\psi(M_2)) = X$
    (not shown) are join irreducible attracting blocks.
    (c)~Hash marks represent the grid $\cX$ with diameter less than $\rho(\sN)$ that is used to define $s(N_0)$ and  $s(N_1)$.
    (d)~The Morse graph for $\cG$ is $\sMG(\cG) = \setdef{q_i}{i=0,\ldots, 7}$ with partial order $q_0 <_{\sMG(\cG)} q_4 <_{\sMG(\cG)} q_5 <_{\sMG(\cG)} q_7$, $q_1 <_{\sMG(\cG)} q_6 <_{\sMG(\cG)} q_7$, $q_1<_{\sMG(\cG)} q_3$, and $q_2<_{\sMG(\cG)}q_3$.}
    \label{fig:G}
\end{figure}

By Remark~\ref{rem:MRtoAtt} there exists a unique finite lattice of attractors $\sA$ for which there is a poset isomorphism $\psi\colon \sMR \to \sJ^\vee(\sA)$.

By Proposition \ref{prop:AtoN} there exists a sublattice $\sN$ of $\sABlock(f)$ (with minimal element $\bzero_\sN := \emptyset$ and maximal element $\bone_\sN = X$) and a lattice isomorphism $k\colon \sA \to \sN$ such that $\omega(k(A),f)=A$ for all $A \in \sA$.
Figure~\ref{fig:G}(B) shows a choice of attracting blocks $N_i = k(\psi(M_i))$ for $i=0,1$. By \cite[Theorem 1.2]{kalies:mischaikow:vandervorst:15}, $k(\psi(M_2)) = X$.

While \cite[Theorem 1.2]{kalies:mischaikow:vandervorst:15} provides for the existence of a sublattice $\sN$, there is no claim that it is unique (see Example~\ref{ex:Ablock}).
Therefore, we emphasize that for the remainder of this section we assume that $\sN$ is fixed  with minimal element $\bzero_\sN := \emptyset$ and maximal element $\bone_\sN = X$.
For each $N\in\sN$, choose $\rho(N) > 0$ satisfying Proposition~\ref{prop:Aapprox}.
Set
\[
\rho(\sN) = \min \setdef{\rho(N)}{N\in\sN}.
\]
For the remainder of this section, we consider $G\colon X\to X$ 
satisfying {\bf P1} where $\diam(\cX) < \rho(\sN)$ and set $\cG = \cG_{\rho(\sN)}$.

Given $N\in\sN$, define
$s(N):= \setdef{\xi\in\cX}{\cl(\xi)\cap N\neq\emptyset}$
and $\sK := \setdef{s(N)}{N\in\sN}$.
The intervals defined by the hash marks in Figure~\ref{fig:G}(C) are the polytopes for the complex $\cX$ with $\diam(\cX)<\rho(\sN)$.
The sets $s(N_i)$, for $i=0,1$ are also shown.

\begin{proposition}
\label{prop:NisoK}
    The map $s\colon \sN \to \sK$ is a lattice isomorphism where $\sK$ is a sublattice of $\sInvset^+(\cG)$.
\end{proposition}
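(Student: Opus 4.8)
The plan is to show that $s$ is a well-defined lattice homomorphism, that it is injective, that its image lies in $\sInvset^+(\cG)$, and that it is a sublattice of $\sInvset^+(\cG)$ isomorphic to $\sN$ via $s$. The forward-invariance of each $s(N)$ is already handed to us: by Proposition~\ref{prop:Aapprox}(i), since $\diam(\cX) < \rho(\sN) \le \rho(N)$ and $\sup_{x\in X}\|f(x)-G(x)\| < \rho(N)$ for every $N\in\sN$, we have $s(N)\in\sInvset^+(\cG)$. So the image of $s$ sits inside $\sInvset^+(\cG)$; what remains is the lattice-theoretic bookkeeping.

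First I would check that $s$ respects the order. If $N\subset N'$ in $\sN$, then clearly $s(N)\subset s(N')$, since any simplex meeting $N$ also meets $N'$; so $s$ is order-preserving. For injectivity, suppose $N\neq N'$; by Proposition~\ref{prop:Aapprox}(ii), $N\subset \Int(|s(N)|)$ and $N'\subset\Int(|s(N')|)$, and conversely $|s(N)| \subset B_{a(N)}(N)$, so $|s(N)|$ and $|s(N')|$ determine $\omega(|s(N)|,f)=\omega(N,f)$ — but this is not quite enough to separate $N$ from $N'$ directly, so instead I would argue as follows. The cleanest route is to observe that $N$ is recovered from $s(N)$: by Remark~\ref{rem:kpsi} the elements of $\sN$ are attracting blocks of a very specific, lattice-generated form, but more robustly, one notes that $|s(N)|\supset N$ and $N\supset$ (the relevant recurrent set), so if $s(N)=s(N')$ then $|s(N)|=|s(N')|$ hence $N$ and $N'$ have identical "thickenings" and identical maximal invariant sets; combined with the fact that in $\sABlock(f)$ the partial order is inclusion and $\sN$ is a fixed finite sublattice, distinctness of $N, N'$ must be witnessed by a point $x$ in one but not the other, and since $\diam(\cX)<\rho(\sN)$ is small relative to the geometry of the finitely many blocks in $\sN$ (shrinking $\rho(\sN)$ further if necessary, which is harmless), $x$ is separated from the other block by more than $\diam(\cX)$, forcing $s(N)\neq s(N')$. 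This separation step — making precise that the mesh is fine enough to distinguish the finitely many blocks of the fixed lattice $\sN$ — is where I expect the only real friction, and it may require recording an explicit lower bound on $\rho(\sN)$ coming from the Hausdorff distances among elements of $\sN$, or simply invoking that $\sN$ is finite so such a bound exists.

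Next I would verify that $s$ preserves meets and joins. Since the lattice operations on $\sN\subset\sABlock(f)$ are $\cup$ and $\cap$ (Proposition~\ref{prop:ablock_is_bdlat}), and the lattice operations on $\sInvset^+(\cG)$ are also $\cup$ and $\cap$, it suffices to check $s(N\cup N') = s(N)\cup s(N')$ and $s(N\cap N') = s(N)\cap s(N')$ as sets of simplices. The join identity is immediate: a simplex meets $N\cup N'$ iff it meets $N$ or meets $N'$. The meet identity is the subtle one: a simplex $\xi$ meeting both $N$ and $N'$ need not meet $N\cap N'$. Here I would again use that $\diam(\cX)$ is small relative to the geometry of $N$ and $N'$: if $|\xi|$ meets $N$ and meets $N'$ but $\diam(|\xi|)<\rho(\sN)$, then, after possibly shrinking $\rho(\sN)$ to be smaller than a separation constant for each pair with disjoint... no — rather, one uses that $N$ and $N'$ are attracting blocks with $f(N)\subset\Int N$ etc., but actually the correct and standard fact (cf.\ \cite{kalies:mischaikow:vandervorst:05,kalies:mischaikow:vandervorst:15}) is that for the $\rho$-minimal outer approximation the assignment $N\mapsto s(N)$ built from a sufficiently fine grid commutes with intersection; I would cite or reprove this, noting that $\xi$ meeting $N$ within distance $\diam(\cX)$ means $|\xi|\subset B_{\rho(\sN)}(N)$ and similarly $|\xi|\subset B_{\rho(\sN)}(N')$, and for $\rho(\sN)$ below the relevant collar width, $B_{\rho(\sN)}(N)\cap B_{\rho(\sN)}(N') \subset B_{a}(N\cap N')$ for the appropriate $a$, using that $N,N'$ are closures of their interiors (regularity of $X$ and the polytopal structure). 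Thus $|\xi|$ meets $N\cap N'$, giving the reverse inclusion.

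Finally, once $s$ is shown to be an injective lattice homomorphism with image in $\sInvset^+(\cG)$, its image $\sK = s(\sN)$ is a sublattice of $\sInvset^+(\cG)$ (closed under $\cup,\cap$ by the homomorphism property), and $s\colon\sN\to\sK$ is a lattice isomorphism onto it, since a bijective lattice homomorphism is an isomorphism. That $\bzero_\sN=\emptyset\mapsto\emptyset$ and $\bone_\sN=X\mapsto \cX$ is clear, so the bounded lattice structure is preserved as well. The overall structure of the argument is short; essentially all the substance is Proposition~\ref{prop:Aapprox} plus the geometric fact that a sufficiently fine grid does not merge or confuse the finitely many attracting blocks of the fixed lattice $\sN$ — and I expect the meet-preservation step to be the one requiring the most care, since it is exactly the place where the combinatorics of the grid could in principle fail to mirror the set-theoretic intersection.
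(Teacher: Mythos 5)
Your skeleton (reduce everything to Proposition~\ref{prop:Aapprox}, then do lattice bookkeeping) matches the paper's, but two steps diverge, and one of them has a genuine gap. The gap is in meet preservation. Your argument ends by showing $|\xi|\subset B_{a}(N\cap N')$ for a small $a$, and then concludes ``thus $|\xi|$ meets $N\cap N'$.'' That is a non sequitur: being within distance $a$ of $N\cap N'$ does not imply intersecting $N\cap N'$, and indeed for general compact sets a small simplex can meet both $N$ and $N'$ near a thin part of their overlap without meeting $N\cap N'$ itself. The paper does not attack this metrically at all; it splits on whether $N\cap N'$ is empty. In the disjoint case it argues dynamically: by Proposition~\ref{prop:Aapprox}(ii), $f(|s(N)|)\subset N$ and $f(|s(N')|)\subset N'$, so $f(|s(N)\cap s(N')|)\subset N\cap N'=\emptyset$, which forces $s(N)\cap s(N')=\emptyset=s(N\cap N')$. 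No mesh estimate is needed, and the image under $f$ (rather than the simplex itself) is what lands in the intersection --- exactly the move your metric argument is missing. If you want to keep a metric proof for the nonempty case you must produce an actual point of $|\xi|\cap N\cap N'$, not mere proximity.

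On injectivity you abandoned the correct argument one step too early. The identity $\omega(|s(N)|,f)=\omega(N,f)$ from Proposition~\ref{prop:Aapprox}(iii) \emph{is} enough, because $\sN$ is not an arbitrary family of attracting blocks: by construction (Remark~\ref{rem:kpsi}) there is a lattice isomorphism $k\colon\sA\to\sN$ with $\omega(k(A),f)=A$, so $N\mapsto\omega(N,f)=k^{-1}(N)$ is injective on $\sN$. Hence $s(N_0)=s(N_1)$ gives $\omega(N_0,f)=\omega(|s(N_0)|,f)=\omega(|s(N_1)|,f)=\omega(N_1,f)$ and therefore $N_0=N_1$. Your substitute --- shrink $\rho(\sN)$ below a separation constant so the grid distinguishes the finitely many blocks --- can be made rigorous (finiteness of $\sN$ gives a positive bound), but it imposes an additional constraint on $\rho(\sN)$ beyond the one fixed via Proposition~\ref{prop:Aapprox} in Section~5, which you would then have to carry through the rest of the proof of Theorem~\ref{thm:intro:main}. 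The paper's route costs nothing extra.
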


\begin{proof}
By Proposition~\ref{prop:Aapprox} part~(i), $\sK\subset \sInvset^+(\cG)$.
Let $N_0, N_1 \in \sN$. If $N_0 \cap N_1 \neq \emptyset$, then $s(N_0 \cap N_1) = s(N_0) \cap s(N_1)$. Suppose $N_0 \cap N_1 = \emptyset$.
By Proposition~\ref{prop:Aapprox} part~(ii), $f(|s(N_i)|) \subset N_i$ for $i = 0, 1$. Hence, $f(|s(N_0) \cap s(N_1)|) \subset N_0 \cap N_1$, so $s(N_0) \cap s(N_1)= \emptyset$.

We leave it to the reader to check that $s(\emptyset)=\emptyset$, $s(X)=\cX$, and $s(N_0\cup N_1) = s(N_0)\cup s(N_1)$.

It remains to show that $s$ is an isomorphism.
By definition $s$ is surjective.
To show that $s$ is injective, suppose that $N_0,N_1\in \sN$ and $s(N_0) = s(N_1)$. 
Recall that we have fixed an isomorphism $k\colon \sA \to \sN$. Let $A_i := k^{-1}(N_i)$.
By Proposition~\ref{prop:Aapprox} part~(iii), $\omega(|s(N_i)|,f) = \omega(N_i,f)$. 
Hence 
\[
A_0= \omega(k(A_0),f) = \omega(N_0,f)=\omega(N_1,f) = \omega(k(A_1),f) = A_1.
\]
Since $k$ is an isomorphism, $N_0 = N_1$.
\end{proof}

\begin{proposition}
\label{prop:Mcontained}
If $M\in \sMR$, then $M\subset |s(k(\psi(M)))\setminus s(k(\psi(M)^<))|$.
\end{proposition}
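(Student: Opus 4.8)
The plan is to establish two facts --- that $M\subset|s(N)|$ and that $M\cap|s(N')|=\emptyset$, where I abbreviate $N:=k(\psi(M))$ and $N':=k(\psi(M)^<)$ --- and then to combine them. Both $N$ and $N'$ belong to $\sN$, with $N'\subsetneq N$, and by Remark~\ref{rem:kpsi} the Morse set satisfies $M=\Inv(N\setminus N',f)$; in particular every point of $M$ carries a full trajectory $\gamma\colon\Z\to N\setminus N'$.

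The inclusion $M\subset|s(N)|$ is immediate, and I will actually record the slightly stronger observation that if $x\in N$ then every simplex $\xi\in\cX$ with $x\in|\xi|$ lies in $s(N)$: indeed $x\in|\xi|\cap N$, so $|\xi|\cap N\neq\emptyset$. Such a $\xi$ exists for each $x\in M\subset N\subset X$ by {\bf X1}.

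The substantive step is to show $M\cap|s(N')|=\emptyset$; I expect this to be the only part of the proof with genuine content. Suppose $x$ lay in this intersection. Since $x\in M$, it carries a full trajectory in $N\setminus N'$, so $f(x)$ is the next point of that trajectory and hence $f(x)\in N\setminus N'$; in particular $f(x)\notin N'$. On the other hand, since $\diam(\cX)<\rho(\sN)\le\rho(N')$, Proposition~\ref{prop:Aapprox}(ii) applies to the attracting block $N'$ and yields $f(|s(N')|)\subset N'$; as $x\in|s(N')|$ this forces $f(x)\in N'$, a contradiction. The point is simply the collision, after a single application of $f$, between the escaping behaviour built into the definition $M=\Inv(N\setminus N',f)$ and the forward-trapping estimate $f(|s(N')|)\subset N'$ furnished by Proposition~\ref{prop:Aapprox}(ii).

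To finish, take $x\in M$ and pick any simplex $\xi\in\cX$ with $x\in|\xi|$. By the observation in the second paragraph, $\xi\in s(N)$. Since $x\notin|s(N')|=\bigcup_{\eta\in s(N')}|\eta|$ while $x\in|\xi|$, we must have $\xi\notin s(N')$. Hence $\xi\in s(N)\setminus s(N')$ and $x\in|\xi|\subset|s(N)\setminus s(N')|=|s(k(\psi(M)))\setminus s(k(\psi(M)^<))|$, which is the claim. The degenerate case $\psi(M)^<=\bzero$, for which $N'=\emptyset$ and $s(N')=\emptyset$, is covered automatically, the middle step being vacuous.
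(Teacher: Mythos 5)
Your proof is correct and follows essentially the same route as the paper: both arguments reduce the claim to showing $M\cap|s(k(\psi(M)^<))|=\emptyset$ and derive that from the forward-trapping estimate $f(|s(N')|)\subset N'$ of Proposition~\ref{prop:Aapprox}(ii). The only difference is that you obtain $M\cap N'=\emptyset$ immediately from $M=\Inv(N\setminus N',f)$ via Remark~\ref{rem:kpsi}, whereas the paper rederives this fact from the dual-repeller identity and $\omega$-limit sets; this is a harmless shortcut.
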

\begin{proof}
Observe that $M\subset \psi(M)\subset k(\psi(M))\subset |s(k(\psi(M)))|$.
Thus it is sufficient to prove that $M\cap |s(k(\psi(M)^<))| = \emptyset$.

By Remark~\ref{rem:MRtoAtt},
\[M \cap \psi(M)^< = \psi(M)\cap ((\psi(M)^<)^* \cap \psi(M)^<) = \psi(M) \cap \emptyset = \emptyset.
\]

We now show that $M\cap k(\psi(M)^<)= \emptyset$.
We leave it to the reader to check that given $U, V \subset X$, $\omega(U \cap V, f) \subset \omega(U, f) \cap \omega(V, f)$. 
Thus
\[
\omega(M\cap k(\psi(M)^<),f) \subset \omega(M,f) \cap \omega(k(\psi(M)^<),f),
\]
and
\[
\omega(M,f) \cap \omega(k(\psi(M)^<),f) = M \cap \psi(M)^< = \emptyset.
\]
Since $\omega(M\cap k(\psi(M)^<), f) = \emptyset$, $M\cap k(\psi(M)^<) = \emptyset$.

By Proposition \ref{prop:Aapprox} part~(ii), 
$f(M\cap |s(k(\psi(M)^<))|) \subset M \cap k(\psi(M)^<)=\emptyset$. Thus, $M\cap |s(k(\psi(M)^<))|=\emptyset$.
\end{proof}

The reader can check that the illustration of Figure~\ref{fig:G}(C)
is compatible with Proposition~\ref{prop:Mcontained}.

Let $\pi\colon \cX \to \bar{\cX}$ denote the quotient map that reduces $\cG$ to its condensation graph $\bar{\cG}$.
The intervals highlighted in Figure~\ref{fig:G}(D) are the Morse nodes of the Morse graph $\sMG(\cG) = \setdef{q_j}{j=0,\ldots,7}$ of the outer approximation $\cG$ of $G$.
We use this hypothetical example to highlight two challenges that arise from using the dynamics computed via $\cG$ to describe the dynamics of $f$: spurious recurrence and overly refined recurrence.

Before turning to the challenges we note that the Morse nodes $q_0$ and $q_7$ identify the location of the Morse sets $M_0$ and $M_2$ of $f$, respectively, in that $M_0\subset |\pi^{-1}(q_0)|$ and $M_2\subset |\pi^{-1}(q_7)|$.
Furthermore, as is shown in the proof of Theorem~\ref{thm:intro:main}, $\Con(M_0)\sim \Con(q_0)$ and $\Con(M_2)\sim \Con(q_7)$.
This represents the ideal information that can be provided by $\cG$.

With respect to the dynamics of $f$, the Morse nodes $q_4$, $q_5$, and $q_6$ are spurious, meaning that they identify possible recurrent dynamics that does not exist.
An explanation for the existence of these Morse nodes is that $G$ does not provide a sufficiently good approximation of $f$ to identify that $f$ does not exhibit recurrent dynamics at these locations of phase space. 
However, by Definition~\ref{defn:ConGq} and Remark~\ref{rem:ConG} $\Con(q_i)=0$ for $i=4,5,6$, otherwise $\Inv((N(q)\setminus N(q)^<,f)\neq \emptyset$.
Note that the user is thus alerted of the potential that  spurious invariant sets have been identified (as indicated in Example~\ref{ex:ConleyIndex2}, trivial Conley index does not imply that the associated invariant set is trivial).

Finally, the Morse nodes $q_1$, $q_3$ and $q_2$ are indicative of overly refined recurrence.
These Morse nodes intersect with, but do not cover the Morse set $M_1$.
Recall that the hypothesis of Theorem~\ref{thm:intro:main} and the information provided in Figure~\ref{fig:G}(A) is the choice of a Morse representation $\sMR$ of $f$.
Overly refined recurrence arises in regions where $G$ tightly approximates $f$, which allows  $\cG$ to identify finer dynamics than that captured by $\sMR$.
In particular, with respect to the example of Figure~\ref{fig:G} we can conclude that on the region of phase space $M_1$ the dynamics of $f$ can be further decomposed into at least three recurrent sets that lie in the regions $|\pi^{-1}(q_1)|$, $|\pi^{-1}(q_3)|$ and $|\pi^{-1}(q_2)|$, with connecting orbits from $q_3$ to $q_1$ and $q_3$ to $q_1$.

Overly refined recurrence occurs if there exists a Morse representation of $f$ that is finer than $\sMR$, and $\cG$ identifies this finer Morse representation.
Therefore, to satisfy the conclusion of Theorem~\ref{thm:intro:main} we collapse this information to construct the desired Morse representation.

Figure~\ref{fig:G}(D) illustrates the following two propositions.

\begin{proposition}
\label{prop:qinM}
If $M\in \sMR$, then there exists $q\in \sMG(\cG)$ such that  $M\cap |\pi^{-1}(q)|\neq\emptyset$.
\end{proposition}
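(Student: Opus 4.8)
The plan is to show that the Morse set $M$, sitting inside the region $|s(k(\psi(M)))\setminus s(k(\psi(M)^<))|$ by Proposition~\ref{prop:Mcontained}, must meet at least one recurrent component of $\cG$; this recurrent component will be the desired $q$. First I would fix $M\in\sMR$, write $N := k(\psi(M))\in\sJ^\vee(\sN)$ and $N^< := k(\psi(M)^<)$, so by Proposition~\ref{prop:NisoK} the sets $s(N)$ and $s(N^<)$ lie in $\sInvset^+(\cG)$ and $s(N^<)\subsetneq s(N)$. Set $\cT := s(N)\setminus s(N^<)$, the combinatorial ``Morse tile'' at this level. Proposition~\ref{prop:Mcontained} gives $M\subset |\cT|$, and since $M\neq\emptyset$ (Morse sets are nonempty by Definition~\ref{defn:MorseRepresentation}), $\cT\neq\emptyset$.

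The key step is to extract a recurrent component from $\cT$. Since $s(N)$ is forward invariant under $\cG$ and $s(N^<)$ is forward invariant, the set $\cT = s(N)\setminus s(N^<)$ need not itself be forward invariant, but every $\cG$-walk starting in $\cT$ stays in $s(N)$; moreover, once a walk enters $s(N^<)$ it can never return to $\cT$. Consider the induced subgraph of $\cG$ on $\cT$. Because $\cG$ is a combinatorial multivalued map (every vertex has a nonempty image), and $\cT$ is finite, I would argue as follows: pick any $\xi_0\in\cT$ with $M\cap|\xi_0|\neq\emptyset$ (such $\xi_0$ exists since $M\subset|\cT|$). Using that $M$ is invariant under $f$, that $\cG$ is an outer approximation of $G$, and that $f$ and $G$ are $\rho$-close, one shows there is a $\cG$-walk from $\xi_0$ that remains forever in $s(N)$ and in fact returns to $\cT$ infinitely often — i.e., it cannot be absorbed into $s(N^<)$. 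Indeed, a full trajectory $\gamma_x\colon\Z\to M$ through a point $x$ of $M\cap|\xi_0|$ shadows, under $\cG$, to a bi-infinite walk staying in $s(N)$ and hitting $\cT$; its forward tail lies in a finite set, so by pigeonhole it revisits some vertex $\eta\in\cT$, and the segment between two such visits is a nontrivial closed walk. This closed walk lies in a single strongly connected component $q'$ of $\cG$ with $q'\cap\cT\neq\emptyset$ and $q'$ containing at least one edge, hence $q'$ is a recurrent component; under $\pi$ it gives a node $q\in\sMG(\cG)$ with $|\pi^{-1}(q)|\supset q'$ meeting $\cT$, so $M\cap|\pi^{-1}(q)|$ is nonempty (after possibly adjusting which vertex of the component we witness against $M$, using that the whole component lies in $s(N)$ and, being recurrent, cannot be entirely in $s(N^<)$).

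The main obstacle I anticipate is the shadowing argument: verifying rigorously that a full $f$-trajectory inside $M$ lifts to a $\cG$-walk that does \emph{not} leak into $s(N^<)$, so that the recurrence we produce genuinely sits in the tile $\cT$ rather than being pushed down to a lower Morse set. This requires Proposition~\ref{prop:Aapprox}(ii) (so that $f(|s(N^<)|)\subset N^<$, forcing trajectories that hit $s(N^<)$ to have been there in the past) together with the observation that $M\cap|s(N^<)|=\emptyset$ from Proposition~\ref{prop:Mcontained}. Once the confinement to $\cT$ is established, the finiteness/pigeonhole extraction of a closed walk, and hence of a recurrent component meeting $M$, is routine. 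I would present the confinement as the crux and treat the existence of the node $q$ as an immediate consequence of the definition of $\sMG(\cG)$ as the subposet of recurrent strongly connected components.
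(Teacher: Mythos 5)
Your core mechanism is the same as the paper's: lift a forward orbit of a point $x_0\in M$ to a $\cG$-walk $\{\xi_n\}$ with $x_n\in\xi_n$ (possible because $\cG$ is an outer approximation of $f$, so $x_{n+1}=f(x_n)\in\Int(|\cG(\xi_n)|)$ and one may choose $\xi_{n+1}\in\cG(\xi_n)$), then use finiteness of $\cX$ to find a repeated vertex, hence a recurrent component $q$. However, you have wrapped this in machinery that this proposition does not need, and you have misidentified the crux. The conclusion $M\cap|\pi^{-1}(q)|\neq\emptyset$ follows immediately from the fact that the repeated vertex $\xi_n$ of the walk contains $x_n\in M$ and lies in $\pi^{-1}(q)$; there is no need to establish that the walk, or the recurrent component, avoids $s(k(\psi(M)^<))$. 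The ``confinement to the tile'' that you flag as the main obstacle is genuinely needed later --- it is the content of Proposition~\ref{prop:interval}, where one must show $\pi^{-1}(q)\subset s(k(\psi(M)))\setminus s(k(\psi(M)^<))$, and the paper handles it there via Proposition~\ref{prop:Mcontained} and Lemma~\ref{lem:recurrent_in_fwd_inv} rather than by a shadowing argument. So your proof is correct (your confinement claim does hold, since by Proposition~\ref{prop:Mcontained} every cell containing a point of $M$ lies in $s(N)\setminus s(N^<)$), but for this statement you should strip it down to: forward orbit in $M$, lifted walk, pigeonhole, done. The bi-infinite trajectory and the appeal to $\rho$-closeness of $f$ and $G$ are likewise unnecessary here beyond the single fact that $\cG$ is an outer approximation of $f$.
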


\begin{proof}
Let $x_0\in M$. 
Since $M$ is invariant, its forward trajectory\\ $\setdef{x_{n+1} := f(x_n)}{n\in\N}\subset M$.  
For each $n\in \N$, choose $\xi_n\in \cX$ such that $x_n\in\xi_n$.
Since $\cG$ is an outer approximation of $f$, $\xi_{n+1}\in \cG(\xi_n)$, i.e., $\setof{\xi_n}$ defines an infinite walk in $\cX$.
Since $\cX$ is finite, $\setof{\xi_n}$ must contain a cycle and thus intersects at least one recurrent component, $\pi^{-1}(q)$ for some $q\in \sMG(\cG)$.
The assumption that $x_n\in \xi_n$ implies that $M\cap |\pi^{-1}(q)|\neq\emptyset$.
\end{proof}

\begin{lemma}
\label{lem:recurrent_in_fwd_inv}
Let $q_1 \in \sMG(\cG)$ and $A \in \psi(\sMR)$.
If $\pi^{-1}(q_1) \cap s(k(A)) \neq \emptyset$, then $\pi^{-1}(q_1) \subset s(k(A))$.
Furthermore, if $q_0 \leq_{\sMG(\cG)} q_1$, then $\pi^{-1}(q_0) \subset s(k(A))$.
\end{lemma}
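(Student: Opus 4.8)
The plan is to exploit that $s(k(A))$ is a \emph{forward invariant} set of the outer approximation $\cG$, so that no walk in $\cG$ can leave it once it has entered; combined with the facts that each Morse node is strongly connected and that the Morse-graph order records reachability, the statement should follow quickly.

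First I would record that $s(k(A))\in\sInvset^+(\cG)$. Since $A\in\psi(\sMR)$ and, by Remark~\ref{rem:MRtoAtt}, $\psi(\sMR)=\sJ^\vee(\sA)$, we have $A\in\sJ^\vee(\sA)\subset\sA$, hence $k(A)\in\sN$ because $k$ is a lattice isomorphism, hence $s(k(A))\in\sK\subset\sInvset^+(\cG)$ by Proposition~\ref{prop:NisoK}. Thus $\cG(s(k(A)))\subset s(k(A))$, and an immediate induction on walk length shows that if $\xi\in s(k(A))$ and $\xi\walk\eta$ in $\cG$ then $\eta\in s(k(A))$. For the first assertion I would then pick $\xi\in\pi^{-1}(q_1)\cap s(k(A))$ and let $\eta\in\pi^{-1}(q_1)$ be arbitrary; since $q_1\in\sMG(\cG)$, the fiber $\pi^{-1}(q_1)$ is a strongly connected component, so there is a walk $\xi\walk\eta$ in $\cG$, whence $\eta\in s(k(A))$ and $\pi^{-1}(q_1)\subset s(k(A))$.

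For the ``furthermore'', still assuming $\pi^{-1}(q_1)\cap s(k(A))\neq\emptyset$, I would use that $q_0\leq_{\sMG(\cG)}q_1$ means, by definition of the order on $\sSC(\cG)$, that $q_1\walk q_0$ in the condensation graph $\bar{\cG}$. Lifting this walk produces a walk in $\cG$ from some $\xi'\in\pi^{-1}(q_1)$ to some $\eta'\in\pi^{-1}(q_0)$; then $\xi'\in s(k(A))$ by the first assertion, so $\eta'\in s(k(A))$ by forward invariance, giving $\pi^{-1}(q_0)\cap s(k(A))\neq\emptyset$. Since $q_0\in\sMG(\cG)$ (the Morse graph is the subposet of recurrent components and $q_0$ lies below $q_1$), the first assertion applied to $q_0$ then yields $\pi^{-1}(q_0)\subset s(k(A))$.

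The one step needing a little care—and the only real obstacle—is the lift of a walk from $\bar{\cG}$ to $\cG$: one must observe that an edge $q\to q'$ of the condensation graph is, by construction, witnessed by an actual edge of $\cG$ between $\pi^{-1}(q)$ and $\pi^{-1}(q')$, and then concatenate such edges with walks inside the intermediate fibers (available since every $\pi^{-1}(q)$ is strongly connected, with length-zero walks allowed at singleton non-recurrent fibers) to obtain a single walk. Everything else is formal, resting only on forward invariance of $s(k(A))$ under $\cG$ and on the definitions of $\sMG(\cG)$ and $\leq_{\sMG(\cG)}$.
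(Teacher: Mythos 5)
Your proof is correct and follows essentially the same route as the paper's: establish that $s(k(A))$ is forward invariant under $\cG$, use strong connectivity of $\pi^{-1}(q_1)$ for the first claim, and use the existence of a walk in $\cG$ from $\pi^{-1}(q_1)$ to $\pi^{-1}(q_0)$ witnessing $q_0 \leq_{\sMG(\cG)} q_1$ for the second. Your extra care in lifting the condensation-graph walk to $\cG$ is a detail the paper asserts without elaboration, but the argument is the same.
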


\begin{proof}
By Proposition~\ref{prop:Aapprox} part~(i), $s(k(A))\in \sInvset^+(\cG)$. Since $\pi^{-1}(q_1)$ is a recurrent component of $\cG$, then $\pi^{-1}(q_1) \cap s(k(A)) \neq \emptyset$ implies $\pi^{-1}(q_1) \subset s(k(A))$.

By definition, the assumption that $q_0 \leq_{\sMG(\cG)} q_1$ implies that there exists a walk $\xi_1\walk \xi_0$ in $\cG$ such that $\xi_0 \in \pi^{-1}(q_0)$ and $\xi_1 \in \pi^{-1}(q_1)$.
Since $s(k(A))\in \sInvset^+(\cG)$, the walk $\xi_1\walk\xi_0$ is contained in $s(k(A))$.
Thus, $\pi^{-1}(q_0) \subset s(k(A))$.
\end{proof}

\begin{proposition}
\label{prop:interval}
Given $M\in \sMR$, set 
\[
I(M):= \setdef{q\in \sMG(\cG)}{ \pi^{-1}(q)\subset s(k(\psi(M)))\setminus s(k(\psi(M)^<))}.
\]
Then,  $I(M)$ is a nonempty convex subset of $\sMG(\cG)$.    
\end{proposition}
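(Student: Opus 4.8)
The plan is to establish the two assertions of the proposition separately. Throughout write $N:=k(\psi(M))$ and $N^<:=k(\psi(M)^<)$; both lie in $\sN$, so by Proposition~\ref{prop:Aapprox}(i) (exactly as in the proof of Proposition~\ref{prop:NisoK}) $s(N),s(N^<)\in\sInvset^+(\cG)$, and since $\psi(M)^<\subset\psi(M)$ and $s$ is order preserving, $s(N^<)\subset s(N)$. The whole argument rests on two features of $\cG$: a recurrent component, being strongly connected, is either contained in or disjoint from any forward invariant set; and a walk that starts in a forward invariant set stays there. Recall that ``$I(M)$ is convex'' means $q_0\leq_{\sMG(\cG)}q_1\leq_{\sMG(\cG)}q_2$ with $q_0,q_2\in I(M)$ forces $q_1\in I(M)$.

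For nonemptiness I would start from Proposition~\ref{prop:qinM}, which gives $q\in\sMG(\cG)$ with $M\cap|\pi^{-1}(q)|\neq\emptyset$; pick $x$ in this intersection and $\xi\in\pi^{-1}(q)$ with $x\in|\xi|$. The proof of Proposition~\ref{prop:Mcontained} yields $M\subset N$ and $M\cap|s(N^<)|=\emptyset$. Hence $\cl(\xi)\cap N\ni x$, so $\xi\in s(N)$; and $\xi\notin s(N^<)$, for otherwise $x\in|\xi|\subset|s(N^<)|$ would contradict $x\in M$. Thus $\pi^{-1}(q)$ meets $s(N)$ and is not contained in $s(N^<)$. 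Applying Lemma~\ref{lem:recurrent_in_fwd_inv} with $A=\psi(M)\in\psi(\sMR)$ gives $\pi^{-1}(q)\subset s(N)$, while strong connectivity of $\pi^{-1}(q)$ (a recurrent component not contained in the forward invariant set $s(N^<)$ must be disjoint from it) gives $\pi^{-1}(q)\cap s(N^<)=\emptyset$. Therefore $\pi^{-1}(q)\subset s(N)\setminus s(N^<)$, i.e.\ $q\in I(M)$.

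For convexity, take $q_0\leq_{\sMG(\cG)}q_1\leq_{\sMG(\cG)}q_2$ with $q_0,q_2\in I(M)$. From $q_2\in I(M)$ we have $\pi^{-1}(q_2)\subset s(N)$; the walk realizing $q_1\leq_{\sMG(\cG)}q_2$ starts in $\pi^{-1}(q_2)\subset s(N)$ and, $s(N)$ being forward invariant, ends inside $s(N)$, so $\pi^{-1}(q_1)\cap s(N)\neq\emptyset$, whence $\pi^{-1}(q_1)\subset s(N)$ by strong connectivity. Suppose $\pi^{-1}(q_1)$ also met $s(N^<)$; then $\pi^{-1}(q_1)\subset s(N^<)$, and the walk realizing $q_0\leq_{\sMG(\cG)}q_1$ would trap $\pi^{-1}(q_0)$ inside $s(N^<)$, contradicting $q_0\in I(M)$ (which gives $\pi^{-1}(q_0)\cap s(N^<)=\emptyset$ with $\pi^{-1}(q_0)\neq\emptyset$). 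Hence $\pi^{-1}(q_1)\cap s(N^<)=\emptyset$ and $q_1\in I(M)$.

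The only delicate point — and the closest thing to an obstacle — is that Lemma~\ref{lem:recurrent_in_fwd_inv} is phrased for $A\in\psi(\sMR)$, whereas $\psi(M)^<$ need not be join-irreducible and so need not lie in $\psi(\sMR)$; thus the claims about $s(N^<)$ cannot cite the lemma verbatim. However, the lemma's proof uses only that $s(k(A))\in\sInvset^+(\cG)$, that a recurrent component meets a forward invariant set iff it is contained in it, and that walks are trapped in forward invariant sets — all valid for $s(N^<)$ since $N^<\in\sN$. I would therefore either apply these facts directly or first extract the one-line sublemma: \emph{a recurrent component of $\cG$ is either contained in or disjoint from any $\cN\in\sInvset^+(\cG)$, and if it is contained in $\cN$ then so is every recurrent component below it in $\leq_{\sMG(\cG)}$.} Everything else is bookkeeping with the inclusions $M\subset N$, $M\cap|s(N^<)|=\emptyset$, and $s(N^<)\subset s(N)$.
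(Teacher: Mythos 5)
Your proof is correct and follows essentially the same route as the paper's: nonemptiness via Proposition~\ref{prop:qinM} and Proposition~\ref{prop:Mcontained} followed by Lemma~\ref{lem:recurrent_in_fwd_inv}, and convexity by trapping $\pi^{-1}(q_1)$ in $s(k(\psi(M)))$ and ruling out containment in $s(k(\psi(M)^<))$ using $q_0\in I(M)$. Your side remark about $\psi(M)^<$ possibly failing to be join-irreducible is a fair observation (the paper's own proof applies the lemma to it anyway), and your fix --- that the argument only needs $s(k(\psi(M)^<))\in\sInvset^+(\cG)$ --- is exactly right.
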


\begin{proof}
By Proposition~\ref{prop:qinM}, there exists $q\in \sMG(\cG)$ such that  $M\cap |\pi^{-1}(q)|\neq\emptyset$.
By Proposition~\ref{prop:Mcontained}, $M\subset |s(k(\psi(M)))\setminus s(k(\psi(M)^<))|$.
This implies that $\pi^{-1}(q)\cap s(k(\psi(M))) \neq\emptyset$.
By Lemma~\ref{lem:recurrent_in_fwd_inv}, $\pi^{-1}(q)\subset s(k(\psi(M)))$.
Then\\
$\pi^{-1}(q)\cap s(k(\psi(M)^<)) = \emptyset$, for otherwise Lemma~\ref{lem:recurrent_in_fwd_inv} implies that\\ $\pi^{-1}(q)\subset s(k(\psi(M)^<))$, contradicting the fact that $M\cap |\pi^{-1}(q)|\neq\emptyset$ 
and $M\subset |s(k(\psi(M)))\setminus s(k(\psi(M)^<))|$.
Therefore, $I(M)\neq\emptyset$.

If $I(M)$ consists of a unique element, then the result is trivially true.
Thus, we need to show that if $q_0,q_1\in I(M)$ and $q_0<_{\sMG(\cG)}  q' <_{\sMG(\cG)} q_1$, then $q'\in I(M)$.
We provide a proof by contradiction, i.e., we assume that $q'\not\in I(M)$.

By Lemma~\ref{lem:recurrent_in_fwd_inv}, $\pi^{-1}(q') \subset s(k(\psi(M)))$. By assumption $q'\not\in I(M)$, so $\pi^{-1}(q') \subset s(k(\psi(M)^<)))$. Then by Lemma~\ref{lem:recurrent_in_fwd_inv}, $\pi^{-1}(q_0) \subset s(k(\psi(M)^<)))$. Therefore, $q_0\not\in I(M)$, a contradiction.
\end{proof}

\begin{proposition}
\label{prop:partition}
$\setdef{I(M)}{M\in\sMR}$ is a partition of $\sMG(\cG)$.
\end{proposition}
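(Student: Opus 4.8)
The plan is to show that the sets $I(M)$, for $M \in \sMR$, are pairwise disjoint and cover $\sMG(\cG)$; together with Proposition~\ref{prop:interval} (nonemptiness) this yields the partition. The key structural tool is the lattice isomorphism $s\colon \sN \to \sK$ of Proposition~\ref{prop:NisoK}, which transports the join-irreducible structure of $\sN$ (equivalently, via $k$ and $\psi$, of $\sMR$) over to $\sK \subset \sInvset^+(\cG)$, and Lemma~\ref{lem:recurrent_in_fwd_inv}, which says that each recurrent component $\pi^{-1}(q)$ behaves atomically with respect to membership in the forward-invariant sets $s(k(A))$.

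First I would prove disjointness. Suppose $q \in I(M) \cap I(M')$ with $M \neq M'$. Write $A = \psi(M)$ and $A' = \psi(M')$, two distinct join-irreducible elements of $\sA$. Then $\pi^{-1}(q) \subset s(k(A)) \setminus s(k(A^<))$ and $\pi^{-1}(q) \subset s(k(A')) \setminus s(k(A'^<))$. Since $s \circ k$ is a lattice isomorphism onto $\sK$ with $\wedge = \cap$, we have $\pi^{-1}(q) \subset s(k(A)) \cap s(k(A')) = s(k(A \wedge A'))$. Now $A \wedge A' \leq A^<$ or $A \wedge A' = A$: because $A$ is join-irreducible with immediate predecessor $A^<$, the meet $A \wedge A'$ is either $A$ itself (forcing $A \leq A'$, hence $A = A'$ by join-irreducibility of both since they're incomparable or equal — more carefully, $A \leq A'$ and symmetry gives $A = A'$) or $A \wedge A' \leq A^<$; the first is excluded by $M \neq M'$. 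So $A \wedge A' \leq A^<$, hence $s(k(A \wedge A')) \subset s(k(A^<))$, giving $\pi^{-1}(q) \subset s(k(A^<))$, contradicting $\pi^{-1}(q) \cap s(k(A^<)) = \emptyset$. I would need to handle the symmetric possibility (swapping the roles of $A$, $A'$) but the argument is identical.

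Next I would prove that the $I(M)$ cover $\sMG(\cG)$. Fix $q \in \sMG(\cG)$ and consider the set $\mathcal{S}_q := \setdef{N \in \sN}{\pi^{-1}(q) \subset s(N)}$. It is nonempty since $\bone_\sN = X$ and $s(X) = \cX \supset \pi^{-1}(q)$. By Lemma~\ref{lem:recurrent_in_fwd_inv} (the atomicity statement) and the fact that $s$ preserves $\cap$, $\mathcal{S}_q$ is closed under $\wedge$, so it has a minimal element $N^* = \bigwedge \mathcal{S}_q$. I claim $N^*$ is join-irreducible: if $N^* = N_1 \vee N_2$ with $N_1, N_2 \lneq N^*$, then $s(N^*) = s(N_1) \cup s(N_2)$, and since $\pi^{-1}(q)$ is strongly connected (a recurrent component) hence connected in the relevant combinatorial sense, and $s(N_1), s(N_2)$ are forward-invariant with $\pi^{-1}(q) \not\subset s(N_i)$ for either $i$ by minimality — here I would invoke Lemma~\ref{lem:recurrent_in_fwd_inv} again: $\pi^{-1}(q) \cap s(N_i) \neq \emptyset$ would force $\pi^{-1}(q) \subset s(N_i)$, contradicting minimality, so $\pi^{-1}(q) \cap s(N_i) = \emptyset$ for both $i$, whence $\pi^{-1}(q) \cap s(N^*) = \emptyset$, contradicting $N^* \in \mathcal{S}_q$. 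Thus $N^* \in \sJ^\vee(\sN)$, and $N^{*<}$ is its immediate predecessor; by minimality $N^{*<} \notin \mathcal{S}_q$, so $\pi^{-1}(q) \not\subset s(N^{*<})$, and by Lemma~\ref{lem:recurrent_in_fwd_inv} this forces $\pi^{-1}(q) \cap s(N^{*<}) = \emptyset$. Setting $M := \Inv(|N^* \setminus N^{*<}|, f)$, which is the Morse set in $\sMR$ with $k(\psi(M)) = N^*$ (by Remark~\ref{rem:kpsi}), we get $\pi^{-1}(q) \subset s(N^*) \setminus s(N^{*<}) = s(k(\psi(M))) \setminus s(k(\psi(M)^<))$, i.e.\ $q \in I(M)$.

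I expect the main obstacle to be the claim that the minimal element $N^*$ of $\mathcal{S}_q$ is join-irreducible — more precisely, making rigorous the step "a recurrent component cannot be split between two proper forward-invariant subsets of $s(N^*)$." The cleanest route is to observe that $s(N_1) \cup s(N_2) = s(N^*)$ and that each $s(N_i) \in \sInvset^+(\cG)$; since $\pi^{-1}(q)$ is strongly connected, if it met both $s(N_1)$ and $s(N_2)$ a walk from the $s(N_1)$-part to the $s(N_2)$-part would leave $s(N_1)$, but forward invariance of $s(N_1)$ forbids re-entering it, while strong connectivity demands a return walk — contradiction. Equivalently and more simply: $\pi^{-1}(q)$ is a single $\sim$-equivalence class, so Lemma~\ref{lem:recurrent_in_fwd_inv} applied to $N_i$ shows $\pi^{-1}(q) \cap s(N_i) \neq \emptyset \Rightarrow \pi^{-1}(q) \subset s(N_i)$, which immediately contradicts minimality of $N^*$; so in fact $\pi^{-1}(q) \cap s(N_i) = \emptyset$ for $i = 1, 2$, whence $\pi^{-1}(q) \cap s(N^*) = \emptyset$, contradicting $N^* \in \mathcal{S}_q$. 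This reduces the "obstacle" to a careful bookkeeping of which version of Lemma~\ref{lem:recurrent_in_fwd_inv} applies; once that is laid out, the proof is routine.
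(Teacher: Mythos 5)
Your proof is correct, but it reaches the conclusion by a genuinely different route than the paper, and it is worth recording the comparison. For the covering claim, the paper starts from Birkhoff's representation ($\bone_\sN = \bigvee_{M\in\sMR} k(\psi(M))$, so some $s(k(\psi(M_0)))$ meets $\pi^{-1}(q)$) and then descends through the lattice: at each stage Lemma~\ref{lem:recurrent_in_fwd_inv} either places $\pi^{-1}(q)$ in the tile $s(k(\psi(M_0)))\setminus s(k(\psi(M_0)^<))$ or pushes it into $s(k(\psi(M_0)^<))$, which is then re-expressed as a join of smaller join-irreducibles and the argument repeats; finiteness terminates the descent. You instead form $\mathcal{S}_q=\setdef{N\in\sN}{\pi^{-1}(q)\subset s(N)}$, observe it is nonempty and meet-closed, take its minimum $N^*$, and show $N^*$ is join-irreducible because a recurrent component cannot meet a forward-invariant set without being contained in it. This is cleaner and avoids the paper's iterated case analysis; it also makes explicit why the relevant $N^*$ is join-irreducible, which the paper's descent leaves implicit. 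You additionally prove disjointness outright (via $s(k(A))\cap s(k(A'))=s(k(A\wedge A'))$ and the unique-lower-cover property of join-irreducibles), whereas the paper dispatches uniqueness with ``by definition''; your version is the more defensible one, since disjointness of the combinatorial tiles for distinct join-irreducibles is a fact, not a definition. Two small points to tighten: (1) in the disjointness argument the case $A\wedge A'=A$ is not ``excluded by $M\neq M'$'' as written --- it gives $A<A'$, hence $A\leq A'^{<}$, hence $\pi^{-1}(q)\subset s(k(A'^{<}))$, which is the actual contradiction; and (2) Lemma~\ref{lem:recurrent_in_fwd_inv} is stated only for $A\in\psi(\sMR)$, so you should note that its proof uses nothing beyond $s(N)\in\sInvset^+(\cG)$ and therefore applies verbatim to every $N\in\sN$ (the paper's own proof needs this same extension when it applies the lemma to $s(k(\psi(M_0)^<))$).
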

\begin{proof}
Recall that $\psi: \sMR \to \sJ^\vee(\sA)$ is a poset isomorphism and $k(\sJ^\vee(\sA)) = \sJ^\vee(\sN)$.
By \cite[Proposition 2.45]{davey:priestley}, $X = \bone_\sN 
 = \bigvee_{M\in \sMR}k(\psi(M)) = \bigcup_{M\in \sMR}k(\psi(M))$.
Therefore, $\cX = \bigcup_{M\in \sMR}s(k(\psi(M)))$.

Let $q\in \sMG(\cG)$. Then $\pi^{-1}(q) \cap s(k(\psi(M_0)))\neq \emptyset$ for some $M_0\in \sMR$.

By Lemma~\ref{lem:recurrent_in_fwd_inv},
\[
\pi^{-1}(q) \subset s(k(\psi(M_0)))\setminus s(k(\psi(M_0)^<))\quad\text{or}\quad\pi^{-1}(q) \subset  s(k(\psi(M_0)^<)).
\]

If $\pi^{-1}(q) \subset s(k(\psi(M_0)))\setminus s(k(\psi(M_0)^<))$ then we are done.
So assume that $\pi^{-1}(q) \subset  s(k(\psi(M_0)^<)) \neq \emptyset$.
If $\psi(M_0)^< \in \sJ(\sA)$, then we repeat the argument, i.e.,
\[
\pi^{-1}(q) \subset s(k(\psi(M_0)^<))\setminus s(k((\psi(M_0)^<)^<))\quad\text{or}\quad\pi^{-1}(q) \subset  s(k((\psi(M_0)^<)^<)).
\]
So assume $\psi(M_0)^< \not\in \sJ(\sA)$.
As given by
Proposition~\ref{prop:NisoK} and \cite[Theorem 1.2]{kalies:mischaikow:vandervorst:15},
$s$ and $k$ are isomorphisms, which implies $\psi(M_0)^< \neq \emptyset$.
Then by \cite[Proposition 2.45]{davey:priestley}, $\psi(M_0)^<$ can be written as the non-empty join $\psi(M_0)^<=\bigvee_{M<M_0}s(k(\psi(M)))$.
Thus, $\pi^{-1}(q) \cap s(k(\psi(M_1)))\neq \emptyset$ for some $M_1 < M_0$ and we repeat the argument.

These arguments can only be repeated finitely many times and hence there exists $M\in \sMR$ such that $q\in I(M)$. By definition, $q$ belongs to a unique $I(M)$, and therefore, $\setdef{I(M)}{M\in\sMR}$ is a partition of $\sMG(\cG)$.
\end{proof}

Part~(i) of Theorem~\ref{thm:intro:main}, which may be of independent interest, follows from the  following proposition. 

\begin{proposition}
The map $\nu\colon \sMG \to \sMR$ defined by $\nu(q) = M$ if $q\in I(M)$ is a poset epimorphism.    
\end{proposition}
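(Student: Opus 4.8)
The plan is to verify the three properties that make $\nu$ a poset epimorphism: that it is well defined, that it is surjective, and that it is order-preserving. The first two are immediate from what has already been established. By Proposition~\ref{prop:partition} the collection $\setdef{I(M)}{M\in\sMR}$ partitions $\sMG(\cG)$, so every $q\in\sMG(\cG)$ lies in exactly one $I(M)$, and hence $\nu$ is a well-defined map. By Proposition~\ref{prop:interval} each $I(M)$ is nonempty, so $\nu$ is surjective. All the work is therefore in order-preservation.

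For order-preservation I would take $q_0\leq_{\sMG(\cG)}q_1$ with $\nu(q_0)=M_0$ and $\nu(q_1)=M_1$ and show $M_0\leq_{\sMR_f}M_1$. Since $q_1\in I(M_1)$ we have $\emptyset\neq\pi^{-1}(q_1)\subset s(k(\psi(M_1)))$, so the ``furthermore'' clause of Lemma~\ref{lem:recurrent_in_fwd_inv}, applied with $A=\psi(M_1)$, gives $\pi^{-1}(q_0)\subset s(k(\psi(M_1)))$. On the other hand $q_0\in I(M_0)$ gives $\pi^{-1}(q_0)\subset s(k(\psi(M_0)))$ together with $\pi^{-1}(q_0)\cap s(k(\psi(M_0)^<))=\emptyset$. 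Thus the nonempty set $\pi^{-1}(q_0)$ is contained in $s(k(\psi(M_0)))\cap s(k(\psi(M_1)))$. Because $k$ (by \cite[Theorem~1.2]{kalies:mischaikow:vandervorst:15}) and $s$ (by Proposition~\ref{prop:NisoK}) are lattice isomorphisms onto their images, and the meets in $\sN$ and in $\sInvset^+(\cG)$ are both intersection, this intersection equals $s(k(\psi(M_0)\wedge\psi(M_1)))$, where the meet is computed in $\sA$.

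Now I would argue by contradiction: suppose $M_0\not\leq_{\sMR_f}M_1$. As $\psi$ is a poset isomorphism onto $(\sJ^\vee(\sA),\subset)$, this is equivalent to $\psi(M_0)\wedge\psi(M_1)\neq\psi(M_0)$, and since $\psi(M_0)\wedge\psi(M_1)\leq\psi(M_0)$ always holds, $\psi(M_0)\wedge\psi(M_1)$ is strictly below $\psi(M_0)$. Since $\psi(M_0)$ is join-irreducible in the finite lattice $\sA$, it has a unique lower cover $\psi(M_0)^<$, and consequently every element strictly below $\psi(M_0)$ lies at or below $\psi(M_0)^<$; in particular $\psi(M_0)\wedge\psi(M_1)\leq\psi(M_0)^<$. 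Applying the order-preserving map $s\circ k$ yields $s(k(\psi(M_0)\wedge\psi(M_1)))\subset s(k(\psi(M_0)^<))$, hence $\pi^{-1}(q_0)\subset s(k(\psi(M_0)^<))$, contradicting $q_0\in I(M_0)$. Therefore $M_0\leq_{\sMR_f}M_1$, and $\nu$ is order-preserving.

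The only nonroutine point — and the step I would be most careful with — is the last lattice-theoretic observation that a join-irreducible $j$ with lower cover $j^<$ dominates every element strictly below it only via $j^<$, since this is precisely where join-irreducibility is essential, and because it is easy to lose track of which of the several isomorphic lattices ($\sA$, $\sN$, $\sK\subset\sInvset^+(\cG)$) a given meet or order relation is being taken in. I would therefore make each transfer of structure through $k$ and $s$ explicit, as above.
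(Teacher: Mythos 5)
Your proof is correct, and in the two routine steps (well-definedness via Proposition~\ref{prop:partition}, surjectivity via Proposition~\ref{prop:interval}) it coincides with the paper's. In the order-preservation step the underlying mechanism is also the same -- use the ``furthermore'' clause of Lemma~\ref{lem:recurrent_in_fwd_inv} to force $\pi^{-1}(q_0)\subset s(k(\psi(M_1)))$ and then derive the contradiction $\pi^{-1}(q_0)\subset s(k(\psi(M_0)^<))$ -- but you execute it more carefully than the paper does. The paper negates $M_0\leq_{\sMR_f}M_1$ by assuming $M_1<_{\sMR_f}M_0$, which silently discards the case where $M_0$ and $M_1$ are incomparable; its subsequent step $s(k(\psi(M_1)))\subsetneq s(k(\psi(M_0)))$ only makes sense under that comparability assumption. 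Your argument instead passes to the meet $\psi(M_0)\wedge\psi(M_1)$ in $\sA$ (transferred through the lattice isomorphisms $k$ and $s$, where meets become intersections) and invokes the fact that a join-irreducible element of a finite lattice dominates everything strictly below it only through its unique lower cover $\psi(M_0)^<$. That lattice-theoretic observation is correct -- any $a<j$ sits under some lower cover of $j$, and join-irreducibility makes that cover unique -- and it handles the incomparable case uniformly, so your version actually closes a small gap in the published argument. Your explicit bookkeeping of which lattice each meet and order relation lives in is exactly the right thing to be careful about here.
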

\begin{proof}
Proposition~\ref{prop:partition} implies that $\nu$ is well-defined, and Proposition~\ref{prop:interval} implies that $\nu$ is surjective.

To show that $\nu$ is a poset morphism, we need to show that if $q_0 \in I(M_0)$, $q_1 \in I(M_1)$, and $q_0 \leq_{\sMG(\cG)} q_1$, then $M_0 \leq_{\sMR_f} M_1$. We argue by contradiction, i.e. we assume that $M_1 <_{\sMR} M_0$. Since $\psi$, $k$, and $s$ are order-preserving, this assumption implies that $s(k(\psi(M_1))) <_{\sInvset^+(\cG)} s(k(\psi(M_0)))$, or equivalently $s(k(\psi(M_1))) \subsetneq s(k(\psi(M_0)))$.
Then $s(k(\psi(M_1))) \subset s(k(\psi(M_0)))^< = s(k(\psi(M_0)^<))$.

By definition of $I(M_1)$, $\pi^{-1}(q_1) \subset s(k(\psi(M_1)))$.
Since $q_0 \leq_{\sMG(\cG)} q_1$, Lemma~\ref{lem:recurrent_in_fwd_inv} implies that 
$\pi^{-1}(q_0) \subset s(k(\psi(M_1)))$.
Then $\pi^{-1}(q_0) \subset s(k(\psi(M_0)^<))$, contradicting the assumption that $q_0 \in I(M_0)$.
\end{proof}

Part~(ii) of Theorem~\ref{thm:intro:main} follows from Proposition \ref{prop:enc_min_is_outer}, Remark \ref{rem:ConG}, and Proposition~\ref{prop:inscribed}.

\section{Examples}
\label{sec:examples}

In this section, to discuss the applicability of Theorem~\ref{thm:intro:main}, we generate synthetic data using a two-dimensional nonlinear population model, 
\begin{equation}
\label{eq:Leslie}
f_\theta(x):=
\begin{bmatrix}
    (\theta_1 x_1+\theta_2 x_2)e^{-0.1(x_1+x_2)}\\ 0.7x_1
\end{bmatrix}
\end{equation}
where the parameter $\theta$ is chosen to be $\btheta = (23.5,23.5)$.
As this is meant to be a population model it makes sense to restrict our attention to data in the non-negative orthant $[0,\infty)^2$ and we leave it to the reader to check that for $X = [0, 90] \times [0, 70]$, $f_\btheta(X) \subset \Int(X)$.

We choose \eqref{eq:Leslie} for four reasons.
First, mathematically rigorous computations of the dynamics generated by $f_\theta$ can be performed (see \cite{arai:kalies:kokubu:mischaikow:oka:pilarczyk,bush:gameiro:harker:kokubu:mischaikow:obayashi:pilarczyk}).
Second, it is known that at $\btheta$, the dynamics is complicated (see \cite[Figure 1]{ugarcovici:weiss}).
Third,  as a function of $\theta$ the dynamics of $f_\theta$ undergoes dramatic changes over complicated sets of bifurcation points (see \cite[Section 5]{ugarcovici:weiss}).
Fourth, we can demonstrate how, with relatively few data points, we can capture interesting dynamics, and indicate where failure is to be expected. 

\subsection{Baseline}
To obtain a baseline, we use the software package \cite{CMGDB} to construct multivalued maps by evaluating $f_\btheta$ on the vertices of a cubical cell complex obtained by uniformly subdividing $X = [0, 90] \times [0, 70]$. 
We consider two levels of subdivision, and therefore two distinct cell complexes -- $\cZ_1$ and $\cZ_2$ consisting of $2^{18}$  and $2^{27}$ rectangles, respectively -- and corresponding multivalued maps $\cF_1$ and $\cF_2$.
Figure~\ref{fig:baseline} displays the Hasse diagrams of the Morse graphs together with the Conley indices and the corresponding color-coded subsets of phase space. 
Comparing Figures~\ref{fig:baseline}(a) and (b), we observe that the greater level of subdivision leads to a more refined description of the dynamics.

Each node $q_k$ of the Morse graph has a label of the form $k : (s_0(x),s_1(x),s_2(x))$ where $k$ identifies the node and $s_i(x)$ is the product of the invariant factors (excluding the nilpotent blocks) \cite[Section 7.2]{hoffman:kunze} of the $i$-th homology map of the index pair (see Definition~\ref{defn:ConleyIndex}).
Since $X\subset \R^2$, $s_i(x) \equiv 0$ for $i\geq 3$.
We use $\F_5$ coefficients for the computations presented here.

Turning to Figure~\ref{fig:baseline}(a), the Morse graph $\sMG(\cF_1)$ has four nodes $\setof{0,1,2,3}$.
The Conley index of node $0$, $(x^3-1,0,0)$, is that of a stable period three orbit.
From this we can conclude that $\Inv(|\pi^{-1}(0)|,f_\btheta)$ contains a period three orbit (though it need not be stable).
Similarly, the Conley index of node $1$ is that of an unstable (orientation preserving) period-3 orbit and hence $\Inv(|\pi^{-1}(1)|,f_\btheta)$ contains a period three orbit.
Finally, the Conley index of node $3$ is that of a two-dimensional unstable (orientation preserving) fixed point and hence $\Inv(|\pi^{-1}(3)|,f_\btheta)$ contains a fixed point.
Note that in each case we obtained lower bounds on the dynamics of the invariant sets associated with the Morse nodes.
In fact, the dynamics is much more complicated and as indicated in the introduction this section, for some values of $\theta$, too complicated to be completely characterized by finite data (see \cite[Section 5]{ugarcovici:weiss}), while our characterization via Morse graphs and Conley indices can be made rigorous (see \cite{arai:kalies:kokubu:mischaikow:oka:pilarczyk}).

The Conley index of node 2 is trivial. 
However, $|\pi^{-1}(2)|$ contains the origin, which is an unstable fixed point for \eqref{eq:Leslie}.
Thus, $\Inv(|\pi^{-1}(2)|,f_\btheta)\neq \emptyset$.

\begin{figure}
    \centering
    \includegraphics[width=\linewidth]{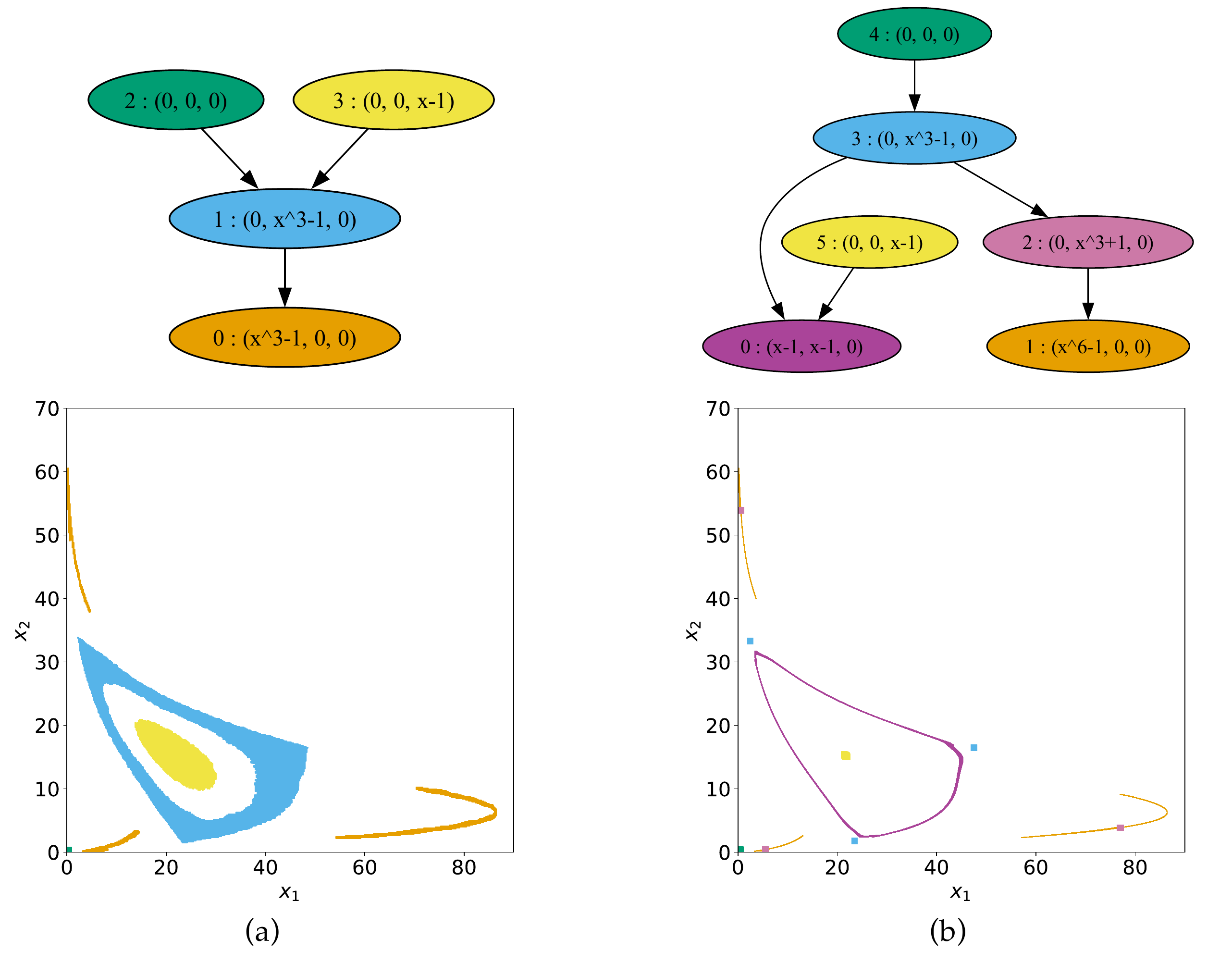}
    \caption{
    Combinatorial dynamics of $\cF$ where $\cF$ is a multivalued map computed directly using $f_\btheta$ given by \eqref{eq:Leslie}. We use these results as a baseline for comparison to the results in Figure~\ref{fig:ML_approx}.
    In both figures, the Hasse diagram of the Morse graph $\sMG(\cF)$ is displayed above a plot of phase space where the sets $|\pi^{-1}(q)|$ are plotted with colors corresponding to $q \in \sMG(\cF)$.
    The cubical cell complex $\cX$ used to obtain these results is constructed by uniformly subdividing phase space into (a) $2^{18}$ rectangles and (b) $2^{27}$ rectangles. For visualization purposes, in plot (a), $|q^{-1}(2)|$ is enlarged by a factor of 10. In plot (b), $|q^{-1}(k)|$ is enlarged by a factor of 100 for $k = 2, 3, 5$, and $|q^{-1}(4)|$ is enlarged by a factor of 200.}
    \label{fig:baseline}
\end{figure}

We remark that since the invariant sets associated with each node of the Morse graph are nontrivial, we have identified a Morse representation of $f_\btheta$, which we denote by $\sMR_1$.
As indicated in the introduction, the use of the language of Morse representations allows us to analyze dynamics at different levels of resolution.

Figure~\ref{fig:baseline}(b) provides a finer level of resolution.
The Morse graph has six nodes $\setof{0,1,2,3,4,5}$.
Nodes $0$ and $1$ are minimal indicating bistability.
The fundamental differences between $\sMG(\cF_1)$ and $\sMG(\cF_2)$ are that $\cF_2$ reveals that the Morse sets defined by $\Inv(|\pi^{-1}(1)|,f_\btheta)$ and $\Inv(|\pi^{-1}(0)|,f_\btheta)$ in Figure~\ref{fig:baseline}(a) can each be decomposed into two Morse sets.
In particular, in Figure~\ref{fig:baseline}(b), $\Inv(|\pi^{-1}(0)|,f_\btheta)$ has the Conley index of an invariant circle and $\Inv(|\pi^{-1}(3)|,f_\btheta)$ has the Conley index of an unstable (orientation preserving) period-3 orbit, while  
$\Inv(|\pi^{-1}(1)|,f_\btheta)$ has the Conley index of a stable  period-6 orbit and $\Inv(|\pi^{-1}(2)|,f_\btheta)$ has the Conley index of an unstable (orientation reversing) period-3 orbit.
We denote the Morse representation associated to this example by $\sMR_2$.

\subsection{Model and Training}
As proof of concept, we use a standard machine learning model to approximate $f$ and demonstrate Theorem~\ref{thm:intro:main}.
We construct data sets of the following form
\[
\cD(T,I) := \setdef{f^k_\btheta(x_i)}{0\leq k\leq T,\ x_i\in X,\ 1\leq i \leq I}
\]
where the initial conditions $x_i$ are chosen uniformly at random in $X$, and we train using the pairs $\setof{(f_\btheta^{k-1}(x_i),f_\btheta^k(x_i))}$.

Fix $G$ to be a fully-connected feedforward neural network with ReLU activation, depth three, and layers of identical width $w$. We train the parameters of $G$ using PyTorch \cite{pytorch} by minimizing the mean squared error with the Adam optimizer \cite{kingma:ba}. The learning rate is initialized at $0.0001$ and reduced on plateau.

Similarly to the baseline method, we uniformly subdivide phase space to obtain a cubical cell complex. Using \cite{CMGDB}, we construct a multivalued map $\cG$ by evaluating $G$ on the vertices of each rectangle.

\subsection{Results}
\label{sec:examples:results}
We present a few simple results, but encourage the interested reader to explore with additional experiments \cite{RCDML}.

\begin{figure}
    \centering
    \includegraphics[width=\linewidth]{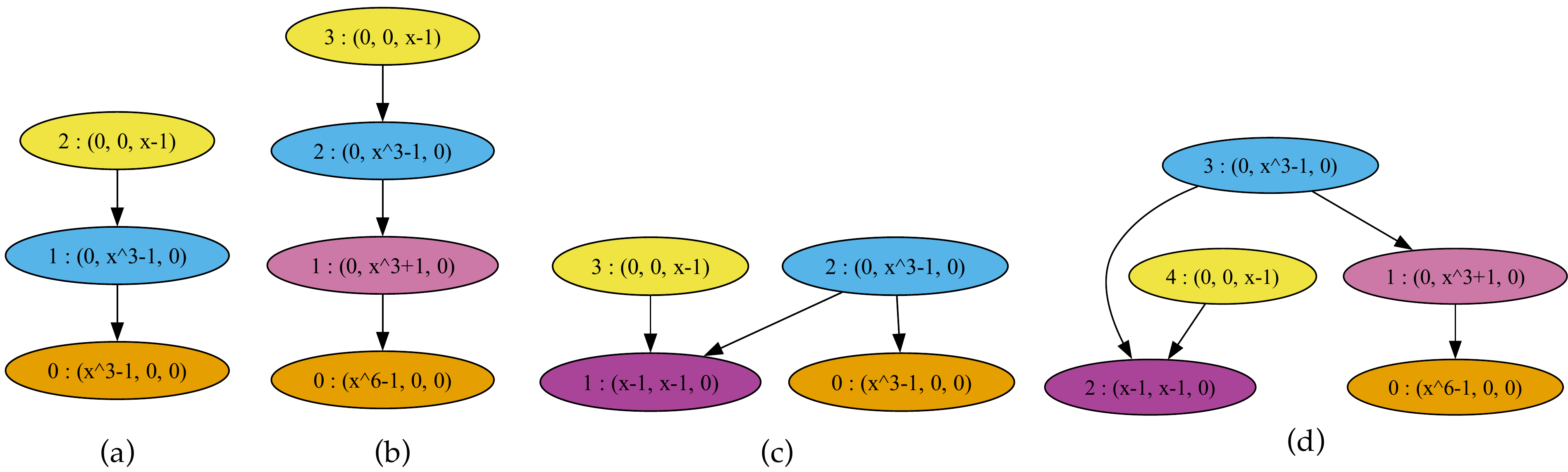}
    \caption{The Hasse diagrams of the Morse graphs
    computed using neural network approximations to $f_\theta$ as defined by Equation~\ref{eq:Leslie}.
    }
    \label{fig:ML_approx}
\end{figure}

\subsubsection{Experiment 1}

The results presented in Figure~\ref{fig:ML_approx}(a) were obtained using  $\cD(10,16)$, $w = 32$, and $2^{20}$ rectangles.
In this case $\nu\colon \sMG \to \sMR_1$ of Theorem~\ref{thm:intro:main} is an embedding, but not an epimorphism.
This result is easily reproducible: in an experiment with ten trials fixing $w$, $n$, and the number of rectangles, but varying the initial weights of the network at the start of training, we observe 
the same Morse graph in every trial.

What $\nu$ does not identify is the Morse set associated with the unstable fixed point at the origin.
However, this is to be expected. 
Naively, one expects that for $G$ to learn that there is recurrence near the origin requires the existence of a data point in the rectangle containing the origin whose image is also in that rectangle.
A simple computation shows that the value of the unstable eigenvalue at the origin is approximately 24 and thus the area of the region containing such data points is roughly $1.6 \times 10^{-7}$ the area of $X$.
Therefore, to reliably obtain such a data point suggests the need for on the order of $10^{10}$ data points.

Here we appeal to the aphorism ``perfect is the enemy of the good.'' Theorem~\ref{thm:intro:main} can be achieved but at an unreasonable cost compared to the fact that the  interesting complex dynamics can be obtained with relatively few data points.
It is also worth noting that the difficulty of identifying the fixed point at the origin is due to the fact that it is unstable \emph{and} lies on the boundary of $X$.
The Morse set associated with node 3 in Figure~\ref{fig:baseline}(a) is more unstable, but as indicated by node 2 in Figure~\ref{fig:ftheta}(a) is easily detected.

\subsubsection{Experiment 2}

In an experiment with ten trials, choosing $\cD(10,32)$, $w = 32$, and $2^{21}$ rectangles, in six trials we computed the Morse graph of Figure~\ref{fig:ML_approx}(b). In the remaining four trials, we again observe the Morse graph of Figure~\ref{fig:ML_approx}(a).
The additional spatial refinement allows for the identification of node 0 with Conley index of a stable period-6 orbit.
In the case of the refined Morse graph the lattice morphism $\nu\colon \sMG \to \sMR_1$ of Theorem~\ref{thm:intro:main} maps nodes 0 and 1 of $\sMG$ to the node 0 of $\sMR_1$.

\subsubsection{Experiment 3}

In an experiment with ten trials, choosing $\cD(20,128)$, $w = 256$, and $2^{23}$ rectangles, we computed the Morse graphs shown in Figure~\ref{fig:ML_approx}(a), (b), (c) and (d), in five, two, one, and two trials, respectively.

As expected the Morse set associated with the unstable fixed point at the origin is still not detected.
This example allows us to highlight that the morphism $\nu$ of Theorem~\ref{thm:intro:main} is dependent on the choice of Morse representation.
In particular, $\nu\colon \sMG \to \sMR_1$ is well defined for all ten trials.
However, for the Morse graph shown in Figure~\ref{fig:ML_approx}(d) we obtain $\nu\colon \sMG \to \sMR_2$ is an embedding. 
It is worth noting that the embedding is obtained even though the data driven computations were done using $2^{23}$ rectangles as opposed to the $2^{27}$ rectangles used to establish $\sMR_2$.

\section{Data and Code Availability}
The data and code used to perform the computations are available at \cite{RCDML}.

\bibliographystyle{siamplain}
\bibliography{references.bib}
\end{document}